\documentclass[12pt]{amsart}
\usepackage{amsmath}
\usepackage{amsfonts}
\usepackage{amssymb}

\newcommand\eps{\varepsilon}
\newcommand\R{{\mathbb{R}}}

\renewcommand\S{{\mathbf{S}}}

\setlength{\hoffset}{-.39cm}
\setlength{\voffset}{0.2in}
\topmargin -.35in
\oddsidemargin  0.2in
\evensidemargin 0.2in
\textheight 8.8in
\textwidth 6.41in
\setlength{\leftmargin}{2.0cm}
\clubpenalty=10000
\widowpenalty=10000

\parindent = 0 pt
\parskip = 12 pt

\theoremstyle{plain}
  \newtheorem{theorem}[subsection]{Theorem}
  
  \newtheorem{proposition}[subsection]{Proposition}
  \newtheorem{lemma}[subsection]{Lemma}

\theoremstyle{remark}
  \newtheorem{remark}[subsection]{Remark}

\theoremstyle{definition}
  \newtheorem{definition}[subsection]{Definition}

\begin{document}

\title[]{On existence of extremizers for the Tomas-Stein inequality for $S^1$}
\author{Shuanglin Shao}
\address{Department of Mathematics, University of Kansas, Lawrence, KS 66045}
\email{slshao@ku.edu}

\vspace{-0.1in}
\begin{abstract}The Tomas-Stein inequality or the adjoint Fourier restriction inequality for the sphere $S^1$ states that the mapping $f\mapsto \widehat{f\sigma}$ is bounded from $L^2(S^1)$ to $L^6(\R^2)$. We prove that there exists an extremizer for this inequality. We also prove that any extremizer satisfies $|f(-x)|=|f(x)|$ for almost every $x\in S^1$.
\end{abstract}

\maketitle

\section{Introduction}
The Tomas-Stein inequality or the adjoint Fourier restriction inequality for the sphere $S^1$ asserts that
\begin{equation}\label{eq-1}
\|\widehat{f\sigma}\|_{L^6(\R^2)} \le \mathcal{R}\|f\|_{L^2(S^1,\sigma)}
\end{equation} where the constant $\mathcal{R}>0$ is defined to be the optimal constant
\begin{equation}\label{eq-2}
 \mathcal{R}:=\sup\{\|\widehat{f\sigma}\|_{L^6(\R^2)}:\|f\|_{L^2(S^1,\sigma)}=
1\},
\end{equation} and $\sigma$ denotes the surface measure on the unit sphere $S^1$, and the Fourier transform is defined by
\begin{equation}\label{eq-3}
\widehat{f}(\xi):=\int e^{-i\xi\cdot x}f(x)dx.
\end{equation}
\begin{definition}
A function $f\in L^2(S^1)$ is said to be an extremizer or an extremal for \eqref{eq-1} if $f\neq 0$ a. e., and
\begin{equation}\label{eq-4}
\|\widehat{f\sigma}\|_{L^6(\R^2)} = \mathcal{R}\|f\|_{L^2(S^1)}.
\end{equation}
An extremizing sequence for the inequality \eqref{eq-1} is a sequence $\{f_\nu\}\in L^2(S^1)$ satisfies  $\|f_\nu\|_{L^2(S^1)}=1$ and $\lim_{\nu\to\infty }\|\widehat{f_\nu\sigma}\|_{L^6(\R^2)} =\mathcal{R}$.
An extremizing sequence is said to be precompact if any subsequence has a sub-subsequence which is Cauchy in $L^2(S^1)$.
\end{definition}
This paper is devoted to studying the existence of extremals for this basic inequality and to characterizing some properties of extremizers. The main result is the following 

\begin{theorem}\label{thm-main} There exists an extremal function for \eqref{eq-1}.  
\end{theorem}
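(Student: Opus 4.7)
My plan is a concentration--compactness argument. Take an extremizing sequence $\{f_\nu\}\subset L^2(S^1)$ with $\|f_\nu\|_{L^2(S^1)}=1$ and $\|\widehat{f_\nu\sigma}\|_{L^6(\R^2)}\to \mathcal{R}$. The inequality~\eqref{eq-1} is invariant under rotations of $S^1$ (which induce rotations of the extension in $\R^2$) and under modulations $f(x)\mapsto e^{i\xi_0\cdot x}f(x)$ with $\xi_0\in\R^2$ (which translate the extension). I plan to use these symmetry families to normalize $\{f_\nu\}$, extract a weak limit $f$ in $L^2(S^1)$, and show that either $f$ itself is the desired extremizer or the sequence concentrates on arcs shrinking to a point---an alternative then excluded by a strict comparison between $\mathcal{R}$ and the sharp $L^2(\R)\to L^6(\R^2)$ constant for the tangent parabola.

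After symmetry normalization, pass to a subsequence so $f_\nu\rightharpoonup f$ in $L^2(S^1)$. For each fixed $\xi\in\R^2$, the function $x\mapsto e^{-i\xi\cdot x}$ lies in $L^2(S^1)$, so $\widehat{f_\nu\sigma}(\xi)\to\widehat{f\sigma}(\xi)$ pointwise; combined with the uniform $L^6$-bound on the extensions, the Brezis--Lieb lemma yields
\begin{equation*}
\|\widehat{f_\nu\sigma}\|_{L^6}^6 = \|\widehat{f\sigma}\|_{L^6}^6 + \|\widehat{(f_\nu-f)\sigma}\|_{L^6}^6 + o(1).
\end{equation*}
Setting $a=\|f\|_{L^2}^2$ and $b=\lim_\nu\|f_\nu-f\|_{L^2}^2$, weak convergence gives $a+b=1$. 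Applying \eqref{eq-1} to each summand and using the identity $a^3+b^3=1-3ab$ when $a+b=1$ forces $\mathcal{R}^6\le \mathcal{R}^6(1-3ab)$, hence $ab=0$; if $a=1$, then $f$ is the required extremizer.

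To rule out the alternative $f\equiv 0$, note that after normalizing modulations and rotations, the only remaining mechanism producing a weakly null extremizing sequence is concentration of $|f_\nu|^2$ on arcs of $S^1$ of length $\eps_\nu\to 0$ about some point $p$. Parametrizing by arclength and setting $g_\nu(t):=\eps_\nu^{1/2}f_\nu(p+\eps_\nu t)$, one has $\|g_\nu\|_{L^2(\R)}\le 1$; a second-order Taylor expansion of the phase $e^{-i\xi\cdot\gamma(\eps_\nu t)}$ together with the change of variables $\eta_1=\eps_\nu\xi_2,\ \eta_2=-\eps_\nu^2\xi_1/2$ shows that $\widehat{f_\nu\sigma}$ is asymptotically, up to a unimodular factor, the $L^2(\R)\to L^6(\R^2)$ extension operator for the tangent parabola $\{(t,t^2/2):t\in\R\}$ applied to $g_\nu$. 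In the limit this forces $\mathcal{R}\le \mathcal{P}$, where $\mathcal{P}$ denotes the sharp parabolic constant. The contradiction---and thus the theorem---will follow from the strict inequality $\mathcal{R}>\mathcal{P}$, which I would establish by testing \eqref{eq-1} against the constant function $f\equiv(2\pi)^{-1/2}$ to obtain the explicit lower bound $\mathcal{R}^6\ge (2\pi)^{4}\int_0^\infty J_0(r)^6\,r\,dr$ (using $\widehat{\sigma}(\xi)=2\pi J_0(|\xi|)$), and comparing numerically against an upper bound for $\mathcal{P}^6$.

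The hard part is this strict comparison of sharp constants: $\mathcal{R}$ and $\mathcal{P}$ arise from distinct extremal problems with no obvious structural relation, so the argument must rest on explicit numerical bounds---the Bessel integral on the $S^1$ side, and the best available estimate for $\mathcal{P}$ on the parabolic side. Everything else is a careful but largely standard implementation of the Brezis--Lieb and concentration--compactness machinery.
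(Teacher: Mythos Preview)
Your outline has a genuine gap at the concentration step. You assert that, after normalizing by rotations and modulations, the only way for an extremizing sequence to converge weakly to $0$ is concentration of $|f_\nu|^2$ on a single shrinking arc about some point $p$. This is not true, and it is precisely the crux of the problem. Concentration at a \emph{pair of antipodal points} $\{p,-p\}$ is equally compatible with a weakly null sequence, and it produces a strictly larger limiting value of the functional than single-point concentration. Concretely, if $g_\nu$ concentrates at $p$ with $\|g_\nu\|_2^2=\tfrac12$ and $h_\nu(x)=g_\nu(-x)$, then the convolution identity $\langle f_1\sigma*f_2\sigma*f_3\sigma,\,f_4\sigma*f_5\sigma*f_6\sigma\rangle=\langle f_1\sigma*f_2\sigma*\tilde f_4\sigma,\,\tilde f_3\sigma*f_5\sigma*f_6\sigma\rangle$ (with $\tilde f(x)=f(-x)$) forces the twenty cross terms in $\|(g_\nu+h_\nu)\sigma*(g_\nu+h_\nu)\sigma*(g_\nu+h_\nu)\sigma\|_2^2$ to coincide with the pure $g_\nu$ term, giving the asymptotic ratio
\[
\frac{\|(g_\nu+h_\nu)\sigma*(g_\nu+h_\nu)\sigma*(g_\nu+h_\nu)\sigma\|_2^2}{\|g_\nu+h_\nu\|_2^6}\;\longrightarrow\;\tfrac{5}{2}\,\mathbf{P}^6,
\]
not $\mathbf{P}^6$. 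Hence the correct obstruction to rule out is $\mathcal{R}^6\le \tfrac{5}{2}\,\mathcal{R}_P^6$, and the strict inequality you need is $\mathcal{R}>(5/2)^{1/6}\mathcal{R}_P$, not merely $\mathcal{R}>\mathcal{R}_P$. With only the weaker comparison, antipodal concentration remains a live alternative and your dichotomy does not close.

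The paper confronts this head-on: it first shows (Proposition~\ref{prop-even-arrangement}) that antipodal symmetrization $f\mapsto f_\star$ never decreases the functional, so one may restrict to even nonnegative extremizing sequences; for these the natural concentration scenario is exactly the antipodal pair, and the required strict comparison $\mathbf{S}>(5/2)^{1/6}\mathbf{P}$ is obtained not by numerics but by a perturbation computation (Appendix~\ref{sec:strict-comparison}) showing that transplanting the Gaussian parabolic extremizer onto a small cap of $S^1$ yields a functional value with strictly positive derivative in the cap radius. Your proposed route---evaluating $\int_0^\infty J_0(r)^6\,r\,dr$ for the constant test function and comparing to the known Foschi constant---could in principle replace this, but you would have to check it beats $(5/2)\mathcal{R}_P^6$, not just $\mathcal{R}_P^6$; you do not mention this.

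A secondary point: even granting single-point concentration, the passage from ``$f_\nu\rightharpoonup 0$ after normalization'' to ``$|f_\nu|^2$ concentrates on a shrinking arc'' is not automatic and is where most of the paper's work lies. One needs a refined Tomas--Stein inequality (Lemma~\ref{le-refinement-of-Tomas-Stein}) to locate a cap carrying a definite fraction of the $L^2$ mass with controlled $L^\infty$ bound, an iterated decomposition to show the remaining caps cluster near it (Lemmas~\ref{le-distant-interaction} and~\ref{le-6}), and then a genuine profile decomposition (Propositions~\ref{prop-decomp} and~\ref{prop-ortho}) to pass to the parabolic limit. Your Brezis--Lieb splitting extracts only a single weak-limit profile and gives no mechanism for choosing the normalizing modulation $\xi_0$ in the first place; both of these require the refined inequality as input.
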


Moreover we show that the extremizers enjoy the following symmetry.
\begin{theorem}\label{thm-even}
Every extremizer satisfies $|f(-x)|=|f(x)|$ for almost every $x\in S^1$.
\end{theorem}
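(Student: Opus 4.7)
My plan is to reduce the problem to real non-negative extremizers via an absolute-value inequality, and then exploit the parity structure of the antipodal decomposition together with the Tomas--Stein bound applied component-wise.

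First, I would establish that $\|\widehat{f\sigma}\|_6\le \|\widehat{|f|\sigma}\|_6$ for every $f\in L^2(S^1)$. By Plancherel, $\|\widehat{f\sigma}\|_6^6 = (2\pi)^2\|(f\sigma)^{*3}\|_2^2$, and expanding the $L^2$ norm yields
\[
\|\widehat{f\sigma}\|_6^6 = (2\pi)^2\iiint\!\!\iiint f(x_1)f(x_2)f(x_3)\overline{f(y_1)f(y_2)f(y_3)}\,\delta\!\left(\sum_{i} x_i-\sum_{j} y_j\right)d\sigma^6,
\]
where the constraint measure is non-negative. Pulling absolute values inside the integrand gives the desired inequality. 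Since $\||f|\|_2 = \|f\|_2$, if $f$ is an extremizer then so is $g := |f|$, and because the desired conclusion $|f(-x)| = |f(x)|$ is exactly the antipodal symmetry of $g$, it suffices to prove the symmetry for every real non-negative extremizer.

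For such a $g$, I would decompose $g = g_e + g_o$ with $g_e(x) := (g(x)+g(-x))/2$ and $g_o := g - g_e$; these are real, $g_e\ge 0$, and $\|g\|_2^2 = \|g_e\|_2^2 + \|g_o\|_2^2$. A short parity computation using the symmetry of $\sigma$ shows that $A := \widehat{g_e\sigma}$ is real and $\xi$-even, while $\widehat{g_o\sigma} = iB'$ with $B'$ real and $\xi$-odd, so the cross term in $|A + iB'|^2$ vanishes and
\[
\|\widehat{g\sigma}\|_6^6 = \int(A^2+B'^2)^3\,d\xi = \int A^6 + 3A^4B'^2 + 3A^2B'^4 + B'^6\,d\xi.
\]
Tomas--Stein on $g_e$ and $g_o$ gives $\int A^6\le \mathcal{R}^6\|g_e\|_2^6$ and $\int B'^6\le \mathcal{R}^6\|g_o\|_2^6$, and H\"older's inequality with exponents $(3/2, 3)$ applied to the two cross integrals and combined with these bounds yields $\int A^4B'^2 \le \mathcal{R}^6\|g_e\|_2^4\|g_o\|_2^2$ and $\int A^2B'^4\le \mathcal{R}^6\|g_e\|_2^2\|g_o\|_2^4$. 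Summing the four bounds recovers $\|\widehat{g\sigma}\|_6^6 \le \mathcal{R}^6\|g\|_2^6$, and the extremality of $g$ forces equality in each bound.

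Finally I would extract the conclusion from the equality conditions. Equality in H\"older forces $|A|$ and $|B'|$ to be proportional a.e.\ (unless one vanishes identically); since both are continuous and $B'(0) = 0$ (because $\int g_o\,d\sigma = 0$ by antipodal antisymmetry), proportionality would force $A(0) = \int g_e\,d\sigma = 0$, and hence $g_e\equiv 0$ as $g_e\ge 0$. But then $g = g_o$ is antipodally antisymmetric and non-negative, which forces $g\equiv 0$ and contradicts extremality. Therefore $B'\equiv 0$, so $g$ is antipodally symmetric and $|f(-x)| = |f(x)|$ a.e. The main obstacle will be securing the clean expression $|\widehat{g\sigma}|^2 = A^2 + B'^2$ with no cross term; this relies on both $g$ being real and $g_e, g_o$ having the correct antipodal parity, which is why the preliminary reduction to $g = |f|$ is essential.
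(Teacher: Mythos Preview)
Your argument is correct and takes a genuinely different route from the paper's. The paper proves the stronger Proposition~2.4: for nonnegative $f$ the functional $\|f\sigma*f\sigma*f\sigma\|_2$ does not decrease under the nonlinear symmetrization $f_\star=\sqrt{(f(x)^2+f(-x)^2)/2}$, with equality forcing $f=f_\star$. Its proof expands the sixfold integral over orbits of a symmetry group of order $2\cdot 6!$ acting on $6$-tuples with $a_1+a_2+a_3=a_4+a_5+a_6$, and then reduces to a calculus maximization of a trigonometric function $\Gamma(\theta_1,\ldots,\theta_6)$ over $[0,\pi/2]^6$, showing $\max\Gamma=5/2$ is attained only at $(\pi/4,\ldots,\pi/4)$.

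Your approach is more elementary: after reducing to nonnegative $g=|f|$, you use the \emph{linear} even/odd splitting $g=g_e+g_o$ and the key algebraic fact that, for real $g$, $\widehat{g_e\sigma}$ is real while $\widehat{g_o\sigma}$ is purely imaginary, so $|\widehat{g\sigma}|^2=A^2+B'^2$ has no cross term. The binomial expansion together with H\"older and Tomas--Stein applied componentwise then recovers the sharp inequality, and the equality case is dispatched by continuity of $A,B'$ and the single evaluation $B'(0)=0$. This avoids all orbit combinatorics and optimization; the trade-off is that you prove only the symmetry of extremizers, not the monotonicity of the functional under the paper's $f_\star$-symmetrization. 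The paper in fact notes (after Proposition~2.4) that Foschi later gave a short Cauchy--Schwarz proof in the same spirit as yours. One small point worth making explicit in your write-up: the case split should be handled before invoking H\"older equality --- if $g_o\equiv 0$ you are done, if $g_e\equiv 0$ then $g=g_o$ is odd and nonnegative hence zero, and only when both are nonzero do you get genuine proportionality $|A|=c|B'|$ with $c\in(0,\infty)$ to contradict $A(0)=\int g_e\,d\sigma>0$.
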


In \cite{Christ-Shao:extremal-for-sphere-restriction-I-existence}, for the adjoint Fourier restriction inequality for the sphere $S^2$, we prove the existence of extremals by showing that any extremizing sequence of nonnegative functions is precompact in $L^2(S^2)$; and the extremals satisfy $|f(-x)|=|f(x)|$ for almost every $x\in S^2$. We are also able to prove that constants are local extremals.  Furthermore in \cite{Christ-Shao:extremal-for-sphere-restriction-II-characterizations}, we show that nonnegative extremizers are indeed smooth, and completely characterize complex extremals: Any complex extremizer is of the form $e^{ix\xi}f(x)$ for some nonnegative extremizer $f$ and some $\xi\in \mathbb{R}^3$, and if $\{f_\nu\}$ is a complex extremizing sequence then there exists $\{\xi_\nu\}$ such that $\{e^{-ix\cdot\xi_\nu}f_\nu\}$ is precompact. Recently, in \cite{Foschi:2015:maximizers-2D-sphere}, Foschi proves that constant functions are extremizers for the two dimensional sphere. In \cite{Fanelli-Vega-Visciglia:2011:maximizers-restriction}, Fanelli, Vega and Visciglia consider similar questions and establish existence for a family of non-endpoint Fourier restriction operators. 

In the context of the adjoint Fourier restriction inequality for the paraboloid (or the Strichartz inequality for the Schr\"odinger equation), Kunze \cite{Kunze:2003:maxi-strichartz-1d} proves the existence of extremals when the spatial dimension is one by a concentration-compactness argument. Foschi \cite{Foschi:2007:maxi-strichartz-2d} proves that Gaussian functions are explicit extremals in spatial dimensions one and two by two successive applications of the Cauchy-Schwarz inequalities; independently Hundertmark and Zharnitsky \cite{Hundertmark-Zharnitsky:2006:maximizers-Strichartz-low-dimensions} obtain similar results. Bennett, Bez, Carbery and Hundertmark \cite{Bennett-Bez-Carbery-Hundertmark:2008:heat-flow-of-strichartz-norm} show that Gaussians are extremizers from the perspective of the heat-flow deformation method \cite{Bennett-Carbery-Christ-Tao-2008-Brascamp-Lieb-inequality, Bennett-Carbery-Tao:2006:multilinear-restri-kakeya}. For non-$L^2$ adjoint Fourier restriction inequality for paraboloids, Christ and Quilodr\'an \cite{Christ-Quilodran:Gaussians-rarely-extremize-non-L2-restriction-paraboloids} show that Gaussians are rarely extremizers by studying the corresponding Euler-Lagrange equations. In higher dimensions, the existence result of extremizers is known, which is achieved by using the tool of profile decompositions by the author \cite{Shao:2008:maximizers-Strichartz-Sobolev-Strichartz}. In the context of a convolution inequality with the surface measures of the paraboloids, the existence of quasi-extremals and extremals was studied by Christ \cite{Christ:extremizer-radon-like-transform, Christ:quasiextremizer-radon-like-transform}.

The main results Theorem \ref{thm-main} and Theorem \ref{thm-even} are proven by following the framework designed in the paper \cite{Christ-Shao:extremal-for-sphere-restriction-I-existence}. To be more precise, the first part of the analysis in this paper, Step 1 to Step 4, follows similarly as in \cite{Christ-Shao:extremal-for-sphere-restriction-I-existence} to obtain a nonnegative extremizing sequence $f_\nu$ which is ``even upper normalized" with respect to a sequence of caps $\mathcal{C}_\nu$. In the second part of the analysis, i.e., at the last Step 5, we develop a profile decomposition for the adjoint sphere restriction operator in the spirit of \cite{Bahouri-Gerard:1999:profile-wave, Carles-Keraani:2007:profile-schrod-1d}. When $f$ is supported on sufficiently small caps on the sphere, we approximate $\widehat{f\sigma}$ by linear Schr\"odinger waves. This idea appeared previously in \cite{Christ-Colliander-Tao:2003:asymptotics-modulation-canonical-defocusing-eqs} where the authors approximate the Airy wave at high frequency by a Schr\"odinger wave.   

The analysis in this paper can be viewed as a manifestation of the the concentration-compactness approach developed in a series of works by Lions \cite{Lions-1984-cc-locally-compact-I, Lions-1984-cc-locally-compact-II, Lions-1985-cc-limit-case-I, Lions-1985-cc-limit-case-II}, which is however adapted to our case to cope with the nonlocal characteristics of the adjoint Fourier restriction operator.

We present the outline and results in detail in Section \ref{sec:notation-outline}.

\textbf{Acknowledgements.}  The research of the author was supported by NSF DMS-1160981.

\section{Outline of the proof and definitions}\label{sec:notation-outline}
This section consists of notations, definitions and statements of some intermediate results which are not repeated subsequently. We start with several definitions. Let $\sigma_P$ be the canonical measure on the parabola $P=\{(x,y)\in \R^2:y=\frac 12 |x|^2\}$, and set
\begin{align}
\label{eq-75}\mathcal{R}_P&:=\sup\frac {\|\widehat{f\sigma_P}\|_{L^6(\R^2)}}{\|f\|_{L^2(P, \sigma_P)}},\\
\mathbf{S}&:=\sup \frac {\|f\sigma\ast f\sigma\ast f\sigma\|^{1/3}_{L^2(\R^3)}}{\|f\|_{L^2(S^1, \sigma)}}\\
\mathbf{P}&:=\sup \frac {\|f\sigma_P \ast f\sigma_P \ast f\sigma_P\|^{1/3}_{L^2(\R^3)}}{\|f\|_{L^2(P, \sigma_P)}}.
\end{align} Note that by Plancherel's theorem, $\mathcal{R}=2\pi \S, \, \mathcal{R}_P =2\pi \mathbf{P}.$ There holds that $|f\sigma * f\sigma *f\sigma | \le |f|\sigma* |f|\sigma *|f|\sigma$. If $f$ is an extremizer to \eqref{eq-1}, so if $|f|$. This applies to any extremizing sequence $\{f_\nu\}$. Thus in order to prove the existence of extermizers, we will restrict our attention to nonnegative functions and nonnegative extremizing sequences. 

\textbf{Step 1, A strict comparison.}  By a dilation argument, we see that the sharp constants for the adjoint Fourier restriction inequalities for the sphere and the paraboloid satisfy, $\mathcal{R}\ge \mathcal{R}_P$, where $\mathcal{R}_P$ is defined in \eqref{eq-75}. This reasoning appears previously in \cite{Christ-Shao:extremal-for-sphere-restriction-I-existence}.  Indeed, we take an extremizer for the paraboloid, which is known as Gaussians from  \cite{Foschi:2007:maxi-strichartz-2d} and \cite{Hundertmark-Zharnitsky:2006:maximizers-Strichartz-low-dimensions}, dilate it so that it is essentially supported in a sufficiently small set of the paraboloid; we paste the extremizer onto the sphere in an obvious way and then osculate the sphere by the parabolic scaling $(x',x_d)\to (\lambda x',\lambda^2 x_d)$ where $x=(x',x_d)\in \R^d$ and $\lambda>0$. In the limits, we see that the relation $\mathcal{R}\ge \mathcal{R}_P$ holds. So there arises the most severe obstruction to the existence of extremizers that, for an extremizing sequence $\{f_\nu\}$ satisfying $\|f_\nu\|_2=1$, any subsequential weak limit of $|f_\nu|^2$ could conceivably converge to a Dirac mass at a point of $S^1$. If it were the case, then $\mathcal{R}=\mathcal{R}_P$. To rule out this scenery, an essential step is to prove $\mathcal{R}>\mathcal{R}_P$. Because any extremal enjoys a symmetry $|f(x)|=|f(-x)|$, there is a possibility that the extremizing sequence might converge weakly to a linear combination of two Dirac masses at antipodal points of $S^1$. To rule it out, one needs a strict comparison $\mathbf{S}>(5/2)^{1/6}\mathbf{P}$, which is achieved by using a perturbation argument, which we sketch in Appendix \ref{sec:strict-comparison}.
\begin{proposition}\label{prop-strict-inequ}
\begin{equation}
\mathcal{R}>(5/2)^{1/6}\mathcal{R}_P.
\end{equation}
\end{proposition}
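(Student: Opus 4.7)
The plan is to construct a test function consisting of two tight Gaussian caps at antipodal points of $S^1$, compute the limiting Tomas-Stein ratio to obtain the non-strict bound $\mathcal{R}\ge(5/2)^{1/6}\mathcal{R}_P$, and then exploit the sphere's fourth-order departure from its osculating paraboloid to produce a strict gain. Let $g$ be a Gaussian extremizer for the paraboloid inequality \eqref{eq-75}, as in \cite{Foschi:2007:maxi-strichartz-2d,Hundertmark-Zharnitsky:2006:maximizers-Strichartz-low-dimensions}. For a small parameter $\delta>0$, let $g_+^\delta$ denote the parabolic rescaling of $g$ pasted onto a $\delta$-sized cap of $S^1$ centered at $e_1$, and set $g_-^\delta(x):=g_+^\delta(-x)$, supported in the antipodal cap. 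The test function is $f_\delta:=g_+^\delta+g_-^\delta$; by disjointness of supports $\|f_\delta\|_{L^2}^2=2\|g_+^\delta\|_{L^2}^2$.

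The antipodal symmetry $f_\delta(-x)=f_\delta(x)$ forces $\widehat{f_\delta\sigma}$ to be real, equal to $2\operatorname{Re}u$ with $u:=\widehat{g_+^\delta\sigma}$. Writing $u(\xi)=e^{-i\xi\cdot e_1}v_\delta(\xi)$, the binomial expansion yields
\begin{equation*}
\widehat{f_\delta\sigma}(\xi)^6=(u+\bar u)^6=\sum_{k=0}^{6}\binom{6}{k}e^{-i(2k-6)\xi\cdot e_1}\,v_\delta(\xi)^k\,\overline{v_\delta(\xi)}^{\,6-k}.
\end{equation*}
Under the parabolic rescaling $\eta_1=\delta^2\xi_1$, $\eta_2=\delta\xi_2$, one has $v_\delta(\xi)=\delta^{1/2}\widetilde v_\delta(\eta)$ with $\widetilde v_\delta\to\widehat{g\sigma_P}$ in the Schwartz topology as $\delta\to 0$. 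For $k\ne 3$, the $\xi$-integral becomes, in $\eta$-variables, the Fourier transform of the Schwartz function $\widetilde v_\delta^k\overline{\widetilde v_\delta}^{\,6-k}$ at the frequency $(6-2k)\delta^{-2}$, so it decays faster than any power of $\delta$. The surviving $k=3$ term contributes $\binom{6}{3}=20$ copies of $\|u\|_{L^6}^6$, and $\|u\|_{L^6}^6/\|g_+^\delta\|_{L^2}^6\to\mathcal{R}_P^6$. Therefore
\begin{equation*}
\frac{\|\widehat{f_\delta\sigma}\|_{L^6}^6}{\|f_\delta\|_{L^2}^6}=\frac{20\,\mathcal{R}_P^6\,\|g\|^6\,(1+o(1))}{2^3\,\|g\|^6}=\frac{5}{2}\mathcal{R}_P^6+o(1),
\end{equation*}
which yields the non-strict bound $\mathcal{R}\ge(5/2)^{1/6}\mathcal{R}_P$.

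To upgrade to strict inequality I track the leading $O(\delta^2)$ correction. The sphere's phase $\xi\cdot p_+(\theta)=\xi_1\cos\theta+\xi_2\sin\theta$ differs from the osculating paraboloid phase $\xi_1(1-\theta^2/2)+\xi_2\theta$ by $\xi_1\theta^4/24-\xi_2\theta^3/6+O(\theta^5)$, producing $\widetilde v_\delta=\widehat{g\sigma_P}+\delta^2 w+O(\delta^4)$, with $w$ an explicit linear combination of $\eta_j$-multiples of $\eta_2$-derivatives of $\widehat{g\sigma_P}$. Expanding
\begin{equation*}
\|\widetilde v_\delta\|_{L^6}^6=\mathcal{R}_P^6\,\|g\|_{L^2}^6+6\delta^2\operatorname{Re}\!\int|\widehat{g\sigma_P}|^4\,\overline{\widehat{g\sigma_P}}\,w\,d\eta+O(\delta^4),
\end{equation*}
integrating by parts in $\eta_2$ to transfer derivatives onto $|\widehat{g\sigma_P}|^4\overline{\widehat{g\sigma_P}}$, and invoking the Euler-Lagrange equation for $g$ as a paraboloid extremizer reduce the correction to a concrete one-dimensional Gaussian moment. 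The principal difficulty is verifying that this moment has the required positive sign; once this is confirmed, for small enough $\delta>0$ one obtains $\|\widehat{f_\delta\sigma}\|_{L^6}/\|f_\delta\|_{L^2}>(5/2)^{1/6}\mathcal{R}_P$, establishing Proposition \ref{prop-strict-inequ}.
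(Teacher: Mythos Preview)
Your strategy matches the paper's exactly: a Gaussian cap plus its antipodal reflection, the binomial/convolution count giving the factor $20/8=5/2$ in the limit, and then a first-order perturbation in the cap scale exploiting the quartic deviation of the sphere from its osculating paraboloid. So the architecture is right.

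The genuine gap is that you do not carry out the sign computation --- you explicitly write ``The principal difficulty is verifying that this moment has the required positive sign; once this is confirmed\ldots''. But this verification \emph{is} the proof. The paper performs it in full: with $\Psi(\eps)=\log\bigl(\|\widehat{f_\eps\sigma}\|_6^6/\|f_\eps\|_2^6\bigr)$ for the single-cap trial function, one finds by explicit Gaussian integration
\[
\partial_\eps\big|_{\eps=0}\Psi(\eps)=\frac{\partial_\eps|_{\eps=0}\|w_\eps\|_6^6}{\|w_0\|_6^6}-3\,\frac{\partial_\eps|_{\eps=0}\|g_\eps\|_2^2}{\|g_0\|_2^2}=\frac{7}{8}-\frac{3}{16}=\frac{11}{16}>0.
\]
Your sketch invokes integration by parts and the Euler--Lagrange equation in place of this computation, but neither of those shortcuts determines the sign; one really has to evaluate the Gaussian moments.

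There is also a secondary omission: you track the $O(\delta^2)$ correction only in $\|\widetilde v_\delta\|_{L^6}^6$, but the denominator $\|g_+^\delta\|_{L^2(S^1)}^2$ also carries an $O(\delta^2)$ correction coming from $d\sigma=(1+\tfrac12|y|^2+O(|y|^4))\,dy$ and from the quartic term in the Gaussian exponent on the sphere. In the paper's bookkeeping this contributes the $-3/16$ above; without it the derivative computation is incomplete (and, as it happens, one would get $7/8$ rather than $11/16$, so the sign would still be right, but the argument would be numerically wrong).
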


\textbf{Step 2, Antipodal symmetrization.} We will show that ``extremals" to \eqref{eq-1} enjoy a symmetry $|f(-x)|=|f(x)|$.
\begin{definition}\label{def-even}
A complex-valued function $f\in L^2(S^1)$ is said to be even if $\overline{f(-x)}=f(x)$ for almost every $x\in S^1$. For nonnegative functions, this condition is simplified to $f(-x)=f(x)$.
\end{definition}

\begin{definition} Let $f$ be nonnegative $L^2(S^1)$ function. The antipodally symmetric rearrangement $f_\star$ is the unique non-negative element of $L^2(S^1)$ which satisfies
\begin{align}
f_\star(x)&=f_\star(-x),\text{ for all }x\in S^1,\\
f_\star(x)^2+f_\star(-x)^2&= f(x)^2+f(-x)^2, \text{ for all }x\in S^1.
\end{align}
In other words, $f_\star=\sqrt{\frac {f(x)^2+f(-x)^2}{2}}$ and $\|f_\star\|_2=\|f\|_2$.
\end{definition}
\begin{proposition}\label{prop-even-arrangement}For any nonnegative function $f\in L^2(S^1)$,
\begin{equation}\label{eq-5}
\|f\sigma\ast f\sigma\ast f\sigma\|_2 \le \|f_\star\sigma\ast f_\star \sigma\ast f_\star\sigma\|_2,
\end{equation}
with strict inequality if and only if $f=f_\star$ almost everywhere. Consequently any extremizer for the inequality \eqref{eq-1} satisfies $|f(-x)|=|f(x)|$ for almost every $x\in S^1$.
\end{proposition}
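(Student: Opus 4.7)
The plan is to reduce \eqref{eq-5} to an elementary pairwise Cauchy-Schwarz by writing both sides as six-fold integrals over $(S^1)^6$ against a delta-function constraint, and then identifying the right combinatorial averaging that absorbs the Cauchy-Schwarz losses exactly.

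First I would use
$$\|f\sigma \ast f\sigma \ast f\sigma\|_2^2 = \int_{(S^1)^6} \prod_{i=1}^6 f(x_i)\,\delta\Bigl(\sum_{i=1}^3 x_i - \sum_{i=4}^6 x_i\Bigr)d\sigma^{\otimes 6}$$
and apply the antipodal substitution $x_j \mapsto -x_j$ for $j=4,5,6$ (permissible since $\sigma$ is antipodally invariant) to convert this into $L := \int f(x_1)f(x_2)f(x_3)\tilde f(x_4)\tilde f(x_5)\tilde f(x_6)\,\delta(x_1+\cdots+x_6)\,d\sigma^{\otimes 6}$, where $\tilde f(x):=f(-x)$. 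The analogous identity for $f_\star$ reads $\|f_\star\sigma\ast f_\star\sigma\ast f_\star\sigma\|_2^2 = \int\prod f_\star(x_i)\,\delta(\sum x_i)\,d\sigma^{\otimes 6}$ since $f_\star=\tilde f_\star$. For each $S\subseteq\{1,\ldots,6\}$ I would set
$$L_S := \int \prod_{i\in S}f(x_i)\prod_{j\notin S}\tilde f(x_j)\,\delta\Bigl(\sum_{k=1}^6 x_k\Bigr)d\sigma^{\otimes 6};$$
by permutation symmetry of the measure and the constraint, $L_S$ depends only on $|S|$, so every $3$-element $S$ gives $L_S = L$.

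The heart of the argument is the pointwise Cauchy-Schwarz applied to the vectors $(f(x_i),\tilde f(x_i))$ and $(\tilde f(x_{i+3}),f(x_{i+3}))$ in $\R^2$: using $f(x)^2 + \tilde f(x)^2 = 2 f_\star(x)^2$, for each $i\in\{1,2,3\}$ this yields
$$f(x_i)\tilde f(x_{i+3}) + \tilde f(x_i) f(x_{i+3}) \le 2 f_\star(x_i)\, f_\star(x_{i+3}).$$
Multiplying these three inequalities produces $\prod_{i=1}^3[f(x_i)\tilde f(x_{i+3})+\tilde f(x_i)f(x_{i+3})]\le 8\prod_{k=1}^6 f_\star(x_k)$, and expanding the left side produces exactly $2^3=8$ monomials of the form $\prod_{i\in S}f(x_i)\prod_{j\notin S}\tilde f(x_j)$ with $S$ running over the eight ``transversal'' $3$-element subsets of $\{1,\ldots,6\}$ (those containing one element from each antipodal pair $\{i,i+3\}$, $i=1,2,3$). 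Integrating against $\delta(\sum x_i)d\sigma^{\otimes 6}$ and using $L_S=L$ for each such $S$, the left becomes $8L=8\|f\sigma\ast f\sigma\ast f\sigma\|_2^2$ while the right becomes $8\|f_\star\sigma\ast f_\star\sigma\ast f_\star\sigma\|_2^2$; division by $8$ yields \eqref{eq-5}.

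Equality throughout the pairwise Cauchy-Schwarz forces $f(x_i)f(x_{i+3}) = f(-x_i)f(-x_{i+3})$ a.e.\ on the constraint surface, and since for every $(x,y)\in S^1\times S^1$ one can find completions $x_2,x_3,x_5,x_6\in S^1$ with $x+y+x_2+x_3+x_5+x_6=0$, this upgrades to $f(x)f(y) = f(-x)f(-y)$ for a.e.\ $(x,y)\in S^1\times S^1$, forcing $f(x)/f(-x)$ to be a.e.\ constant; nonnegativity together with $\|f\|_2 = \|\tilde f\|_2$ pins the constant to $1$, so $f = f_\star$ a.e. For the consequence on extremizers: any extremizer $f$ yields a nonnegative extremizer $|f|$ (via $|(f\sigma)^{\ast 3}|\le(|f|\sigma)^{\ast 3}$ pointwise and $\||f|\|_2 = \|f\|_2$); applying \eqref{eq-5} to $|f|$ shows $|f|_\star$ is also extremal with the same $L^2$ norm, and the equality case then forces $|f|=|f|_\star$, i.e.\ $|f(x)|=|f(-x)|$ a.e.\ on $S^1$. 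I expect the main conceptual obstacle to be recognising the eight transversal subsets as the ones whose integrands recombine into a product of three binomials amenable to the antipodal-pair Cauchy-Schwarz; once this combinatorial alignment is spotted, every remaining step is elementary.
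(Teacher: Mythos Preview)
Your argument is correct and takes a genuinely different route from the paper's. The paper works with the full symmetry group $G$ of order $2\cdot 6!$ acting on six-tuples satisfying $a_1+a_2+a_3=a_4+a_5+a_6$, reduces to orbit sums, parametrizes via $f(a_i)=\cos\theta_i$, $f(-a_i)=\sin\theta_i$, and then performs a direct critical-point analysis of the resulting trigonometric expression $\Gamma(\theta_1,\dots,\theta_6)$ to show $\max\Gamma=5/2$, attained only at $\theta_i=\pi/4$. Your route---flip $x_4,x_5,x_6$ to turn the constraint into $\sum_{1}^{6}x_i=0$, observe that $L_S$ depends only on $|S|$, and apply pairwise Cauchy--Schwarz across the three pairs $\{i,i+3\}$ so that the eight transversal monomials average back to $8L$---is essentially the short proof the paper itself attributes to Foschi. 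It is more conceptual, avoids calculus entirely, and makes the role of the antipodal pairing transparent. The price appears in the equality analysis: the paper's pointwise condition $\theta_i=\pi/4$ directly yields $f(a_i)=f(-a_i)$, whereas your Cauchy--Schwarz equality gives only $f(x_i)f(x_{i+3})=f(-x_i)f(-x_{i+3})$, and only on the portion of the constraint surface where \emph{all} $f_\star(x_k)>0$ (otherwise the product inequality is $0\le 0$ and says nothing about the individual factors). Upgrading this to $f(x)f(y)=f(-x)f(-y)$ for a.e.\ $(x,y)\in S^1\times S^1$ therefore needs the assertion that for a.e.\ $(x_1,x_4)$ in the positivity set $P=\{f_\star>0\}$ there is a positive-measure family of completions $(x_2,x_3,x_5,x_6)\in P^4$; this follows from a co-area argument (using that $P$ is antipodally symmetric and that the sum map $(S^1)^4\to\R^2$ is a submersion a.e.), but your sentence ``one can find completions'' skips over it. Once that is filled in, the rest of your equality discussion is fine.
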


The analogue for $S^2$ is establish in \cite{Christ-Shao:extremal-for-sphere-restriction-I-existence}. We remark that Foschi \cite{Foschi:2015:maximizers-2D-sphere} has provided a much shorter proof for that by using the Cauchy-Schwarz inequality. 

\textbf{Step 3, A refinement of Tomas-Stein's inequality.}
Similarly as in \cite{Christ-Shao:extremal-for-sphere-restriction-I-existence}, we define what caps mean on $S^1$. 
\begin{definition}\label{def-cap}
The \emph{cap} $\mathcal{C}=\mathcal{C}(z,r)$ with center $z\in S^1$ and radius $r\in (0,1]$ is the set of all points $y\in S^1$ which lie in the same hemisphere as $z$ and are centered at $z$, and which satisfy $\pi_{H_z}(y)<r$, where the subspace $H_z\subset \R^2$ is the orthogonal complement of $z$ and $\pi_{H_z}$ denotes the orthogonal projection onto $H_z$.
\end{definition}

 In \cite{Christ-Shao:extremal-for-sphere-restriction-I-existence}, the refinement of Tomas-Stein's inequality for $S^2$ developed by  Bourgain \cite{Bourgain:1991:restri-multiplier} and Moyua, Vargas and Vega \cite{Moyua-Vargas-Vega:1999} provides some useful information on the near-extremals for the adjoint Fourier restriction inequality for $S^2$: Any near extremal can be decomposed into a major part, which obey some upper bound in the point-wise sense, and a lower $L^2$-norm bound, plus an error term. For $S^1$, we have the following refinement
\begin{lemma}\label{le-refinement-of-Tomas-Stein} For $f\in L^2(S^1)$. There exists $\alpha\in (0,1)$ such that
\begin{equation}\label{eq-r1}
\|\widehat{f\sigma}\|_6 \le \left(\sup_\mathcal{C}\frac {1}{|\mathcal{C}|^{1/2}} \int_\mathcal{C} |f|d\sigma \right)^{\alpha} \|f\|^{1-\alpha}_{L^2(S^1)},
\end{equation} where $\mathcal{C}$ denotes a cap on $S^1$.
\end{lemma}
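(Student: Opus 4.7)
The strategy follows the refined Tomas-Stein approach of Bourgain and Moyua-Vargas-Vega adapted to $S^1$, in the spirit of the $S^2$ analogue established in \cite{Christ-Shao:extremal-for-sphere-restriction-I-existence}. First, by Plancherel,
$$\|\widehat{f\sigma}\|_{L^6(\R^2)}^6 = (2\pi)^2\,\|f\sigma\ast f\sigma\ast f\sigma\|_{L^2(\R^2)}^2,$$
so the task reduces to a refined trilinear convolution bound. Setting $\Lambda:=\sup_{\mathcal{C}}|\mathcal{C}|^{-1/2}\int_{\mathcal{C}}|f|\,d\sigma$ and noting that $\Lambda\le \|f\|_{L^2(\sigma)}$ by Cauchy-Schwarz, the target reduces to gaining a positive power of $\Lambda/\|f\|_2$ over the trivial trilinear estimate $\|(f\sigma)^{\ast 3}\|_2 \lesssim \|f\|_2^3$ (equivalent to $L^6$ Tomas-Stein).

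The central tool is a bilinear transversality estimate: for caps $\tau_1, \tau_2$ of comparable radius separated by angular distance $d$,
$$\|f_{\tau_1}\sigma\ast f_{\tau_2}\sigma\|_{L^2(\R^2)} \lesssim d^{-1/2}\,\|f_{\tau_1}\|_{L^2(\sigma)}\,\|f_{\tau_2}\|_{L^2(\sigma)}.$$
This follows by parametrizing $x_j = (\cos\theta_j, \sin\theta_j)$, observing that the sum map $(\theta_1, \theta_2)\mapsto x_1 + x_2$ has Jacobian $|\sin(\theta_1-\theta_2)| \sim d$, and changing variables. I would then Whitney-decompose $S^1\times S^1$ into families $\mathcal{W}_k$ of pairs of caps of common radius $\sim 2^{-k}$ and separation $\sim 2^{-k}$; the outputs $f_{\tau_1}\sigma\ast f_{\tau_2}\sigma$ from distinct pairs at a fixed scale are supported in essentially disjoint regions of $\R^2$, which yields almost-orthogonality when they are summed.

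To bring in $\Lambda$, I use the elementary inequality $\|f_\tau\|_{L^1(\sigma)}\le \Lambda\,|\tau|^{1/2}$, which after H\"older interpolation with $\|f_\tau\|_{L^2(\sigma)}\le \|f\|_2$ yields
$$\|f_\tau\|_{L^p(\sigma)}\le \Lambda^{2/p-1}\,\|f\|_2^{2-2/p}\,|\tau|^{1/p-1/2}, \qquad 1\le p\le 2.$$
Combining the bilinear bound, this interpolation, and a third convolution against $f\sigma$ via Young's inequality applied to suitably localized pieces, one arrives at an estimate of the form $\|(f\sigma)^{\ast 3}\|_2\lesssim \Lambda^{3\alpha}\,\|f\|_2^{3(1-\alpha)}$ for some $\alpha\in(0,1)$. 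The principal obstacle is the summation over Whitney scales: the $d^{-1/2}\sim 2^{k/2}$ loss in the bilinear estimate must be overcome by the $|\tau|^{1/p-1/2}\sim 2^{-k(1/p-1/2)}$ gain from the interpolation. This is achieved by choosing $p$ slightly less than $2$ and verifying that the resulting geometric series in $k$ converges while still leaving a strictly positive power of $\Lambda$. This balancing is delicate but by now standard in refined restriction estimates of this type.
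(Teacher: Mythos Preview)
Your overall strategy---Whitney decomposition of $S^1\times S^1$, bilinear transversality, and interpolation to extract the cap-average quantity $\Lambda$---is the same as the paper's. But your execution diverges at the key technical step and contains a genuine gap.

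The paper does not handle a third convolution via Young's inequality. Instead it squares: $\|\widehat{f\sigma}\|_6^2=\|\widehat{f\sigma}\,\widehat{f\sigma}\|_3$, and then \emph{interpolates the bilinear estimate itself} between the $L^2$ bound and the trivial $L^\infty$ bound $\|\widehat{f_k\sigma}\,\widehat{f_l\sigma}\|_\infty\le\|f_k\|_1\|f_l\|_1$ to obtain
\[
\|\widehat{f_k\sigma}\,\widehat{f_l\sigma}\|_3 \lesssim 2^{-j/3}\,\|f_k\|_{3/2}\,\|f_l\|_{3/2}.
\]
This step is essential for two reasons. First, it produces $L^{3/2}$ norms on the right, which can then be interpolated between $L^1$ (bringing in $\Lambda$) and $L^p$ for some $p\in(3/2,2)$. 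Second, the Fourier supports of the Whitney products $f_k\sigma\ast f_l\sigma$ lie in finitely overlapping rectangles, so an almost-orthogonality lemma at $L^3$ gives an $\ell^{3/2}$ sum over pairs. Your plan keeps the bilinear $L^2$ estimate, which leaves only $\|f_\tau\|_2$ on the right side; your interpolation bound on $\|f_\tau\|_p$ then has no entry point, and the ``third convolution via Young'' step is not valid as stated since $f\sigma$ is a singular measure, not an $L^q$ function.

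There is also a second missing ingredient. After the $L^{3/2}$--$L^1$--$L^p$ interpolation, the paper does not sum a geometric series in the scale directly; it splits each $f_k=f_k^-+f_k^+$ according to the level set $\{|f|>2^{-j/2}\}$ and treats the two pieces by separate arguments, which is what actually makes the sum over scales converge. Your stated balancing (beat $2^{k/2}$ by $|\tau|^{1/p-1/2}\sim 2^{-k(1/p-1/2)}$ with $p$ just below $2$) fails arithmetically: that gain is far too small to cancel the $2^{k/2}$ loss, so the series diverges as written.
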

We establish this lemma by using the bilinear restriction estimates for functions on $S^1$ whose supports are ``transverse", i.e., the unit normals to each set are separated by an angle $>0$. The argument is similar to that for \cite[Theroem 1.3]{Begout-Vargas:2007:profile-schrod-higher-d}.

As a consequence of the refinement in Lemma \ref{le-refinement-of-Tomas-Stein}, we have
\begin{proposition}\label{prop-refinement-appliedto-near-extremal}
For any $\delta>0$ there exists $C_\delta<\infty$ and $\eta_\delta>0$ with the following properties. If $f\in L^2(S^1)$ satisfies $\|\widehat{f\sigma}\|_6 \ge \delta \mathcal{R} \|f\|_2$, then there exists a decomposition $f=g+h$ and a cap $\mathcal{C}$ satisfying that
\begin{align}
& 0\le |g|,|h|\le |f|,\\
& g,h \text{ have disjoint supports},\\
& |g(x)|\le C_\delta \|f\|_2|\mathcal{C}|^{-1/2} \chi_\mathcal{C}(x), \forall \,x,\\
& \|g\|_2\ge \eta_\delta \|f\|_2.
\end{align} Here both $C^{-1}_\delta$ and $\eta_\delta$ are proportional to $\delta^{O(1)}$.
If $f\ge 0$, $g$ and $h$ can be chosen such that $g,h\ge 0$ almost everywhere.
\end{proposition}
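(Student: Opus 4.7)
The plan is to combine the refinement in Lemma \ref{le-refinement-of-Tomas-Stein} with a level-set truncation on the cap that witnesses the refinement. Feeding the hypothesis $\|\widehat{f\sigma}\|_6 \ge \delta \mathcal{R}\|f\|_2$ into Lemma \ref{le-refinement-of-Tomas-Stein} yields
\[
\delta \mathcal{R}\|f\|_2 \le \Bigl(\sup_{\mathcal{C}} |\mathcal{C}|^{-1/2} \int_\mathcal{C} |f|\,d\sigma\Bigr)^\alpha \|f\|_2^{1-\alpha},
\]
so pigeonholing in the supremum produces a cap $\mathcal{C}$ with
\[
|\mathcal{C}|^{-1/2} \int_\mathcal{C} |f|\,d\sigma \ge A \|f\|_2, \qquad A := (\delta \mathcal{R})^{1/\alpha}.
\]

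With this cap fixed I would choose a truncation height $M := C_\delta \|f\|_2 |\mathcal{C}|^{-1/2}$, for a constant $C_\delta$ to be determined in a moment, and set
\[
g := f\,\chi_\mathcal{C}\,\chi_{\{|f|\le M\}}, \qquad h := f - g.
\]
By construction $g$ and $h$ have disjoint supports, $|g|,|h|\le |f|$, and $|g(x)| \le C_\delta \|f\|_2 |\mathcal{C}|^{-1/2}\chi_\mathcal{C}(x)$. If $f \ge 0$ then $g,h \ge 0$ trivially.

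The remaining task is the lower bound $\|g\|_2 \ge \eta_\delta \|f\|_2$. I would split
\[
\int_\mathcal{C} |f|\,d\sigma = \int_\mathcal{C} |g|\,d\sigma + \int_{\mathcal{C}\cap\{|f|>M\}} |f|\,d\sigma
\]
and estimate the tail by Cauchy--Schwarz combined with Chebyshev:
\[
\int_{\mathcal{C}\cap\{|f|>M\}} |f|\,d\sigma \le \|f\|_2 \,|\{|f|>M\}|^{1/2} \le \|f\|_2^2/M = C_\delta^{-1} |\mathcal{C}|^{1/2}\|f\|_2.
\]
Taking $C_\delta := 2/A$ makes the tail at most half of the lower bound on the full integral, leaving
\[
\int_\mathcal{C} |g|\,d\sigma \ge \tfrac{A}{2}\,|\mathcal{C}|^{1/2}\|f\|_2,
\]
and a final Cauchy--Schwarz on the cap gives $\|g\|_2 \ge \tfrac{A}{2}\|f\|_2 =: \eta_\delta \|f\|_2$. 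With these choices both $C_\delta^{-1}$ and $\eta_\delta$ are comparable to $(\delta\mathcal{R})^{1/\alpha}$, hence to $\delta^{O(1)}$ as asserted. The whole argument is a routine pigeonhole-and-truncate once Lemma \ref{le-refinement-of-Tomas-Stein} is granted; the substantive work already sits in that refinement (and in the bilinear estimate that feeds it), so I do not anticipate any additional obstacle here beyond bookkeeping the constants.
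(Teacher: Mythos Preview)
Your argument is correct and is precisely the standard pigeonhole-plus-level-set-truncation that the paper has in mind; indeed the paper does not spell out the proof at all, merely noting that ``Proposition \ref{prop-refinement-appliedto-near-extremal} follows by a similar argument as in \cite{Christ-Shao:extremal-for-sphere-restriction-I-existence, Shao:2008:linear-profile-Airy-Maximizer-Airy-Strichartz}.'' Your bookkeeping of the constants is clean and matches the asserted $\delta^{O(1)}$ dependence.
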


\textbf{Step 4, Upper even normalized w.r.s.t the caps.} As in \cite{Christ-Shao:extremal-for-sphere-restriction-I-existence}, we introduce the notion of rescaling maps that pull back functions on $S^1$ to $\R$ and obtain some preliminary control on the near-extremals.
\begin{definition}[Rescaling map $\phi_\mathcal{C}$]
Let $\mathcal{B}\subset \R$ denote the unit ball. To any cap of radius $\le 1$ is associated a rescaling map $\phi_\mathcal{C}: \mathcal{B}\to \mathcal{C}$. For $z=(0,1)$, $\phi_\mathcal{C}(y)=(ry,\sqrt{1-r^2y^2})$. For general $z$, define $\psi_z(y)=r^{-1}L(\pi(y))$ where $\pi$ is again the orthogonal projection onto $H_z$, $L:H_z\to \R$ is an arbitrary linear isometry and $\phi_\mathcal{C}=\psi_z^{-1}$.
\end{definition}
For a cap $\mathcal{C}= \mathcal{C}(z,r)$, we remark that $\phi_\mathcal{C}$ is naturally extended to defined on the set $\{y:\, |y|<r\}$.

\begin{definition}[Pullbacks]
Define the pullbacks $\phi_\mathcal{C}^*=r^{1/2}(f\circ\phi_\mathcal{C})(y)$ where $r$ is the radius of the cap $\mathcal{C}$.
\end{definition}
\begin{remark} This definition of pullbacks makes sense if $f$ is supported in the cap of radius $1$ concentric with $\mathcal{C}$.
\end{remark}

\begin{definition}[Upper normalized w.r.t. caps and balls]
Let $\Theta:[1,\infty)\to (0,\infty)$ satisfy $\Theta(R)\to 0$ as $R\to \infty$. A function $f\in L^2(S^1)$ is said to be upper normalized with respect to a cap $\mathcal{C}=\mathcal{C}(z,r)\subset S^1$ of radius $r$ and center $z$ if the following hold
\begin{align}
& \|f\|_2\le C<\infty,\\
& \int_{|f|>Rr^{-1/2}}|f|^2d\sigma(x) \le \Theta(R), \forall \,R\ge 1,\\
& \int_{|x-z|\ge Rr} |f|^2d\sigma(x) \le \Theta(R), \forall \,R\ge 1.
\end{align}An even function $f$ is said to be upper even-normalized with respect to $\Theta$, and $\mathcal{C}$ if $f$ can be decomposed into $f=f_++f_-$ where $f_-(x)=\overline{f_+(-x)}$, and $f_+$ is upper normalized with respect to $\Theta$ and $\mathcal{C}$.
A function $f\in L^2(\R)$ is said to be upper normalized with respect to the unit ball in $\R$ if
\begin{align}
& \|f\|_2\le C<\infty,\\
& \int_{|f|>R}|f|^2dx \le \Theta(R), \forall \,R\ge 1,\\
& \int_{|x|\ge R} |f|^2dx \le \Theta(R), \forall \,R\ge 1.
\end{align}
\end{definition}
\begin{definition}[Near-extremal]
A nonzero function $f\in L^2(S^1)$ is said to be $\delta$-nearly extremal for the inequality \eqref{eq-1} if
\begin{equation}\label{eq-35}
\|f\sigma \ast f\sigma \ast f\sigma \|_2\ge (1-\delta)^3 \mathbf{S}^3\|f\|_2^3.
\end{equation}
\end{definition}
The following proposition provides a preliminary decomposition for nearly extremals.
\begin{proposition}\label{prop-prelimi-decomp-near-extremals}
There exists a function $\Theta:\,[1,\infty)\to (0,\infty)$ satisfying $\Theta(R)\to 0$ as $R\to \infty$ with the following property. For any $\eps>0$, there exists $\delta >0$ such that any nonnegative even functions $f\in L^2(S^1) $ satisfying $\|f\|_2=1$ which is a $\delta$-nearly extremal may be decomposed as $f=F+G$, where $F$ and $G$ are even and nonnegative with disjoint supports, $\|G\|_2\le \eps$ and there exists a cap $\mathcal{C}$ such that $F$ is upper even-normalized with respect to $\mathcal{C}$.
\end{proposition}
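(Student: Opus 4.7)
My plan is to follow the iterative scheme of \cite{Christ-Shao:extremal-for-sphere-restriction-I-existence}: apply the refined Tomas--Stein inequality (Proposition~\ref{prop-refinement-appliedto-near-extremal}) repeatedly to $f$, at each step peeling off a bump concentrated on a cap $\mathcal{C}_k$ together with its antipodal reflection on $-\mathcal{C}_k$ (using the evenness of $f$), and then use the strict comparison $\S>(5/2)^{1/6}\mathbf{P}$ from Proposition~\ref{prop-strict-inequ} to force termination after a bounded number of iterations, leaving a remainder $G$ of $L^2$-norm at most $\eps$.

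Concretely, a $\delta$-nearly extremal $f$ with $\|f\|_2=1$ satisfies $\|\widehat{f\sigma}\|_6\ge(1-c\delta)\mathcal{R}$, so Proposition~\ref{prop-refinement-appliedto-near-extremal} produces a cap $\mathcal{C}_1=\mathcal{C}(z_1,r_1)$ and a splitting $f=g_1+h_1$ with $g_1\le C_\delta|\mathcal{C}_1|^{-1/2}\chi_{\mathcal{C}_1}$ and $\|g_1\|_2\ge\eta_\delta$. I may assume (otherwise the conclusion holds trivially) that $\mathcal{C}_1\cap(-\mathcal{C}_1)=\emptyset$, so that $g_1^\vee(x):=g_1(-x)$ is supported in $-\mathcal{C}_1$ and disjoint from $g_1$; then $F_1:=g_1+g_1^\vee$ is even and upper even-normalized with respect to $\mathcal{C}_1$ (with $f_+:=g_1$). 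If $\|f-F_1\|_2\le\eps$ we stop. Otherwise $(f-F_1)/\|f-F_1\|_2$ inherits a refined-norm lower bound from the original one, up to a controlled loss, so the iteration continues to produce further pairs $(\mathcal{C}_k,-\mathcal{C}_k)$ with $\|g_k\|_2\ge\eta_\delta$. Each step removes at least $\eta_\delta^2$ of the squared $L^2$-mass, so at most $O(\eta_\delta^{-2})$ steps occur. Among the extracted pairs I keep only the one of largest mass as $F$ and dump the rest into $G$; the claim $\|G\|_2\le\eps$ for $\delta=\delta(\eps)$ small enough is precisely where Proposition~\ref{prop-strict-inequ} enters, ruling out configurations in which two or more widely separated antipodal-pair bumps each retain a definite mass fraction.

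To obtain the full upper even-normalization with a universal $\Theta$, I would refine the termination argument by a stopping-time in height and in scale: for each $R$, one examines the height tail $\int_{|F_+|>Rr^{-1/2}}|F_+|^2\,d\sigma$ and the spatial tail $\int_{|x-z|\ge Rr}|F_+|^2\,d\sigma$, and observes that if either exceeds a threshold $\Theta(R)\to 0$, then the offending piece is itself a near-extremizer to which Proposition~\ref{prop-refinement-appliedto-near-extremal} applies, yielding a new cap either strictly inside $\mathcal{C}$ (height failure) or well separated from $\mathcal{C}$ (spatial failure), contradicting the termination just established. The main obstacle is the quantitative mass-splitting step: to make $\S>(5/2)^{1/6}\mathbf{P}$ do its work, I would osculate each $\mathcal{C}_k$ by a paraboloid so that $\|g_k\sigma*g_k\sigma*g_k\sigma\|_2$ is close to $\mathbf{P}^3\|g_k\|_2^3$, bound the nine-fold cross terms in $\|(\sum_k F_k)\sigma*(\sum_k F_k)\sigma*(\sum_k F_k)\sigma\|_2^2$ by a convexity inequality exploiting the angular separation between caps, and then compare the resulting upper bound against $(1-\delta)^6\S^6$; keeping these constants uniform in the radii and in the iteration count is the delicate bookkeeping step, performed for $S^2$ in \cite{Christ-Shao:extremal-for-sphere-restriction-I-existence} and to be adapted here using the Schr\"odinger approximation developed in Step~5.
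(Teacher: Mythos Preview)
Your iterative scheme matches the paper's decomposition algorithm, but you have misidentified the key ingredient that forces the extracted caps to cluster around a single antipodal pair. You invoke the strict comparison $\S>(5/2)^{1/6}\mathbf{P}$ (Proposition~\ref{prop-strict-inequ}) to rule out ``two or more widely separated antipodal-pair bumps,'' and you propose to osculate each cap by a paraboloid and borrow the Schr\"odinger approximation of Step~5. This is the wrong tool for this obstruction, and as written it is circular: the Schr\"odinger approximation in Section~\ref{sec:step5} is applied \emph{after} Proposition~\ref{prop-prelimi-decomp-near-extremals} has already reduced the problem to a single cap, and the strict comparison serves only to exclude concentration in a \emph{shrinking} cap (the radius $r_\nu\to 0$ scenario), not splitting among \emph{separated} caps of comparable scale.

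What actually prevents mass-splitting here is the purely geometric Lemma~\ref{le-distant-interaction} (``distant caps interact weakly''): $\|\chi_{\mathcal{C}}\sigma*\chi_{\mathcal{C}}\sigma*\chi_{\mathcal{C}'}\sigma\|_2\le\eps|\mathcal{C}||\mathcal{C}'|^{1/2}$ whenever $\varrho(\mathcal{C},\mathcal{C}')$ is large. In the paper's argument (Lemma~\ref{le-6}), once the algorithm has produced bumps $f_k$ on caps $\mathcal{C}_k$ each with $\|f_k\|_2\ge\eps$, one partitions the index set into two $\varrho$-separated clusters $S_1,S_2$ via Lemma~\ref{le-5}; the weak-interaction lemma kills every cross term in the sixfold convolution, yielding
\[
\|F\sigma*F\sigma*F\sigma\|_2^2\le \S^6\bigl(\|F_1\|_2^6+\|F_2\|_2^6\bigr)+o(1)
\le \S^6\bigl(\|F_1\|_2^2+\|F_2\|_2^2\bigr)\max_i\|F_i\|_2^4+o(1),
\]
and since both $\|F_i\|_2\ge\eps$ while $\|F_1\|_2^2+\|F_2\|_2^2\le 1$, the right side is bounded by $\S^6(1-\eps^2)^2+o(1)$, contradicting $(1-\delta)$-near-extremality for $\delta$ small. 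This is a strict-convexity argument comparing against $\S$ itself; $\mathbf{P}$ plays no role. With all $\mathcal{C}_k$ thus $\varrho$-close to $\mathcal{C}_0$, the height and spatial tail bounds defining upper even-normalization follow by summing the individual cap bounds $f_k\le C(\eta)|\mathcal{C}_k|^{-1/2}\chi_{\mathcal{C}_k\cup(-\mathcal{C}_k)}$ (Lemma~\ref{le-7}), again without any appeal to the paraboloid. Your proposal is missing precisely this weak-interaction input; without it the ``controlled loss'' you allude to when passing from $f$ to the remainder cannot be quantified, and nothing prevents the mass from spreading over many mutually distant caps.
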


It follows from two crucial facts: the refinement of Tomas-Stein's inequality for $S^1$ in Proposition \ref{prop-refinement-appliedto-near-extremal}, and a geometric fact that ``distant caps interact weakly" we will establish in Section \ref{sec:refined-tomas-stein}. The latter asserts, roughly speaking, that $\|\chi_\mathcal{C}\sigma \ast \chi_\mathcal{C}\sigma \ast  \chi_\mathcal{C'}\sigma \|_{2}\ll |\mathcal{C}||\mathcal{C'}|^{1/2}$ unless the caps $\mathcal{C}$ and $\mathcal{C'}$ have comparable radii and nearby centers.

\textbf{Step 5, Ruling out small caps and existence of extremals.} In \cite{Christ-Colliander-Tao:2003:asymptotics-modulation-canonical-defocusing-eqs}, the authors observe that the linear Airy evolution at high frequency is well approximated by a linear Schr\"odigner evolution, which is used in \cite{Shao:2008:linear-profile-Airy-Maximizer-Airy-Strichartz} to establish the linear profile decomposition for the Airy equation. In this paper, we have observed that a similar phenomena occurs for $\widehat{f\sigma}$ when $f$ is supported on a very small cap. More precisely,  given any extremizing sequence $\{f_\nu\}$ which is upper even normalized with respect to caps $C_\nu$ with radii $r_\nu\to 0$, $\widehat{f_\nu\sigma}$ can be written as a superposition of ``orthogonal" linear Schr\"odinger waves, plus a small error term. In this case, there follows that $$ \mathcal{R} \le (5/2)^{1/6}\mathcal{R}_P.$$ But it is a contradiction to the strict inequality that $ \mathcal{R} >(5/2)^{1/6}\mathcal{R}_P$. Thus $\inf_\nu r_\nu>0$. Then for ``large caps", one can indeed prove $f_\nu$ is precompact, which leads to the existence of extremals for \eqref{eq-1}.
\begin{proposition}\label{prop-regularity-after-rescaling}
Let $\{f_\nu\}\subset L^2(S^1)$ be an extremizing sequence for the inequality \eqref{eq-1} satisfying $\|f_\nu\|_2=1$ and $|f(-x)|=|f(x)|$ for a.e. $x\in S^1$. Suppose that each $|f_\nu|$ is upper even-normalized with respect to a cap $\mathcal{C}_\nu\cup (-\mathcal{C}_\nu)$ where $\mathcal{C}_\nu=\mathcal{C}(z_\nu,r_\nu)$, with constants uniform in $\nu$. Then $$ \inf_\nu r_\nu > 0.$$
In this case, an extremal for \eqref{eq-1} is obtained. 
\end{proposition}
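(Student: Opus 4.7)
The plan is a contradiction argument for the lower bound on $r_\nu$, followed by a standard Brezis-Lieb/concentration-compactness extraction. Suppose, for contradiction, that $r_\nu\to 0$ along a subsequence. By the upper even-normalization write $f_\nu = f_{\nu,+} + f_{\nu,-}$ with $f_{\nu,-}(x) = f_{\nu,+}(-x)$ and $f_{\nu,+}$ essentially supported in $\mathcal{C}_\nu$, and pull $f_{\nu,+}$ back to $\mathcal{B}\subset\R$ via $g_\nu := \phi_{\mathcal{C}_\nu}^* f_{\nu,+}$. This produces an $L^2(\R)$-bounded sequence upper-normalized with respect to the unit ball; extract a subsequential weak limit $g_\nu \rightharpoonup g$.

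The central technical step, and the main obstacle, is the Schr\"odinger approximation. Using the Taylor expansion $\sqrt{1-y^2} = 1-\tfrac12 y^2 + O(y^4)$ along the cap $\mathcal{C}_\nu$, one shows that, in Fourier coordinates adapted to the tangent frame at $z_\nu$ and parabolically rescaled by $(\xi_1,\xi_2)\mapsto(r_\nu\xi_1,\tfrac12 r_\nu^2\xi_2)$,
\[
\widehat{f_{\nu,+}\sigma}(\xi) = e^{-iz_\nu\cdot\xi}\,(\widehat{g_\nu\sigma_P})(T_\nu\xi) + E_\nu(\xi),\qquad \|E_\nu\|_{L^6(\R^2)} = o(1),
\]
with $T_\nu$ the above dilation, and a symmetric identity holds for $\widehat{f_{\nu,-}\sigma}$ with phase $e^{+iz_\nu\cdot\xi}$. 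The quadratic Taylor term reproduces the paraboloid $\sigma_P$; upper normalization of $g_\nu$ confines its effective Fourier support, on which the quartic remainder contributes only a negligible phase. This is the $S^1$-analogue of the Airy-to-Schr\"odinger reduction of \cite{Christ-Colliander-Tao:2003:asymptotics-modulation-canonical-defocusing-eqs} used in \cite{Shao:2008:linear-profile-Airy-Maximizer-Airy-Strichartz}, and tracking its precise $L^6$-error is the most delicate part of the argument.

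Granting the approximation, set $a_\nu := \widehat{f_{\nu,+}\sigma}$, $b_\nu := \widehat{f_{\nu,-}\sigma}$ and expand
\[
|a_\nu+b_\nu|^6 = \bigl(|a_\nu|^2 + |b_\nu|^2 + 2\Re(a_\nu\overline{b_\nu})\bigr)^3.
\]
Since $a_\nu$ and $b_\nu$ carry opposite phases $e^{\mp iz_\nu\cdot\xi}$, the cross product $a_\nu\overline{b_\nu}$ picks up $e^{-2iz_\nu\cdot\xi}$, and every monomial in the expansion that is not a polynomial in $|a_\nu|^2$ and $|b_\nu|^2$ inherits a phase $e^{\pm 2ikz_\nu\cdot\xi}$ with $k\in\{1,2\}$ that integrates to $o(1)$ against the slowly varying envelope by Riemann-Lebesgue as $r_\nu\to 0$. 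What survives is
\[
\int|a_\nu+b_\nu|^6 = 2\int|a_\nu|^6 + 18\int|a_\nu|^4|b_\nu|^2 + o(1) \le 20\int|a_\nu|^6 + o(1),
\]
using $\int|a_\nu|^4|b_\nu|^2\le\int|a_\nu|^6$ by H\"older. Since $\|g_\nu\|_2 = \|f_{\nu,+}\|_2 \le 1/\sqrt{2}$, the paraboloid inequality yields $\int|a_\nu|^6 \le \mathcal{R}_P^6/8 + o(1)$, whence $\|\widehat{f_\nu\sigma}\|_6^6 \le (5/2)\mathcal{R}_P^6 + o(1)$, contradicting Proposition \ref{prop-strict-inequ}. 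Hence $\inf_\nu r_\nu > 0$.

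For the existence of an extremizer, rotate so that $z_\nu\to z_\infty$. With $r_\nu\ge r_0>0$ and $z_\nu\to z_\infty$, the upper even-normalization places $f_\nu$ into the $L^\infty$-ball of a fixed cap-pair, modulo an $L^2$-tail of arbitrarily small norm. Extract a weakly convergent subsequence $f_\nu\rightharpoonup f$ in $L^2(S^1)$; then $\widehat{f_\nu\sigma}(\xi)\to\widehat{f\sigma}(\xi)$ pointwise and Brezis-Lieb in $L^6(\R^2)$ combined with Brezis-Lieb in $L^2(S^1)$ yields
\[
\mathcal{R}^6 = \|\widehat{f\sigma}\|_6^6 + \|\widehat{(f_\nu-f)\sigma}\|_6^6 + o(1) \le \mathcal{R}^6\bigl(\|f\|_2^6 + \|f_\nu-f\|_2^6\bigr) + o(1).
\]
With $\|f\|_2^2+\|f_\nu-f\|_2^2\to 1$, the identity $a^3+b^3=1-3ab$ (for $a+b=1$) forces either $f=0$ or $\|f_\nu-f\|_2\to 0$. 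The refinement of Tomas-Stein (Proposition \ref{prop-refinement-appliedto-near-extremal}) combined with the uniform lower bound $r_\nu\ge r_0$ and $z_\nu\to z_\infty$ produces a fixed cap on which $f_\nu$ has $L^2$-mass $\ge\eta>0$ and is uniformly $L^\infty$-bounded; this mass passes to the weak limit by a routine concentration-compactness argument, ruling out $f=0$. Hence $f_\nu\to f$ strongly in $L^2$ and $f$ is the desired extremizer.
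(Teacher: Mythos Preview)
Your argument has a genuine gap in the small-cap half, at precisely the step you flag as ``the most delicate part.'' The claim
\[
\widehat{f_{\nu,+}\sigma}(\xi) = e^{-iz_\nu\cdot\xi}\,(\widehat{g_\nu\sigma_P})(T_\nu\xi) + E_\nu(\xi),\qquad \|E_\nu\|_{L^6(\R^2)} = o(1)
\]
is not justified and is in fact false in general. After the parabolic rescaling, the sphere extension becomes $e^{is\Delta/2}\bigl(h_\nu(s,\cdot)g_\nu\bigr)$ with $h_\nu(s,y)=e^{is(\sqrt{1-r_\nu^2y^2}-1)/r_\nu^2+isy^2/2}$; the quartic correction in the exponent is $\sim -s r_\nu^2 y^4/8$. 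Upper normalization bounds $y$, so for each fixed rescaled time $s$ the phase error is $O(sr_\nu^2)\to 0$. But the $L^6$ norm integrates over all $s\in\R$, and for $|s|\sim r_\nu^{-2}$ the correction is $O(1)$ and does not vanish. Nothing in your argument prevents $L^6$ mass of $\widehat{g_\nu\sigma_P}$ from escaping to large $s$: upper normalization controls spatial/frequency spreading of $g_\nu$ but not time-translation noncompactness of the Schr\"odinger flow. This is exactly why the paper does not attempt a global $L^6$ approximation; instead it builds a profile decomposition (Proposition~\ref{prop-decomp}) that extracts the time-translated bubbles $(x_\nu^j,t_\nu^j)$, absorbs $h_\nu(t_\nu^j,\cdot)$ as a unimodular multiplier on each profile, and only then passes to the Schr\"odinger limit profile-by-profile with $s$ bounded. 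Your single weak-limit extraction $g_\nu\rightharpoonup g$ cannot substitute for this.

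Two further remarks. Your treatment of the antipodal cross terms is morally right but the invocation of Riemann--Lebesgue is not the cleanest tool: since $(f_{\nu,+}\sigma)^{*k}$ is supported near $kz_\nu$, the inner products $\int a_\nu^k\bar a_\nu^{6-k}$ for $k\neq 3$ vanish \emph{identically} once $r_\nu$ is small (disjoint supports), which recovers the exact factor $20/8=5/2$ as the paper does via the convolution identity \eqref{eq-81}. Finally, your Brezis--Lieb approach to the big-cap existence step is a legitimate alternative to the paper's second profile decomposition (Propositions~\ref{prop-decomp-bigcaps}--\ref{prop-ortho-bigcaps}); the dichotomy $a^3+(1-a)^3\ge 1\Rightarrow a\in\{0,1\}$ is correct, and ruling out $f=0$ does follow from nonnegativity plus the uniform $L^\infty$ bound on a fixed cap (test the weak limit against the constant function), though you should state the nonnegativity hypothesis explicitly since it is essential there.
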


The strict comparison on the optimal constants for Tomas-Stein's inequalities for the sphere and the paraboloid is essential to obtain existence of extremals to Tomas-Stein's inequality for the sphere. In high dimensions, it is the lack of the strict comparison on the optimal constants and the algebraic property of the even integer $6$ that prevent us from obtaining the existence of extremals.

Step 1 is established in the Appendix. We present Step 2 through Step 5 in what follows.

\section{Step 2. Antipodal symmetrization}\label{sec:symmetrization}
In this section, we will prove the functional $\|f\sigma \ast f\sigma \ast f\sigma \|_2^2/\|f\|_2^6$ is non-decreasing under the antipodal symmetrization defined in Definition \ref{def-even}.
\begin{proof}[Proof of Proposition \ref{prop-even-arrangement}]
For $f\ge 0$,
\begin{equation}\label{eq-7}
\|f\sigma \ast f\sigma \ast f\sigma \|_2^2 =\int f(a_1)\times \cdots \times f(a_6) d\lambda(a_1,\cdots, a_6)
\end{equation} for a certain non-negative measure $\lambda$ which is supported by the set
 \begin{equation}\label{eq-6}
\{(a_1,\cdots, a_6)\in (\R^2)^6:\,a_1+a_2+a_3=a_4+a_5+a_6\},
\end{equation} and which is invariant under the following transformations
\begin{equation}
\begin{split}
(a_1,a_2,a_3,a_4,a_5,a_6) &\mapsto (a_4,a_5,a_6,a_1,a_2,a_3),\\
(a_1,a_2,a_3,a_4,a_5,a_6)&\mapsto \bigl(a_{\tau(1)},a_{\tau(2)},a_{\tau(3)},a_4,a_5,a_6\bigr),\\
(a_1,a_2,a_3,a_4,a_5,a_6) &\mapsto (a_1,a_2,-a_4,-a_3,a_5,a_6),\\
(a_1,a_2,a_3,a_4,a_5,a_6)&\mapsto (a_1,-a_4,-a_5,-a_2,-a_3,a_6),
\end{split}
\end{equation}where $\tau \in S^3$, the permutation group of order 3. We denote by $G$ the finite group of symmetries of $(\R^2)^6$ generated by these symmetries. The cardinality of $G$ is $2\times 6!$ since there holds a short exact sequence
\begin{equation}
1\mapsto \{\pm 1\}\mapsto G\mapsto S^6\mapsto 1.
\end{equation} Note that in order for a sequence $(a_1,a_2,a_3,a_4,a_5,a_6)$ of fixed order to satisfy the requirement \eqref{eq-6}, the only way is to add ``$-$" sign. Hence from basic algebra, there holds that $|G/\{\pm 1\}|=|S^6|=6!$; thus $|G|=2\times 6!$ follows.

By the orbit of a point we mean its image under $G$; by a generic point we mean one whose orbit has cardinality $2\times 6!$. In \eqref{eq-7}, it suffices to integrate only over all generic $6$-tuples $(a_1,\cdots, a_6)$ satisfying \eqref{eq-6}, since they form a set of full $\lambda$-measure.

To the orbit $\mathcal{O}$ we associate the functions
\begin{equation}
\begin{split}
\mathcal{F}(\mathcal{O})&=\sum_{(a_1,\cdots, a_6)\in \mathcal{O}} f(a_1)\times \cdots \times f(a_6),\\
\mathcal{F}_\star(\mathcal{O})&=\sum_{(a_1,\cdots, a_6)\in \mathcal{O}} f_\star(a_1)\times \cdots \times f_\star(a_6)
\end{split}
\end{equation}
Let $\Omega$ denote the set of all orbits of generic points. We can write
\begin{equation}
\begin{split}
\|f\ast f\ast f\|_2^2 &=\int_{\Omega} \mathcal{F}(\mathcal{O})d\mu(\mathcal{O}),\\
\|f_\star \ast f_\star \ast f_\star\|_2^2&=\int_{\Omega} \mathcal{F}_\star(\mathcal{O})d\mu(\mathcal{O})
\end{split}
\end{equation} for a certain nonnegative measure $\mu$. Therefor it suffices to prove that for any generic orbit $\mathcal{O}$,
\begin{equation}\label{eq-8}
\sum_{(a_1,\cdots, a_6)\in \mathcal{O}} f(a_1)\times \cdots \times f(a_6) \le \sum_{(a_1,\cdots, a_6)\in \mathcal{O}} f_\star(a_1)\times \cdots \times f_\star(a_6).
\end{equation}
Fix any generic orbit $6$-tuple $(a_1,\cdots, a_6)$ satisfying \eqref{eq-6}, we prove \eqref{eq-8} for its orbit. By homogeneity, we may assume that $f(a_1)^2+f(-a_1)^2=1$ and that the same holds simultaneously for $a_i$ for $i=2,\cdots, 6$. Thus we may write
\begin{equation}
\begin{split}
&f(a_1)=\cos (\theta_1), f(a_2)=\cos(\theta_2), \cdots, f(a_6)=\cos(\theta_6),\\
&f(-a_1)=\sin (\theta_1), f(-a_2)=\sin(\theta_2), \cdots, f(-a_6)=\sin(\theta_6)
\end{split}
\end{equation} for $\theta_i\in [0, \frac \pi 2]$ for $i=1,\ldots, 6$. Thus by definition
\begin{equation}
f_\star =2^{-1/2}.
\end{equation}
Before writing out $\sum_{(a_1,\cdots, a_6)\in \mathcal{O}} f(a_1)\times \cdots \times f(a_6)$ for a generic orbit of $(a_1^\prime,\cdots, a_6^\prime)$, we note that there will be $2\times 6!$ summands, which can be organized into $20$ terms. Here $20$ comes out because $20=\frac {2\times 6!}{2\times 3!\times 3!}$. Below we will write out a long formula,
\begin{equation}
\begin{split}
&\frac {1}{2\times 3!\times 3!}\sum_{(a_1,\cdots, a_6)\in \mathcal{O}} f(a_1)\times \cdots \times f(a_6)\\
&=\cos(\theta_1)\cos(\theta_2)\cos(\theta_3)\cos(\theta_4)\cos(\theta_5)\cos(\theta_6)\\
&+\sin(\theta_1)\sin(\theta_2)\sin(\theta_3)\sin(\theta_4)\sin(\theta_5)\sin(\theta_6)\\
&+\begin{cases}\cos(\theta_1)\cos(\theta_2)\sin(\theta_3)\\
\cos(\theta_1)\sin(\theta_2)\cos(\theta_3)\\
\sin(\theta_1)\cos(\theta_2)\cos(\theta_3)
\end{cases}\times \begin{cases}\sin(\theta_4)
\cos(\theta_5)\cos(\theta_6)\\
\cos(\theta_4)\sin(\theta_5)\cos(\theta_6)\\
\cos(\theta_4)\cos(\theta_5)\sin(\theta_6)
\end{cases}\\
&+\begin{cases}\cos(\theta_1)\sin(\theta_2)\sin(\theta_3)\\
\sin(\theta_1)\cos(\theta_2)\sin(\theta_3)\\
\sin(\theta_1)\sin(\theta_2)\cos(\theta_3)\\
\end{cases}\times \begin{cases}
\sin(\theta_4)\sin(\theta_5)\cos(\theta_6)\\
\sin(\theta_4)\cos(\theta_5)\sin(\theta_6)\\
\cos(\theta_4)\sin(\theta_5)\sin(\theta_6)\\
\end{cases}\\
&=:\Gamma(\theta_1,\cdots, \theta_6).
\end{split}
\end{equation}
We will organize terms according to $\cos(\theta_5)\cos(\theta_6)$, $\sin(\theta_5)\sin(\theta_6)$, $\sin(\theta_5)\cos(\theta_6)$ and $\cos(\theta_5)\sin(\theta_6)$ and rewrite $\Gamma$ as
\begin{equation}\label{eq-9}
\begin{split}
\Gamma &=\cos(\theta_5)\cos(\theta_6) A+ \sin(\theta_5)\sin(\theta_6)B+ \bigl(\sin(\theta_5)\cos(\theta_6)+\cos(\theta_5)\sin(\theta_6)\bigr)C\\
&=\cos(\theta_5)\cos(\theta_6) A+ \sin(\theta_5)\sin(\theta_6)B+ \sin(\theta_5+\theta_6)C
\end{split}
\end{equation}where $A$ and $B$ contain $4$ terms, respectively, and $C$ contains $6$ terms. For the sake of simplicity, we will not explicitly write out $A$, $B$ and $C$; we will do so whenever it is necessary.

We aim to show that \begin{equation}\label{eq-10}
\max_{(\theta_1,\cdots, \theta_6)\in [0,\pi/2]^6}\Gamma=\Gamma (\pi/4,\cdots, \pi/4)=\frac {5}{2}.
\end{equation} This maximal value of $\Gamma$ matches values taken by $$\frac {1}{2\times 3!\times 3!}\sum_{(a_1,\cdots, a_6)} f_\star (a_1)\times \cdots\times f_\star(a_6)=\frac {20}{8}=\frac 52. $$

Suppose that $(\alpha_1,\cdots,\alpha_6)$ is a critical point of $\Gamma$, then at this point, there holds that
\begin{equation}
\frac {d\Gamma}{d\theta_5}=\frac {d\Gamma}{d\theta_6}=0,
\end{equation}which implies that, at this critical point,
\begin{equation}\label{eq-11}
0=\frac {d\Gamma}{d\theta_5}-\frac {d\Gamma}{d\theta_6}.
\end{equation}
We expand the right hand side of \eqref{eq-11} out to see
\begin{equation}\label{eq-12}
\sin(\alpha_5-\alpha_6)(A+B)=0.
\end{equation}
We will show that $A+B|_{(\alpha_1,\ldots,\alpha_6)}>0$; otherwise, since every term in $A+B$ is nonnegative, that $A+B=0$ will imply that every term is actually zero. This further implies that
\begin{equation}\label{eq-13}
\Gamma (\alpha_1,\cdots, \alpha_6)< \frac {5}{2},
\end{equation} which contradicts to the definition that $(\alpha_1,\cdots,\alpha_6)$ is a critical point.
Indeed, We writes out
\begin{equation}
\begin{split}
A&=\cos(\theta_1)\cos(\theta_2)\cos(\theta_3-\theta_4)+\sin(\theta_1+\theta_2)\cos(\theta_3)\sin(\theta_4),\\
B&=\sin(\theta_1)\sin(\theta_2)\cos(\theta_3-\theta_4)+\sin(\theta_1+\theta_2)\sin(\theta_3)\cos(\theta_4).
\end{split}
\end{equation}
Hence \begin{equation}
0=A+B=\cos(\theta_1-\theta_2)\cos(\theta_3-\theta_4)+\sin(\theta_1+\theta_2)\sin(\theta_3+\theta_4).
\end{equation}This implies that

\begin{equation}
\begin{cases}
\cos(\theta_1-\theta_2)\cos(\theta_3-\theta_4)=0,\\
\sin(\theta_1+\theta_2)\sin(\theta_3+\theta_4)=0.
\end{cases}
\end{equation}So we have the following four combinations,
\begin{equation}
\begin{cases}
\theta_1-\theta_2=\pi/2,\\
\theta_3+\theta_4=0,\text{ or }\pi.\\
\end{cases}
\begin{cases}
\theta_1-\theta_2=-\pi/2,\\
\theta_3+\theta_4=0,\text{ or }\pi.\\
\end{cases}
\begin{cases}
\theta_1+\theta_2=0,\text{ or }\pi,\\
\theta_3-\theta_4=-\pi/2.\\
\end{cases}
\begin{cases}
\theta_1+\theta_2=0,\text{ or }\pi,\\
\theta_3-\theta_4=\pi/2.\\
\end{cases}
\end{equation}
In all cases, we can show that $$\Gamma <\frac 52.$$ For instance, assume the first instance with
$$\theta_1-\theta_2 =\pi/2,\,\theta_3+\theta_4=0.$$
Then by the fact that all $\theta_i\in [0,\pi/2]$, we have
$$\theta_1=\pi/2, \theta_2=\theta_3=\theta_4=0.$$
Then $\Gamma$ is  simplified to
$$\Gamma=\sin(\theta_5+\theta_6)\le 1<5/2.$$

Thus \eqref{eq-12} forces that
\begin{equation}
\theta_5=\theta_6.
\end{equation}
By symmetry it immediately follows that
\begin{equation}\label{eq-14}
\begin{split}
\theta_4=\theta_5=\theta_6:=\beta,\\
\theta_1=\theta_2=\theta_3:=\alpha
\end{split}
\end{equation}
Observing that by symmetry we may exchange, say, $f(a_3)=\cos(\theta_3)=\cos(\alpha)$ and $f(-a_4)=\sin(\theta_4)=\sin(\beta)=\cos(\frac \pi2-\beta)$ in $\Gamma$, we conclude that
\begin{equation}\label{eq-15}
\alpha+\beta=\pi/2.
\end{equation}
Combining \eqref{eq-14} and \eqref{eq-15}, we see that
\begin{equation}
\Gamma (\alpha_1,\cdots,\alpha_6)=20\cos^3(\alpha)\cos^3(\beta)=\frac {20}{8}\bigl(2\sin(\alpha)\cos(\alpha)\bigr)^3=\frac {5}{2}\sin^3(2\alpha)\le \frac {5}{2}
\end{equation} with ``=" if and only if $\alpha=\frac \pi 4$. Hence the only choice for critical points is
\begin{equation}\alpha=\beta=\frac \pi 4.\end{equation} To conclude, we have established the claim \eqref{eq-10}. Hence \eqref{eq-8} follows. Therefore the proof of Theorem \eqref{prop-even-arrangement} is complete.
\end{proof}

\section{Step 3. A refined estimate and a geometric fact}\label{sec:refined-tomas-stein}
In this section we first establish the refinement of Tomas-Stein inequality for $S^1$ in Lemma \ref{le-refinement-of-Tomas-Stein}, which easily implies Proposition \ref{prop-refinement-appliedto-near-extremal}, see e.g. \cite{Carles-Keraani:2007:profile-schrod-1d,Shao:2008:linear-profile-Airy-Maximizer-Airy-Strichartz}. In the end of this section, we establish a geometric fact that ``distant caps interact weakly".

We aim to prove the following
\begin{proposition}\label{prop:refined-Tomas-Stein}
For $f\in L^2(S^1)$. There exists $\alpha\in (0,1)$ such that
\begin{equation*}
\|\widehat{f\sigma}\|_6 \le \left(\sup_\mathcal{C}\frac {1}{|\mathcal{C}|^{1/2}} \int_\mathcal{C} |f|d\sigma \right)^{\alpha} \|f\|^{1-\alpha}_{L^2(S^1)},
\end{equation*} where $\mathcal{C}$ denotes a cap on $S^1$.
\end{proposition}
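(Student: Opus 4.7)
The plan is to follow the refined-Strichartz template of Moyua--Vargas--Vega and B\'egout--Vargas: establish a bilinear restriction estimate for transverse pieces of $S^1$, then use a Whitney decomposition and interpolation with the usual Tomas--Stein bound to obtain the refined linear estimate. First I would prove: for $g_1,g_2\in L^2(S^1)$ supported on caps whose centers are separated by an angle bounded below by a positive absolute constant (``transverse''),
\begin{equation*}
\|\widehat{g_1\sigma}\,\widehat{g_2\sigma}\|_{L^2(\R^2)} \lesssim \|g_1\|_2\|g_2\|_2.
\end{equation*}
By Plancherel this is equivalent to $\|g_1\sigma\ast g_2\sigma\|_{L^2(\R^2)}\lesssim \|g_1\|_2\|g_2\|_2$. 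Parametrizing $S^1$ by angle, the sum map $(\theta_1,\theta_2)\mapsto e^{i\theta_1}+e^{i\theta_2}$ has Jacobian $|\sin(\theta_1-\theta_2)|$, bounded below by a positive constant on the transverse product, so a direct change of variables delivers the bound.

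Next, for each dyadic scale $r=2^{-k}$, I would partition $S^1$ into caps $\{\tau\}$ of radius $r$ and collect Whitney pairs $(\tau,\tau')$ at mutual distance $\sim r$. After dilating by $r^{-1}$, such a pair becomes transverse, so the bilinear estimate applies with explicit scaling constants. Writing $f=\sum_\tau f_\tau$ at each scale and exploiting almost-orthogonality of the sum sets $\tau+\tau'$ at a fixed scale (uniform in scale, thanks to the curvature of $S^1$), I would extract an $L^4$-refinement of the form
\begin{equation*}
\|\widehat{f\sigma}\|_{L^4(\R^2)}^{2} \lesssim \bigl(\sup_\tau|\tau|^{-1/2}\|f_\tau\|_{L^1(\sigma)}\bigr)^{\theta}\,\|f\|_{L^2(S^1)}^{2-\theta}
\end{equation*}
for some $\theta>0$, where the $L^1$-cap average on the right arises from interpolating the bilinear $L^2$-bound against the trivial endpoint $\|\widehat{g\sigma}\|_\infty\le \|g\|_{L^1(\sigma)}$ applied to one factor of the Whitney pair.

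Interpolating this $L^4$-refinement with the standard Tomas--Stein inequality $\|\widehat{f\sigma}\|_{L^6}\lesssim \|f\|_2$ then yields
\begin{equation*}
\|\widehat{f\sigma}\|_6\le \left(\sup_{\mathcal{C}}|\mathcal{C}|^{-1/2}\int_{\mathcal{C}}|f|\,d\sigma\right)^{\alpha} \|f\|_2^{1-\alpha}
\end{equation*}
for some $\alpha\in(0,1)$, completing the proof.

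The hardest step is organizing the Whitney sum across all dyadic scales while preserving the almost-orthogonality of the sum sets $\tau+\tau'$, which rests on the curvature of $S^1$ and the fact that off the diagonal and antidiagonal the sum map is a local diffeomorphism; this needs to be quantified uniformly in the cap radius to avoid logarithmic losses. A secondary technical point is tracking the exact exponents of $|\tau|$ so that the $L^1$-cap average (rather than an $L^2$- or $L^\infty$-cap quantity) appears naturally in the conclusion, but this is standard once the bilinear endpoint is in place.
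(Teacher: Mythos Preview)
Your plan has a genuine gap at the $L^4$ step. The inequality you propose,
\[
\|\widehat{f\sigma}\|_{L^4(\R^2)}^{2}\ \lesssim\ \Bigl(\sup_\tau |\tau|^{-1/2}\|f_\tau\|_{L^1(\sigma)}\Bigr)^{\theta}\,\|f\|_{L^2(S^1)}^{2-\theta},
\]
is false for $S^1$. Since $\sup_\tau |\tau|^{-1/2}\|f_\tau\|_1\le \|f\|_2$ by Cauchy--Schwarz, your estimate would imply the unrefined bound $\|\widehat{f\sigma}\|_4\lesssim \|f\|_2$, but for $S^1\subset\R^2$ the Tomas--Stein exponent is exactly $6$ and the $L^2\to L^4$ extension estimate fails. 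Concretely, the bilinear $L^2$ estimate at scale $2^j$ carries a transversality loss $2^{-j/2}$ (this is the paper's Lemma~\ref{le-bilinear}), and when you try to sum $\sum_j\sum_{(k,l)}2^{-j}\|f_k\|_2^2\|f_l\|_2^2$ over all dyadic scales the series diverges; no amount of almost-orthogonality of the sum sets rescues this, because the obstruction is already present at the level of a single scale as $j\to -\infty$. Even setting this aside, your second step---interpolating an $L^4$ refinement with the \emph{unrefined} $L^6$ Tomas--Stein bound---cannot produce an $L^6$ refinement: $L^6$ is an endpoint of that interpolation, so at $p=6$ you recover only $\|\widehat{f\sigma}\|_6\lesssim\|f\|_2$ with exponent $\alpha=0$ on the cap quantity.

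What the paper does instead is to carry out the bilinear-to-linear passage \emph{directly at} $L^6$, i.e.\ at bilinear $L^3$. One interpolates the bilinear $L^2$ bound $\|\widehat{f_k\sigma}\,\widehat{f_l\sigma}\|_2\lesssim 2^{-j/2}\|f_k\|_2\|f_l\|_2$ against the trivial bilinear $L^\infty$ bound $\|\widehat{f_k\sigma}\,\widehat{f_l\sigma}\|_\infty\le \|f_k\|_1\|f_l\|_1$ to obtain $\|\widehat{f_k\sigma}\,\widehat{f_l\sigma}\|_3\lesssim 2^{-j/3}\|f_k\|_{3/2}\|f_l\|_{3/2}$. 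Almost-orthogonality in $L^3$ (the $\ell^{3/2}$ side of Lemma~\ref{le-almost-ortho}) then reduces matters to controlling $\sum_{j,k}2^{-j/2}\|f_k\|_{3/2}^3$. The cap $L^1$-average enters by writing $\|f_k\|_{3/2}\le \|f_k\|_1^{1-\theta}\|f_k\|_p^{\theta}$ for a fixed $p\in(3/2,2)$, pulling out $(2^{-j/2}\|f_k\|_1)^{3(1-\theta)}$, and then summing the remaining $\sum_{j,k}2^{-j(3\theta/2-1)}\|f_k\|_p^{3\theta}$ via a level-set split $f_k=f_k 1_{\{|f|\le 2^{-j/2}\}}+f_k 1_{\{|f|>2^{-j/2}\}}$. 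Your Whitney/bilinear ingredients are the right ones; the fix is to abandon the $L^4$ detour and run this $L^3$-bilinear argument.
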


We recall two lemmas: The first is on the bilinear restriction estimates for functions whose supports are transverse.
\begin{lemma}\label{le-bilinear}
Let $f, g\in L^2(S^1)$ and assume that $f$ and $g$ are supported on the caps $\mathcal{C}_1$ and $\mathcal{C}_2$, which are separated by $2^j$ for $j\le 0$. Then
\begin{equation}\label{eq-r2}
\|\widehat{f\sigma} \widehat{g\sigma}\|_2 \le C 2^{-j/2} \|f\|_2\|g\|_2.
\end{equation}
\end{lemma}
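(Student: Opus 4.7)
The plan is to reduce the bilinear estimate to an $L^2$ bound on $f\sigma \ast g\sigma$ via Plancherel, and then read off this bound from an explicit computation of the convolution measure.

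By the identity $\widehat{f\sigma}\,\widehat{g\sigma} = \widehat{f\sigma \ast g\sigma}$ and Plancherel's theorem, it suffices to show $\|f\sigma \ast g\sigma\|_{L^2(\R^2)} \le C\, 2^{-j/2}\|f\|_2 \|g\|_2$. Parametrize $S^1$ by $\theta \mapsto x(\theta) := (\cos\theta,\sin\theta)$ so that $d\sigma = d\theta$, and consider the sum map $T(\theta_1,\theta_2) := x(\theta_1) + x(\theta_2)$. A direct computation shows that the Jacobian determinant of $T$ equals $\sin(\theta_2-\theta_1)$, which is precisely the sine of the angle between the outward unit normals at $x(\theta_1)$ and $x(\theta_2)$. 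For generic $\xi$ in the image of $T$, the fiber $T^{-1}(\xi)$ consists of exactly two points related by the involution $(\theta_1,\theta_2) \mapsto (\theta_2,\theta_1)$; since $\mathcal{C}_1$ and $\mathcal{C}_2$ are disjoint, at most one of these two points simultaneously satisfies $x(\theta_1)\in \mathcal{C}_1$ and $x(\theta_2)\in \mathcal{C}_2$. Hence only one summand contributes to the density of $f\sigma \ast g\sigma$ at $\xi$:
$$
(f\sigma \ast g\sigma)(\xi) \;=\; \frac{f(x(\theta_1(\xi)))\, g(x(\theta_2(\xi)))}{|\sin(\theta_2(\xi)-\theta_1(\xi))|},
$$
where $(\theta_1(\xi),\theta_2(\xi))$ denotes the unique contributing preimage.

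Squaring and pushing the integral back from $\xi$ to $(\theta_1,\theta_2)$ via $T$ introduces a compensating factor $|\sin(\theta_2-\theta_1)|$, yielding
$$
\|f\sigma \ast g\sigma\|_2^2 \;=\; \iint \frac{|f(x(\theta_1))|^2\,|g(x(\theta_2))|^2}{|\sin(\theta_2-\theta_1)|}\, d\theta_1\, d\theta_2,
$$
where the integration is over the $(\theta_1,\theta_2)$ with $x(\theta_1)\in \mathcal{C}_1$ and $x(\theta_2)\in \mathcal{C}_2$. The transversality hypothesis, that $\mathcal{C}_1$ and $\mathcal{C}_2$ are separated by $2^j$, says exactly that on this domain the normals subtend an angle of size at least a constant multiple of $2^j$, so $|\sin(\theta_2 - \theta_1)| \ge C^{-1}\,2^j$ (using $j \le 0$ so $2^j \le 1$). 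Plugging in this lower bound and applying Fubini yields $C\,2^{-j}\|f\|_2^2\|g\|_2^2$, and the desired bilinear bound follows upon taking square roots.

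The only technical point I expect will require real care is the two-to-one nature of the map $T$: specifically, verifying that the disjoint-support hypothesis eliminates the ambiguity between $(\theta_1,\theta_2)$ and $(\theta_2,\theta_1)$, so that each $\xi$ receives at most one contribution to the convolution density. Once this is settled, the remainder is a routine change-of-variables calculation combined with the transversality lower bound on the Jacobian, and I anticipate no further obstacles.
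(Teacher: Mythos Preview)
Your argument is correct and is essentially the approach the paper indicates: reduce via Plancherel to an $L^2$ bound on $f\sigma\ast g\sigma$, and then use the transversality of the caps to control the Jacobian of the sum map, which is exactly the geometric input $\|\sigma\ast\sigma'\|_\infty\le C\,2^{-j}$ the paper invokes. The only cosmetic difference is that the paper phrases the pointwise step as Cauchy--Schwarz against the convolved surface measure, whereas you exploit the single-preimage structure (available on $S^1$ with disjoint caps) to obtain an exact change-of-variables identity; both routes yield the same bound.
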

The proof of this lemma follows from Cauchy-Schwarz's inequality and the geometric estimate $\|\sigma\ast \sigma'\|_\infty \le C2^{-j}$ where $\sigma$ and $\sigma'$ denote the surface measures supported on the two caps on the sphere $S^1$ which are separated by an angle $\gtrsim 2^{j}$, $j\le 0$.

The second is on the ``almost orthogonality" for functions which have disjoint supports in the Fourier space, see for instance \cite[Lemma 6.1]{Tao-Vargas-Vega:1998:bilinear-restri-kakeya}.
\begin{lemma}\label{le-almost-ortho}
Let $\{R_k\}$ be a collection of rectangles and $c>0$ such that the dilates $(1 + c)R_k$ are almost disjoint (i.e.,
$\sum_k\chi_{(1+c)R_k}\le C<\infty$), and suppose that $\{f_k\}$ is a collection of functions supported by $\{R_k\}$. Then for all $1\le p\le \infty$,
\begin{equation}
\|\sum_k \widehat{f_k}\|_p \lesssim \left(\sum_k \|\widehat{f_k}\|_p^{p_0}\right)^{1/p_0}
\end{equation}where $p_0=\min\{p,p/(p-1)\}$.
\end{lemma}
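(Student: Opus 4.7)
The plan is to establish the two natural endpoints $p=2$ and $p\in\{1,\infty\}$ by hand, then obtain the intermediate exponents by complex interpolation on mixed-norm sequence spaces.

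\emph{Endpoint $p=2$ (where $p_0=2$).} Since $f_k$ is supported in $R_k\subset (1+c)R_k$ and the dilates $(1+c)R_k$ have bounded overlap, the original supports also satisfy $\sum_k \chi_{R_k}(x)\le C$ pointwise. By Plancherel,
\begin{equation*}
\Big\|\sum_k \widehat{f_k}\Big\|_2^2 \;=\; \Big\|\sum_k f_k\Big\|_2^2 \;=\; \int \Big|\sum_k f_k(x)\chi_{R_k}(x)\Big|^2\,dx.
\end{equation*}
A pointwise Cauchy-Schwarz bounds the integrand by $\bigl(\sum_k \chi_{R_k}(x)\bigr)\bigl(\sum_k |f_k(x)|^2\bigr)\le C\sum_k |f_k(x)|^2$, and integrating yields $\|\sum_k \widehat{f_k}\|_2^2 \lesssim \sum_k \|\widehat{f_k}\|_2^2$.

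\emph{Endpoints $p=1$ and $p=\infty$ (where $p_0=1$).} The triangle inequality alone gives $\|\sum_k \widehat{f_k}\|_p \le \sum_k \|\widehat{f_k}\|_p$, with no support hypothesis needed.

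\emph{Interpolation to general $p$.} Regard $T:\{F_k\}\mapsto \sum_k F_k$ as a linear operator on the closed subspace of sequences whose $k$-th entry has inverse Fourier support in $R_k$. The steps above read $T:\ell^2(L^2)\to L^2$, $T:\ell^1(L^\infty)\to L^\infty$, and (by the same triangle argument) $T:\ell^1(L^1)\to L^1$. Standard complex interpolation on mixed-norm $\ell^q(L^p)$ spaces (Bergh-L\"ofstr\"om) identifies $(\ell^{q_0}(L^{p_0}),\ell^{q_1}(L^{p_1}))_\theta = \ell^{q_\theta}(L^{p_\theta})$ with the usual relations; interpolating the $p=2$ and $p=\infty$ bounds yields $T:\ell^{p'}(L^p)\to L^p$ for $2\le p\le\infty$, while interpolating the $p=1$ and $p=2$ bounds yields $T:\ell^p(L^p)\to L^p$ for $1\le p\le 2$. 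These combine into $T:\ell^{p_0}(L^p)\to L^p$ with $p_0 = \min(p,p/(p-1))$, which is the stated estimate.

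\emph{Main obstacle.} The principal subtlety is justifying that complex interpolation respects the linear constraint ``$\check F_k$ is supported in $R_k$''; since this is a closed subspace, the interpolation functor restricts to it and the mixed-norm identity carries over. An alternative route, which avoids any vector-valued interpolation machinery, is to fix an extremizer $g$ with $\|g\|_{p'}=1$ in the duality $\|\sum_k \widehat{f_k}\|_p = \sup_g|\int g\sum_k \widehat{f_k}|$ and apply classical Riesz-Thorin to the bilinear form $(\{f_k\},g)\mapsto \sum_k \int \widehat{f_k}\,g$ between the two endpoints; this reduction is essentially routine.
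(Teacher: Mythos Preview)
The paper does not prove this lemma; it is quoted from Tao--Vargas--Vega \cite[Lemma~6.1]{Tao-Vargas-Vega:1998:bilinear-restri-kakeya}, so there is no in-paper argument to compare against. Your overall strategy --- establish the $p=2$ endpoint via Plancherel and bounded overlap, the $p\in\{1,\infty\}$ endpoints by the triangle inequality, and interpolate --- is the standard one, and the numerology you record is correct.

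There is, however, a real gap in the interpolation step. Your operator $T:\{F_k\}\mapsto\sum_k F_k$ is bounded $\ell^2(L^2)\to L^2$ only on the subspace where each $\check F_k$ is supported in $R_k$; on the full space it is unbounded. You write that ``since this is a closed subspace, the interpolation functor restricts to it,'' but closedness alone does not force a subspace to be preserved by complex interpolation --- one needs a common bounded projection (a retraction). The standard fix is to replace $T$ by the operator $\tilde T(\{F_k\})=\sum_k \widehat{\psi_k}\ast F_k$, where $\psi_k$ is a smooth bump equal to $1$ on $R_k$ and supported in $(1+c)R_k$. Then $\tilde T$ is defined on the full mixed-norm spaces, agrees with $T$ on the constrained subspace, satisfies your $p=2$ bound by the same Cauchy--Schwarz/overlap argument, and is bounded at $p=1,\infty$ \emph{precisely because} the $R_k$ are rectangles: one takes $\psi_k$ to be a tensor product of one-dimensional bumps, so that $\|\widehat{\psi_k}\|_1$ is bounded uniformly in $k$, and hence convolution with $\widehat{\psi_k}$ is uniformly bounded on every $L^p$. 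This is exactly where the rectangle hypothesis enters, and your argument as written never invokes it --- which is the signal that something is missing. Your proposed ``alternative route'' via bilinear Riesz--Thorin has the same defect, since it still requires identifying the interpolation spaces of the constrained sequence spaces.
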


\begin{proof}[Proof of Proposition \ref{prop:refined-Tomas-Stein}]
For $j\le 0$, we partition $S^1$ into a union of caps $\mathcal{C}^j_k$ of length $\sim 2^j$. For a given cap $\mathcal{C}^j_k$, we partition it into two equal caps $\mathcal{C}^{j-1}_{m}$ and $\mathcal{C}^{j-1}_{n}$ of length $2^{j-1}$; we say $\mathcal{C}^j_k$ is the ``parent" of $\mathcal{C}^{j-1}_{m}$ and $\mathcal{C}^{j-1}_{n}$. We define a relation $\mathcal{C}^j_{k_1}\sim \mathcal{C}^j_{k_2}$ if they are not adjoint but their parents are adjoint. Then
\begin{equation}\label{eq-r3}
\widehat{f\sigma}\widehat{f\sigma}=\sum_j \sum_{k}\sum_{l:\,\mathcal{C}^j_l\sim \mathcal{C}^j_k}  \widehat{f_k\sigma}\widehat{f_l\sigma},
\end{equation}
where $f_k:=f\chi_{\mathcal{C}^j_k}$ and $f_l:=f\chi_{\mathcal{C}^j_l}$. We write $\sum_j\sum_{(k,l)} :=\sum_j \sum_{k}\sum_{l:\,\mathcal{C}^j_l\sim \mathcal{C}^j_k}$.

By Lemma \ref{le-bilinear},
$$ \|\widehat{f_k\sigma} \widehat{f_l\sigma}\|_2 \le C2^{-j/2} \|f_k\|_2\|f_l\|_2.$$
On the other hand,
$$ \|\widehat{f_k\sigma} \widehat{f_l\sigma}\|_\infty \le C\|f_k\|_1\|f_l\|_1.$$
Then by interpolation,
\begin{equation}\label{eq-r4}
\|\widehat{f_k\sigma} \widehat{f_l\sigma}\|_3 \le C2^{-\frac j3} \|f_k\|_{3/2}\|f_l\|_{3/2}.
\end{equation}
From \eqref{eq-r3} and \eqref{eq-r4}, and Lemma \ref{le-almost-ortho},
\begin{equation}\label{eq-r6}
\begin{split}
\|\widehat{f\sigma}\|_6^2 &=\|\widehat{f\sigma}\widehat{f\sigma} \|_3=\|\sum_j\sum_{(k,l)}\widehat{f_k\sigma}\widehat{f_l\sigma} \|_3\\
&\lesssim \left( \sum_j\sum_{(k,l)} \|\widehat{f_k\sigma} \widehat{f_l\sigma}\|^{3/2}_3 \right)^{2/3}\\
&\lesssim \left( \sum_j\sum_{(k,l)} 2^{-j/2} \|f_k\|^{3/2}_{3/2} \|f_l\|^{3/2}_{3/2}\right)^{2/3}\\
&\lesssim \left( \sum_j\sum_k 2^{-j/2} \|f_k\|^3_{3/2}\right)^{2/3};
\end{split}
\end{equation}to pass to the last inequality, we have used that for given $k$, there are at most $O(1)$'s $\mathcal{C}^j_l\sim \mathcal{C}^j_k$.

By interpolation, for any $3/2<p<2$,
$$ 2^{-j/2} \|f_k\|^3_{3/2} \le 2^{-j/2} \|f_k\|_1^{3(1-\theta)}\|f_k\|^{3\theta}_p \le \left(2^{-j/2}\|f_k\|_1\right)^{3(1-\theta)}2^{-j(-1+3\theta/2)}\|f_k\|^{3\theta}_p,$$
where $\theta=p/3(p-1)$. So if normalizing $\|f\|_2=1$ and taking $\alpha=1-\theta$, it suffices to show that
$$ \sum_j\sum_k 2^{-j(3\theta/2-1)}\|f_k\|^{3\theta}_p \lesssim 1.$$
We decompose $f_k=f_k\chi_{|f|\le 2^{-j/2}}+f_k\chi_{|f|> 2^{-j/2}}=:f_k^{-}+f_k^{+}$. For $f_k^{-}$,
$$ \sum_j\sum_k 2^{-j(3\theta/2-1)}\|f_k^{-}\|^{3\theta}_p \lesssim 1.$$
Indeed, we apply H\"older's inequality with $(3\theta/p, 3\theta/(3\theta-p))$,
$$\sum_k \|f_k^{-}\|^{3\theta}_p \le \sum_k\int_{\mathcal{C}_k} |f|^{3\theta} (2^j)^{\frac {3\theta-p}{p}}\le \int |f|^{3\theta} (2^j)^{\frac {3\theta-p}{p}}.$$
Since $3\theta/2-1 -\frac {3\theta-p}{p} <0$ as $p<2$,
\begin{equation}\begin{split}
\sum_j\sum_k 2^{-j(3\theta/2-1)}\|f_k^{-}\|^{3\theta}_p &\le \int |f|^{3\theta} \sum_{j\le 0: |f|<2^{-j/2}} 2^{-j(3\theta/2-1 -\frac {3\theta-p}{p})}\\
&=C\int |f|^{3\theta} \times \sum_{j\le 0: 2^j< |f|^{-2}} 2^{j3\theta(\frac1p-\frac12)} \\
&\le C\int |f|^{3\theta} \times |f|^{3\theta-\frac {6\theta}p}
\le C\int |f|^{6\theta-\frac {6\theta}p}\\
&= C\int |f|^2<\infty.
\end{split}
\end{equation}
For $f_k^+$, we estimate it as follows: as $p<2$ and $3\theta=p'$, then $3\theta/p=p'/p>1$; then
\begin{equation}
\begin{split}
&\sum_{j,k} 2^{-j(\frac {3\theta}{2}-1)} \|f_k^+\|_p^{3\theta}=\sum_{j,k} 2^{-j(\frac {p'}{2}-1)} \|f_k^+\|_p^{p'}\\
&\le \left(\sum_{j,k} 2^{-jp(\frac {1}{2}-\frac {1}{p'})} \|f_k^+\|_p^p\right)^{p'/p}=\left(\sum_{j,k} 2^{-j\frac {2-p}{2}} \|f_k^+\|_p^p\right)^{p'/p}\\
&=\left(\sum_{j} 2^{-j\frac {2-p}{2}}\int_{|f|>2^{-j/2}} |f|^p \right)^{p'/p}=\left(\int  |f|^p \sum_{j\le 0:\,|f|>2^{-j/2}} 2^{-\frac j2 (2-p)} \right)^{p'/p}\\
&\le C\left(\int |f|^2 \right)^{p'/p}\le C.
\end{split}
\end{equation}
Hence Proposition \ref{prop:refined-Tomas-Stein} follows.
\end{proof}

Then Proposition \ref{prop-refinement-appliedto-near-extremal} follows by a similar argument as in \cite{Christ-Shao:extremal-for-sphere-restriction-I-existence, Shao:2008:linear-profile-Airy-Maximizer-Airy-Strichartz}.

Now we turn to show that ``distant caps interact weakly". We start with defining the distance between caps. The distance $\rho$ between two given caps $\mathcal{C}(z,r)$ and $\mathcal{C'}(z',r')$ is
$$\frac {r}{r'}+\frac {r'}{r}+\frac {|z-z'|}{r}.$$ For any metric space $(X,\rho)$ and any equivalence relation $\equiv$ on $X$, recall from the basic algebra the function $\rho([x],[y])=\inf_{x'\in [x],y'\in [y]}\rho(x',y')$ is a metric on the set of equivalence classes $X/\equiv$. Let $\mathcal{M}$ be the set of all caps $\mathcal{C}\subset S^1$ modulo the equivalence relation $\mathcal{C}\equiv -\mathcal{C}$, where $-\mathcal{C}=\{-x,x\in \mathcal{C}\}$. Then a metric on $\mathcal{M}$ can be defined in the following way.
\begin{definition}For any two caps $\mathcal{C}$, $\mathcal{C'}\in S^1$,
\begin{equation}
\varrho([\mathcal{C}], [\mathcal{C'}])=\min\bigl(\rho(\mathcal{C},\mathcal{C'}),\rho(-\mathcal{C},\mathcal{C'})\bigr),
\end{equation}where $[\mathcal{C}]$ denotes the equivalence class $[\mathcal{C}]=\{\mathcal{C},-\mathcal{C}\}\in \mathcal{M}$.
\end{definition} We will write $\varrho(\mathcal{C}, \mathcal{C'})=\varrho([\mathcal{C}], [\mathcal{C'}])$.

\begin{lemma}\label{le-distant-interaction}
For any $\eps>0$ there exists $\rho<\infty$ such that
\begin{equation}\label{eq-18}
\|\chi_\mathcal{C}\sigma\ast\chi_\mathcal{C}\sigma\ast \chi_{\mathcal{C'}}\sigma\|_{L^2} \le \eps |\mathcal{C}| |\mathcal{C'}|^{1/2}
\end{equation} whenever
\begin{equation}
\varrho(\mathcal{C}, \mathcal{C'}) >\rho.
\end{equation}
\end{lemma}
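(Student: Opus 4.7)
The plan is to perform a case analysis based on which of the three terms in $\varrho([\mathcal{C}],[\mathcal{C}'])$ forces it to be large, and in each case reduce the trilinear quantity to a combination of bilinear estimates controlled by Lemma \ref{le-bilinear}. The basic reduction is Young's inequality
\[
\|\chi_\mathcal{C}\sigma\ast\chi_\mathcal{C}\sigma\ast\chi_{\mathcal{C}'}\sigma\|_2 \le |\mathcal{C}|\cdot\|\chi_\mathcal{C}\sigma\ast\chi_{\mathcal{C}'}\sigma\|_2,
\]
valid whenever the bilinear convolution is finite, i.e.\ when $\mathcal{C}$ and $\mathcal{C}'$ are transverse.

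Unfolding $\varrho>\rho$ yields two alternatives: either (A) $r/r'+r'/r>\rho/2$, so the radii are disparate, or (B) $r\asymp r'$ and both $|z-z'|/r>\rho/2$ and $|z+z'|/r>\rho/2$, so the centers are simultaneously far from parallel and far from antipodal. In case (B), the identity $|z-z'|^2+|z+z'|^2=4$ forces at least one of the two chord distances to be $\ge\sqrt{2}$, whence $\sin\alpha=\tfrac{1}{2}|z-z'||z+z'|\gtrsim\rho r$ where $\alpha$ is the angular distance between $z$ and $z'$. Lemma \ref{le-bilinear} then gives $\|\chi_\mathcal{C}\sigma\ast\chi_{\mathcal{C}'}\sigma\|_2\lesssim(|\mathcal{C}||\mathcal{C}'|/\sin\alpha)^{1/2}$, and combining with the Young reduction,
\[
\|\chi_\mathcal{C}\sigma\ast\chi_\mathcal{C}\sigma\ast\chi_{\mathcal{C}'}\sigma\|_2\lesssim|\mathcal{C}|\cdot(|\mathcal{C}||\mathcal{C}'|/\sin\alpha)^{1/2}\lesssim\rho^{-1/2}\,|\mathcal{C}||\mathcal{C}'|^{1/2},
\]
which is $\le\eps|\mathcal{C}||\mathcal{C}'|^{1/2}$ once $\rho\gtrsim\eps^{-2}$.

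The main obstacle is case (A): one cap may be essentially nested inside the other, so that the bilinear norm $\|\chi_\mathcal{C}\sigma\ast\chi_{\mathcal{C}'}\sigma\|_2$ can even be infinite and no Young-type reduction works directly. The plan for this regime is to decompose the larger cap into sub-caps of the size of the smaller. When $r'\gg r$, write $\mathcal{C}'=\bigsqcup_{j=1}^{N}\mathcal{C}'_j$ with $N\sim r'/r$ and each $\mathcal{C}'_j$ of radius $\sim r$, split $\chi_\mathcal{C}\sigma\ast\chi_\mathcal{C}\sigma\ast\chi_{\mathcal{C}'}\sigma=\sum_j U_j$ with $U_j:=\chi_\mathcal{C}\sigma\ast\chi_\mathcal{C}\sigma\ast\chi_{\mathcal{C}'_j}\sigma$, and observe that each $U_j$ is supported in a ball of radius $O(r)$ around $2z+z'_j$, so the supports have bounded overlap and $\|\sum_j U_j\|_2^2\lesssim\sum_j\|U_j\|_2^2$. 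For the $O(1)$ indices $j$ with $\mathcal{C}'_j$ within distance $\lesssim r$ of $\mathcal{C}$, use the trivial Tomas-Stein bound $\|U_j\|_2\lesssim|\mathcal{C}||\mathcal{C}'_j|^{1/2}\sim r^{3/2}$; for the remaining indices the angular separation $\alpha_j$ between $\mathcal{C}$ and $\mathcal{C}'_j$ is $\gtrsim r$, the bilinear estimate yields $\|\chi_\mathcal{C}\sigma\ast\chi_{\mathcal{C}'_j}\sigma\|_2\lesssim r/\sqrt{\alpha_j}$, and hence $\|U_j\|_2\lesssim r^2/\sqrt{\alpha_j}$. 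Using $\alpha_j\sim jr$ and summing produces $\sum\|U_j\|_2^2\lesssim r^3\log(r'/r)$, whence $\|\sum_j U_j\|_2\lesssim\sqrt{(r/r')\log(r'/r)}\;|\mathcal{C}||\mathcal{C}'|^{1/2}$, a prefactor that tends to $0$ as $r'/r\to\infty$. The symmetric sub-case $r\gg r'$ is treated by the analogous decomposition of $\mathcal{C}$ into $\sim r/r'$ sub-caps of radius $\sim r'$, separating diagonal from off-diagonal pair contributions; taking $\rho$ sufficiently large in terms of $\eps$ in every regime completes the argument.
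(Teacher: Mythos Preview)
Your route is genuinely different from the paper's, and cases (B) and (A) with $r'\gg r$ are essentially sound. The paper does not use your Young reduction to a bilinear $L^2$ norm at all; instead it reduces the trilinear $L^2$ norm to a bilinear $L^{3/2}$ norm via
\[
\|\chi_\mathcal{C}\sigma\ast\chi_\mathcal{C}\sigma\ast\chi_{\mathcal{C}'}\sigma\|_2
=\|(\widehat{\chi_\mathcal{C}\sigma})^2\widehat{\chi_{\mathcal{C}'}\sigma}\|_2
\le \|\widehat{\chi_\mathcal{C}\sigma}\|_6\,\|\widehat{\chi_\mathcal{C}\sigma}\,\widehat{\chi_{\mathcal{C}'}\sigma}\|_3
\lesssim |\mathcal{C}|^{1/2}\,\|\chi_\mathcal{C}\sigma\ast\chi_{\mathcal{C}'}\sigma\|_{3/2},
\]
(Tomas--Stein and Hausdorff--Young), and then bounds $\|\chi_\mathcal{C}\sigma\ast\chi_{\mathcal{C}'}\sigma\|_{3/2}$ directly by an $L^\infty$ bound times the measure of the support, together with an excision trick in the nested case. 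The virtue of $L^{3/2}$ is that $\sigma\ast\sigma\in L^{3/2}_{\mathrm{loc}}$, so this bilinear norm is \emph{always} finite and is symmetric in the two caps; no decomposition is needed.

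Your sub-case $r\gg r'$ has a real gap. Decomposing $\mathcal{C}=\bigsqcup_j\mathcal{C}_j$ into $N\sim r/r'$ pieces produces a \emph{double} sum $\sum_{j,k}U_{jk}$, $U_{jk}=\chi_{\mathcal{C}_j}\sigma\ast\chi_{\mathcal{C}_k}\sigma\ast\chi_{\mathcal{C}'}\sigma$, and the crucial almost-disjointness that drove the $r'\gg r$ argument is lost: the centers $z_j+z_k+z'$ have tangential spacing $\sim r'$ but radial spacing only $\sim|j-k|(r')^2$, so for $|j-k|\lesssim 1/r'$ the $O(r')$-balls overlap heavily. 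Grouping by $m=j-k$ and using almost-disjointness in $j$ at fixed $m$ recovers only the target scaling $|\mathcal{C}||\mathcal{C}'|^{1/2}$, with no factor of $\eps$; summing by the triangle inequality is worse. Equivalently, the excision-plus-Young-plus-$L^2$-bilinear approach balances at exactly the target: removing from $\mathcal{C}$ a $\delta$-neighborhood of $z'$ gives a Tomas--Stein error $\sim(\delta r r')^{1/2}$ and a main term $\sim r^{3/2}(r')^{1/2}/\sqrt{\delta}$, which meet at $\delta=r$ with value $r\sqrt{r'}=|\mathcal{C}||\mathcal{C}'|^{1/2}$. So ``separating diagonal from off-diagonal pair contributions'' does not close this case; some substitute for the paper's $L^{3/2}$ reduction is needed here.

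One minor point in the $r'\gg r$ case: since case (A) places no constraint on the centers, some sub-caps $\mathcal{C}'_j$ may lie near the antipode $-z$ as well as near $z$; the bilinear bound degenerates there too (the correct transversality parameter is $\sin\alpha_j$, not $\alpha_j$). This is easily absorbed---there are again $O(1)$ such bad sub-caps, handled by the same Tomas--Stein bound---but it should be said.
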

\begin{proof}
Set
$$ f=|\mathcal{C}|^{-1/2} \chi_\mathcal{C}\le Cr^{-1/2} \chi_\mathcal{C},\,\tilde{f}=|\mathcal{\tilde{C}}|^{-1/2} \chi_\mathcal{\tilde{C}}\le C\tilde{r}^{-1/2} \chi_\mathcal{\tilde{C}}.$$
Assume $\tilde{r}\le r$ and $\tilde{r}\ll 1$ and consider the case where no points are nearly antipodals. The case where the points are antipodals can be reduced to the previous case by using the identity
\begin{equation}\label{eq-81}
\langle f_1\sigma \ast f_2\sigma\ast f_3\sigma,  f_4\sigma \ast f_5\sigma \ast f_6\sigma \rangle =\langle f_1\sigma \ast f_2\sigma\ast \tilde{f_4}\sigma,  \tilde{f_3}\sigma \ast f_5\sigma \ast f_6\sigma\rangle
\end{equation} for any non-negative functions $f_j\in L^2(S^1)$, $j=1,\cdots, 6$. Let $\eps $ be given. It suffices to show, if $\varrho(\mathcal{C}, \mathcal{C'})\ge \rho$,
\begin{equation}\label{eq-19}
\|\chi_\mathcal{C}\sigma \ast \chi_{\mathcal{C'}}\sigma\|_{L^{3/2}} \le \eps |\mathcal{C}|^{1/2} |\mathcal{C'}|^{1/2}.
\end{equation}

{\bf Case I.} Assume $r\sim \tilde{r}$ and $|x-\tilde{x}|\ge 10 r$. Observe that
\begin{equation}
\|f\sigma \ast \tilde{f}\sigma \|_\infty\le (r\tilde{r})^{-1/2}\frac {C}{|x-\tilde{x}|},
\end{equation} and $f\sigma \ast \tilde{f}\sigma$ is supported in a rectangle with width $r$ and height $r|x-\tilde{x}|.$ Then
\begin{equation}
\|f\sigma \ast \tilde{f}\sigma\|_{L^{3/2}(\R^2)} \le (r\tilde{r})^{-1/2}\frac {C}{|x-\tilde{x}|} (r^2|x-\tilde{x}|)^{2/3} =C\left(\frac {r}{|x-\tilde{x}|}\right)^{1/3}\ll 1.
\end{equation}

{\bf Case II.} Assume $\tilde{r}\ll r$ and $|x-\tilde{x}|\ge 10 r$.
Still there holds
\begin{equation}
\|f\sigma \ast \tilde{f}\sigma \|_\infty\le (r\tilde{r})^{-1/2}\frac {C}{|x-\tilde{x}|},
\end{equation} but $f\sigma \ast \tilde{f}\sigma$ is supported in a tube with base length $r$ and width $\tilde{r}|x-\tilde{x}|.$ Then
\begin{equation}
\|f\sigma \ast \tilde{f}\sigma\|_{L^{3/2}(\R^2)} \le (r\tilde{r})^{-1/2}\frac {C}{|x-\tilde{x}|} (r\tilde{r}|x-\tilde{x}|)^{2/3} \le C\frac {(r\tilde{r})^{1/6}}{|x-\tilde{x}|^{1/3}}\le C(\tilde{r}/r)^{1/6}\ll 1.
\end{equation}

{\bf Case III.} Assume that $\tilde{r}\ll r$ and $|x-\tilde{x}|\le 10 r$. The analysis in Case II almost applies if $|x-\tilde{x}|\ge cr$ for some universal constant $c>0$, while it breaks down when their centers are very close. To cope with this difficulty, we employ the trick in \cite{Christ-Shao:extremal-for-sphere-restriction-I-existence}: we replace $f$ by its restriction $F$ to the complement of the cap $\mathcal{\tilde{C}}^\star$ centered at $\tilde{x}$ of radius $10r^{3/4}\tilde{r}^{1/4}$. Then
\begin{equation}
\|f-F\|_2 \le Cr^{-1/2}(r^{3/4}\tilde{r}^{1/4})^{1/2}\le C(\tilde{r}/r)^{1/8}\ll 1.
\end{equation}
Then we observe that
\begin{equation}
\|F\sigma \ast \tilde{f}\sigma \|_\infty\le (r\tilde{r})^{-1/2}\frac {C}{|x-\tilde{x}|},
\end{equation} but $F\sigma \ast \tilde{f}\sigma$ is supported in a tube with base length $r$ and width $\tilde{r}|x-\tilde{x}|$, whose area is less than $Cr\tilde{r}|x-\tilde{x}|$. Then
\begin{equation}
\begin{split}
\|F\sigma \ast \tilde{f}\sigma\|_{L^{3/2}(\R^2)} &\le (r\tilde{r})^{-1/2}\frac {C}{|x-\tilde{x}|} (r\tilde{r}|x-\tilde{x}|)^{2/3} \\
&\le C\frac {(r\tilde{r})^{1/6}}{|x-\tilde{x}|^{1/3}}\le C\frac {(r\tilde{r})^{1/6}}{\bigl(r^{3/4}\tilde{r}^{1/4}\bigr)^{1/3}} \le C\bigl(\frac {\tilde{r}}{r}\bigr)^{1/12}\ll 1.
\end{split}
\end{equation}
\end{proof}

\section{Step 4. On near-extremals: Proposition \ref{prop-prelimi-decomp-near-extremals}.}\label{sec:prelimi-decomp}
In this section, we aim to prove Proposition \ref{prop-prelimi-decomp-near-extremals}, which roughly speaking states that any nearly extremal to the inequality satisfies some appropriately scaled upper bounds relative to some cap up to a small error in $L^2$. As remarked in \cite{Christ-Shao:extremal-for-sphere-restriction-I-existence}, its proof is largely a formal argument relying on two inputs, Lemma \ref{prop-refinement-appliedto-near-extremal} and Lemma \ref{le-distant-interaction} which are already established in the previous step. We begin with
\begin{lemma}\label{le-1st-rule}
Let $f=g+h\in L^2(S^1)$. Suppose that $\langle g,h \rangle =0$ and $g\neq 0$, and that $f$ is a $\delta$-nearly extremal for some $\delta\in (0,\frac 14]$. Then
\begin{equation}\label{eq-29}
\frac {\|h\|_2}{\|f\|_2}\le C\max\left(\frac {\|h\sigma\ast h\sigma\ast h\sigma\|_2^{1/3}}{\|h\|_2},\delta^{1/2}\right),
\end{equation}where $0<C<\infty$ is a constant independent of $g$ and $h$.
\end{lemma}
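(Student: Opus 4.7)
The plan is to exploit a clean cube-root triangle inequality for the trilinear functional
$$T(f) := \|f\sigma\ast f\sigma\ast f\sigma\|_2^{1/3}.$$
By Plancherel in $\R^2$ applied to $(f\sigma)^{\ast 3}$, whose Fourier transform equals $(\widehat{f\sigma})^3$, one has $T(f) = (2\pi)^{-1/3}\|\widehat{f\sigma}\|_{L^6(\R^2)}$. Consequently $T$ inherits from the norm $\|\cdot\|_{L^6}$ the triangle inequality
$$T(u+v)\;\le\;T(u)+T(v),\qquad u,v\in L^2(S^1).$$

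I would apply this to $f=g+h$ and combine it with three elementary facts: the Tomas--Stein inequality $T(g)\le\S\|g\|_2$; the definitional identity $T(h)=\eta\|h\|_2$ with $\eta:=T(h)/\|h\|_2$; and the $\delta$-near-extremality hypothesis $T(f)\ge (1-\delta)\S\|f\|_2$. Chaining these yields
$$(1-\delta)\S\|f\|_2\;\le\;\S\|g\|_2+\eta\|h\|_2.$$

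After normalizing $\|f\|_2=1$ and writing $a=\|g\|_2$, $b=\|h\|_2$, $\nu=\eta/\S$, the orthogonality $\langle g,h\rangle=0$ gives $a^2+b^2=1$ and the displayed inequality becomes $a+\nu b\ge 1-\delta$ on this unit circle. In the main regime $1-\delta-\nu b\ge 0$, squaring $\sqrt{1-b^2}\ge 1-\delta-\nu b$ produces the quadratic inequality
$$(1+\nu^2)b^2-2(1-\delta)\nu b-(2\delta-\delta^2)\;\le\;0,$$
whose nonnegative root is bounded by $b\le 2\nu+\sqrt{2\delta}$. In the complementary regime $\nu b>1-\delta$, the constraint $b\le 1$ together with $\delta\le 1/4$ forces $\nu>1-\delta\ge 3/4$, whence $b\le 1\le \tfrac{4}{3}\nu$. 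Either way $b\le C\max(\nu,\sqrt{\delta})$, and since $\S$ is a fixed universal constant this upgrades to $\|h\|_2/\|f\|_2\le C'\max(\eta,\sqrt{\delta})$, which is the claim.

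The only conceptual content is the $L^6$-triangle-inequality observation for $T$ in the first paragraph; the rest is one-variable quadratic algebra. I do not anticipate any substantive obstacle, and in particular the hypothesis $\langle g,h\rangle = 0$ is used in exactly one place, namely to convert $\|f\|_2^2$ into $\|g\|_2^2+\|h\|_2^2$ so as to place $(a,b)$ on the unit circle.
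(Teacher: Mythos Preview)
Your proof is correct. The paper itself omits the proof of this lemma, referring to \cite[Lemma 7.1]{Christ-Shao:extremal-for-sphere-restriction-I-existence} for the analogous $S^2$ statement; your argument is precisely the standard one used there, namely the observation that $T(f)=(2\pi)^{-1/3}\|\widehat{f\sigma}\|_{L^6}$ inherits the triangle inequality from the $L^6$ norm, followed by the elementary quadratic analysis on the unit circle $a^2+b^2=1$.
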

The proof of this lemma is similar to \cite[Lemma 7.1]{Christ-Shao:extremal-for-sphere-restriction-I-existence} and will be omitted.

\subsection{A decomposition algorithm}
Let $f\in L^2(S^1)$ be a nonnegative function with positive norm. The same algorithm in \cite{Christ-Shao:extremal-for-sphere-restriction-I-existence} applies:
$$f=\sum_{0\le k\le \nu} f_k+G_{\nu+1}, \,\nu=0,1,\ldots$$
with the following properties.
\begin{itemize}
\item $G_0:=f$ and $\eps_0=1/2$. The inputs for Step $\nu$ are a nonnegative function $G_\nu\in L^2(S^1)$ and a positive number $\eps_\nu$. The outputs are functions $f_\nu$ and $G_{\nu+1}$ and nonnegative numbers $\eps_\nu^*$ and $\eps_{\nu+1}$.
    \begin{itemize}
    \item If $\|G_\nu\sigma \ast G_\nu\sigma \ast G_\nu\sigma \|_2=0$, then $G_\nu=0$ almost everywhere. The algorithm then terminates and we define $\eps_\nu^*=0$, $f_\nu=0$, and $G_\nu=0, f_\mu=0 $ and $\eps_\mu=0$ for $\mu> \nu$.
    \item If $0<\|G_\nu\sigma \ast G_\nu\sigma \ast G_\nu\sigma \|_2<\eps_\nu^3 \mathbf{S}^3 \|f\|_2^3$, we replace $\eps_\nu$ by $\eps_\nu/2$, and repeat until the first time that $\|G_\nu\sigma \ast G_\nu\sigma \ast G_\nu\sigma \|_2\ge \eps_\nu^3 \mathbf{S}^3 \|f\|_2^3$. Define $\eps_\nu^*$ to be this value of $\eps_\nu$. Then
  \begin{equation}\label{eq-51}
  (\eps^*_\nu)^3\mathbf{S}^3\|f\|_2^3 \le \|G_\nu\sigma \ast G_\nu\sigma \ast G_\nu\sigma \|_2\le 8(\eps^*_\nu)^3\mathbf{S}^3\|f\|_2^3.
  \end{equation}
Then an application of Lemma \ref{prop-refinement-appliedto-near-extremal} yields a decomposition for $G_\nu$: namely we obtain a cap $\mathcal{C}_\nu$ and $G_\nu=f_\nu+G_{\nu+1}$, where $f_\nu$ and $G_{\nu+1}$ satisfy the properties listed in that Lemma. We remark that the constants $C_\nu\le C(\eps_\nu^*)^{-O(1)}$ and $\eta_\nu\ge C(\eps_\nu^*)^{O(1)}$.  Define $\eps_{\nu+1}=\eps_\nu^*$ and move on to the next step $\nu+1$.
 \end{itemize}

\item If $f$ is even, then $f_\nu$ may likewise be chosen to be even.

\item If the algorithm terminates at some finite step $\nu$, then we have a finite decomposition $f=\sum_{0\le k\le \nu}f_k$ and $\eps_k^*=0$ for $k\ge \nu+1$.

\item  If the algorithm never terminates, then $\nu_N^*\to 0$ and $\sum_{0\le \nu\le N}f_\nu\to f$ in $L^2(S^1)$ as $N\to \infty$.
\end{itemize}

The algorithm yields some useful information when the decomposition algorithm is applied to nearly extremals. This is a consequence of Lemma \ref{prop-refinement-appliedto-near-extremal} and Lemma \ref{le-1st-rule}.
\begin{lemma}\label{le-2}
There exists a continuous function $\theta:(0,1]\to (0,\infty)$ such that for any $\eps>0$, there exists a $\delta>0$ such that for any nonnegative $\delta$-nearly extremal $f$ with $\|f\|_{L^2(S^1)}=1$. Then
$$\|f_\nu\|_2 \ge \theta(\|G_\nu\|_2)$$
for any index $\nu$ such that $\|G_\nu\|_2\ge \eps$.
\end{lemma}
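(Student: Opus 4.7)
The plan is to extract a lower bound on $\|f_\nu\|_2$ by combining the two tools already at hand: Lemma \ref{le-1st-rule}, which forces the residual $G_\nu$ itself to carry significant trilinear mass, and Proposition \ref{prop-refinement-appliedto-near-extremal}, which then extracts a concentrated piece $f_\nu$ from $G_\nu$. This is essentially the $S^1$ incarnation of the bookkeeping carried out in \cite{Christ-Shao:extremal-for-sphere-restriction-I-existence}.

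At step $\nu\ge 1$ of the algorithm, write $f = g_\nu + G_\nu$ where $g_\nu := \sum_{k<\nu} f_k$. The pieces $f_k$ and $G_\nu$ have pairwise disjoint supports by construction, so $\langle g_\nu, G_\nu\rangle = 0$. The step $\nu=0$ is handled separately: for $\delta$ sufficiently small, $f = G_0$ is itself so near-extremal that Proposition \ref{prop-refinement-appliedto-near-extremal} applied directly, with a universal parameter, yields $\|f_0\|_2 \ge c_0 > 0$. This both provides the value of $\theta(1)$ and guarantees $g_\nu\neq 0$ at every subsequent step, so that the hypothesis of Lemma \ref{le-1st-rule} is genuinely available.

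For $\nu\ge 1$ with $\|G_\nu\|_2\ge\eps$, apply Lemma \ref{le-1st-rule} to the splitting $f = g_\nu + G_\nu$. Choosing $\delta$ small enough that $C\delta^{1/2}\le \eps/2$ --- a constraint depending only on $\eps$, not on $\nu$ --- forces the maximum in \eqref{eq-29} to be realized by the trilinear term, giving
\[
\|G_\nu\sigma\ast G_\nu\sigma\ast G_\nu\sigma\|_2^{1/3} \gtrsim \|G_\nu\|_2^2.
\]
Passing to the Fourier side by Plancherel, this reads $\|\widehat{G_\nu\sigma}\|_6 \gtrsim \|G_\nu\|_2\cdot \mathcal{R}\|G_\nu\|_2$, so $G_\nu$ satisfies the hypothesis of Proposition \ref{prop-refinement-appliedto-near-extremal} with parameter $\delta'\sim \|G_\nu\|_2$. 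That proposition produces $G_\nu = f_\nu + G_{\nu+1}$ with
\[
\|f_\nu\|_2 \;\ge\; \eta_{\delta'}\|G_\nu\|_2 \;\gtrsim\; \bigl(\|G_\nu\|_2\bigr)^{M+1},
\]
where $M$ is the implicit polynomial exponent in $\eta_\delta\gtrsim \delta^M$ supplied by Proposition \ref{prop-refinement-appliedto-near-extremal}. Setting $\theta(t):=c\, t^{M+1}$, which is continuous on $(0,1]$ and strictly positive, yields the claim.

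The only delicate point is verifying that a single $\delta=\delta(\eps)$ works uniformly in $\nu$; this is automatic because the threshold $C\delta^{1/2}\le\eps/2$ is $\nu$-independent, and the step-dependent parameters $\eps_\nu^*$ from the algorithm enter only through the polynomial loss $\eta_{\delta'}\sim (\delta')^M$, which has already been absorbed into the definition of $\theta$. I do not expect any genuinely new difficulty --- the argument is a direct assembly of the ingredients already proved above.
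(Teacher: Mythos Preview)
Your proposal is correct and follows essentially the same route as the paper: choose $\delta$ so that $C\delta^{1/2}<\eps/2$, use Lemma~\ref{le-1st-rule} on the splitting $f=g_\nu+G_\nu$ to force the trilinear alternative, and then feed the resulting lower bound on $\|G_\nu\sigma\ast G_\nu\sigma\ast G_\nu\sigma\|_2$ into Proposition~\ref{prop-refinement-appliedto-near-extremal} to extract $\|f_\nu\|_2\ge\eta_{\delta'}\|G_\nu\|_2$ with $\delta'\sim\|G_\nu\|_2$. Your separate treatment of $\nu=0$ (where $g_0=0$ and Lemma~\ref{le-1st-rule} is unavailable) and your explicit remark on uniformity in $\nu$ are points the paper glosses over, so if anything your write-up is slightly more careful.
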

\begin{proof}Let $C$ be the exact constant appearing in Lemma \ref{le-1st-rule}. Given $\eps>0$. We choose $\delta>0$ such that $C\delta^{1/2}<\eps/2$. Then the second alternative in Lemma \ref{le-1st-rule} yields that
$$ \frac {\|G_\nu\|_2}{\|f\|_2} \le \frac {C\|G_\nu\sigma \ast G_\nu\sigma \ast G_\nu\sigma \|_2^{1/3}}{\|G_\nu\|_2},$$
that is to say,
$$\|G_\nu\sigma \ast G_\nu\sigma \ast G_\nu\sigma \|_2 \ge \left(C^{-1} \|G_\nu\|_2^3\right)\|G_\nu\|_2^3.$$
Then an application of Lemma \ref{prop-refinement-appliedto-near-extremal} yields that, there exists a function $\theta:(0,1]\to (0,\infty)$ such that
$$\|f_\nu\|_2\ge \eta_{\|G_\nu\|_2} \|G_\nu\|_2 =:\theta (\|G_\nu\|_2)$$
where $\eta$ is as in Lemma \ref{prop-refinement-appliedto-near-extremal} and $\theta(x)$ can be regarded as $O(x^{O(1)})$. \end{proof}
Moreover, if $f$ is nearly extremal, the norms of $f_\nu$ and $G_\nu$ enjoy upper bounds independent of $f$ for all except for large $\nu$.
\begin{lemma}\label{le-3}
There exists a sequence of positive constants $\gamma_\nu\to 0$ and a function $N: (0,
\frac 12 ]\to \mathbb{Z}^+$ satisfying $N(\delta)\to \infty$ as $\delta\to 0$ such that for any nonnegative $f\in L^2(S^1)$, if $f$ is $\delta$-nearly extremal then the quantities $\eps^*_\nu$ obtained when the decomposition algorithm is applied to $f$ satisfy
\begin{align}
&\label{eq-52} \|G_\nu\|_2\le \gamma_\nu \|f\|_2 \text{ for all } \nu\le N(\delta),\\
&\label{eq-53} \eps_\nu^*\le \gamma_\nu \text{ for all } \nu\le N(\delta),\\
&\label{eq-54} \|f_\nu\|_2 \le \gamma_\nu \|f\|_2 \text{ for all } \nu\le N(\delta).
\end{align}
\end{lemma}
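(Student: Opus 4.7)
The plan is to derive \eqref{eq-52} first and deduce the other two bounds from it. Since $f_\nu$ and $G_{\nu+1}$ are nonnegative with disjoint supports and $G_\nu = f_\nu + G_{\nu+1}$, we have $\|f_\nu\|_2 \le \|G_\nu\|_2$, so \eqref{eq-54} is immediate from \eqref{eq-52}. Combining \eqref{eq-51} with the Tomas--Stein bound $\|G_\nu\sigma\ast G_\nu\sigma\ast G_\nu\sigma\|_2\le \mathbf{S}^3\|G_\nu\|_2^3$ gives $\eps_\nu^*\le \|G_\nu\|_2/\|f\|_2$, so \eqref{eq-53} likewise follows from \eqref{eq-52}.

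For \eqref{eq-52}, apply Lemma \ref{le-1st-rule} to the orthogonal decomposition $f = \bigl(\sum_{k<\nu} f_k\bigr) + G_\nu$, which is valid because the summands have pairwise disjoint supports by construction. Inserting \eqref{eq-51} to replace $\|G_\nu\sigma\ast G_\nu\sigma\ast G_\nu\sigma\|_2^{1/3}$ by $2\eps_\nu^*\mathbf{S}\|f\|_2$ yields
\begin{equation*}
\|G_\nu\|_2 \le C\|f\|_2\max\bigl((\eps_\nu^*)^{1/2},\, \delta^{1/2}\bigr).
\end{equation*}
Thus it suffices to prove an $f$-independent decay estimate $\eps_\nu^*\le c_\nu$ with $c_\nu\to 0$. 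Observe that $\{\eps_\nu^*\}$ is automatically non-increasing, since the algorithm initializes $\eps_{\nu+1}=\eps_\nu^*$ and only ever halves. Hence a summability estimate suffices. Applying Proposition \ref{prop-refinement-appliedto-near-extremal} to $G_\nu$, and noting that \eqref{eq-51} forces $\|\widehat{G_\nu\sigma}\|_6 \ge \eps_\nu^*\mathcal{R}\|f\|_2 \ge \eps_\nu^*\mathcal{R}\|G_\nu\|_2$, produces $\|f_\nu\|_2 \ge \eta_{\eps_\nu^*}\|G_\nu\|_2 \gtrsim (\eps_\nu^*)^{O(1)}\|G_\nu\|_2$. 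Combined with $\|G_\nu\|_2 \ge \eps_\nu^*\|f\|_2$ (again from \eqref{eq-51}), this gives $\|f_\nu\|_2^2 \gtrsim (\eps_\nu^*)^{2+O(1)}\|f\|_2^2$. Summing over $\nu$ and using $\sum_\nu\|f_\nu\|_2^2 \le \|f\|_2^2$, we obtain $\sum_\nu (\eps_\nu^*)^{2+O(1)}\le C$, which together with the monotonicity of $\eps_\nu^*$ gives $\eps_\nu^* \le C'\nu^{-c}$ for some absolute $c>0$.

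Plugging this back into the bound from Lemma \ref{le-1st-rule} gives $\|G_\nu\|_2 \le C\max(\nu^{-c/2},\,\delta^{1/2})\|f\|_2$. Set $\gamma_0:=1$, $\gamma_\nu:=C\nu^{-c/2}$ for $\nu\ge 1$, and $N(\delta):=\lfloor \delta^{-1/c}\rfloor$; then $\delta^{1/2}\le \gamma_\nu$ whenever $1\le \nu\le N(\delta)$, so \eqref{eq-52} holds. The main obstacle I anticipate is careful bookkeeping of the implicit $O(1)$ exponents arising from the polynomial dependence of $\eta_\delta$ on $\delta$ in Proposition \ref{prop-refinement-appliedto-near-extremal} (which propagates through both the cube and the summability argument), but no new analytic input is needed beyond Lemmas \ref{le-1st-rule} and \ref{prop-refinement-appliedto-near-extremal}.
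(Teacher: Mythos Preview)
Your argument is correct and uses exactly the same ingredients as the paper's proof --- Lemma~\ref{le-1st-rule}, Proposition~\ref{prop-refinement-appliedto-near-extremal}, and the summability $\sum_\nu\|f_\nu\|_2^2\le\|f\|_2^2$ --- but organizes them more directly. The paper argues by contradiction: it fixes a slowly decaying sequence $\gamma_\nu$ satisfying $\nu\gamma_\nu^2\eta^2(c_0\gamma_\nu^3)>2$, supposes $\|G_\nu\|_2\ge\gamma_\nu$ for some $\nu\le N(\delta)$, and shows this forces $\|f_\mu\|_2^2\ge\eta^2(c_0\gamma_\nu^3)\gamma_\nu^2$ for all $\mu\le\nu$, contradicting $\sum_\mu\|f_\mu\|_2^2\le 1$. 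You instead isolate the $f$-independent statement $\eps_\nu^*\lesssim\nu^{-c}$ first (via the same summability argument combined with the monotonicity of $\eps_\nu^*$), and only then invoke Lemma~\ref{le-1st-rule} to translate this into a bound on $\|G_\nu\|_2$. This decoupling is cleaner: it separates the universal decay of the algorithm's thresholds from the $\delta$-dependent near-extremality hypothesis, and it yields an explicit polynomial rate $\gamma_\nu\sim\nu^{-c/2}$ rather than the paper's implicitly defined $\gamma_\nu$. One small cosmetic point: the inequality $\|\widehat{G_\nu\sigma}\|_6\ge\eps_\nu^*\mathcal{R}\|f\|_2$ is off by a harmless absolute constant coming from the Plancherel identity relating $\|\widehat{G_\nu\sigma}\|_6^3$ to $\|G_\nu\sigma\ast G_\nu\sigma\ast G_\nu\sigma\|_2$; this is absorbed into $\eta_{\eps_\nu^*}$ and does not affect the argument.
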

\begin{proof}The proof will be similar to that in \cite[Lemma 8.3]{Christ-Shao:extremal-for-sphere-restriction-I-existence}. We normalize $\|f\|_2=1$.
Since $\|f_\nu\|_2\le \|G_\nu\|_2$, and \eqref{eq-51} yields that
$$(\eps_\nu^*)^3 \mathbf{S}^3 \le \|G_\nu\sigma \ast G_\nu\sigma \ast G_\nu\sigma \|_2 \le \mathbf{S}^3 \|G_\nu\|_2^3,$$
we see that \eqref{eq-52} will imply both \eqref{eq-53} and \eqref{eq-54}. Hence we focus on proving \eqref{eq-52}. Let $\eta$ be the function appearing in Lemma \ref{prop-refinement-appliedto-near-extremal} and we know that $\eta(\delta)=O(\delta^{O(1)})$.

We first choose $\gamma_\nu$ such that it tends to zero so slowly that \begin{equation}\label{eq-55}
\nu \gamma_\nu^2\eta^2(c_0\gamma_\nu^3)>2 \text{ for all }\nu,
\end{equation} where $c_0$ will be clear below; it is possible since $ \eta(\delta)$ is in form of $O(\delta^{O(1)})$. Then given $\delta>0$, we choose $N(\delta)$ to be the least $\nu$ such that
$$ \gamma_\nu \le C\delta^{1/2},$$
where $C$ is the exact constant appearing in Lemma \ref{le-1st-rule}; we see that $N(\delta)\to \infty$ as $\delta\to 0$ and that $\gamma_\nu >C\delta^{1/2}$ for all $\nu\le N(\delta)$. Obviously the choices of $\gamma_\nu$ and $N$ are independent of $f$.

Now let $f$ and $\delta>0$ be given. If there were $\nu$ such that $\nu\le N(\delta)$ and $\|G_\nu\|_2 \ge \gamma_\nu$, then
$$ \|G_\nu \|_2\ge \gamma_\nu >C\delta^{1/2}.$$
Then Lemma \ref{le-1st-rule} yields that
 $$ \|G_\nu\|_2 \le \frac {C\|G_\nu\sigma \ast G_\nu\sigma \ast G_\nu\sigma \|_2}{ \|G_\nu\|_2}.$$
In other words, $$ \|G_\nu\sigma \ast G_\nu\sigma \ast G_\nu\sigma \|_2 \ge \left(c_0\|G_\nu\|_2^3\right)\|G_\nu\|_2^3\ge \left(c_0\gamma_\nu^3\right)\|G_\nu\|_2^3.$$
 where $c_0=C^{-1}$. Then Lemma \ref{prop-refinement-appliedto-near-extremal} yields that
 $$\|f_\nu\|^2_2\ge \eta^2(c_0\gamma_\nu^3)\|G_\nu\|_2^2 \ge \eta^2(c_0\gamma_\nu^3) \gamma_\nu^2.$$
Since $\|G_\mu\|_2\ge \|G_\nu\|_2$ for $\mu\le \nu$, there also holds that $\|G_\mu\|_2> C\delta^{1/2}$ for $\mu\le \nu$. So one may repeat the procedure above and find that
$$ \|f_\mu\|^2_2\ge \eta^2(c_0\gamma_\nu^3)\gamma_\nu^2 \text{ for } \mu\le \nu.$$
On the other hand, we have $\sum_{0\le \mu\le \nu} \|f_\mu\|_2^2 \le \|f\|_2^2=1$, which gives
$$ \sum_{\mu\le \nu } \eta^2(c_0\gamma_\nu^3)\gamma_\nu^2 \le 1, \Rightarrow \nu \gamma_\nu^2\eta^2(c_0\gamma_\nu^3) \le 1. $$
This is a contradiction to the choice of $\gamma_\nu$ in \eqref{eq-55}. So we finish the proof of this lemma.
\end{proof}

\subsection{A geometric property of the decomposition}
In the previous subsection, based on the single analytic fact Lemma \ref{prop-refinement-appliedto-near-extremal}, we have established that the $L^2$-norms of $f_\nu$ and $G_\nu$ obtained when the decomposition algorithm is applied to nearly extremals $f$ satisfy some uniform upper bounds as in Lemma \ref{le-2} and Lemma \ref{le-3}. On the other hand, in Lemma \ref{le-distant-interaction}, we have proved that ``distant caps interact weakly"; this will provide us some additional information on near extremals. We first recall a lemma in \cite[Lemma 9.1]{Christ-Shao:extremal-for-sphere-restriction-I-existence}, which follows easily from the pigeonhole principle.
\begin{lemma}\label{le-5}
In any metric space, for any $N$ and $r$, any finite set $S$ of cardinality $N$ and diameter equal to $r$ may be partitioned into two disjoint non-empty subsets $S=S_1\cup S_2$ such that the distance of $S_1$ and $S_2$ is no less than $r/2N$. Moreover, given two points $s_1,s_2\in S$ satisfying $\operatorname{distance}(s_1,s_2)=r$, this partition may be constructed such that $s_1\in S_1$ and $s_2\in S_2$.
\end{lemma}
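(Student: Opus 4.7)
\textbf{The plan} is to run a pigeonhole argument on the distances from $s_1$. Define $\phi: S \to [0,r]$ by $\phi(s) = d(s, s_1)$, so that $\phi(s_1)=0$ and $\phi(s_2)=r$ (the latter uses the hypothesis $d(s_1,s_2)=r$). Any threshold $t \in (0,r)$ yields a partition $S_1 = \{s : \phi(s) \le t\}$, $S_2 = \{s : \phi(s) > t\}$, and the triangle inequality converts a gap in $\phi(S)$ surrounding $t$ into a lower bound on the distance between $S_1$ and $S_2$. The task is therefore to locate a gap of width at least $r/(2N)$ that also separates $s_1$ from $s_2$.

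\textbf{Next} I would slice $(0,r]$ into the $N$ equal sub-intervals
\[
I_k = \bigl((k-1)r/N,\, kr/N\bigr], \qquad k=1,\dots,N.
\]
Since $\phi(s_2)=r \in I_N$, the last bin is automatically nonempty. On the other hand, the values $\{\phi(s) : s \in S\setminus\{s_1\}\}$ number only $N-1$ (with multiplicity) and lie in $(0,r]$, so by pigeonhole at least one of the $N$ bins $I_1,\dots,I_N$ receives no such value. Because $I_N$ contains $\phi(s_2)$, this empty bin must occur among $I_1,\dots,I_{N-1}$; fix such an index $k \le N-1$ and set $t = (k-1)r/N$. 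Then $s_1 \in S_1$ since $\phi(s_1)=0 \le t$, and $s_2 \in S_2$ since $t \le (N-2)r/N < r = \phi(s_2)$, so both subsets are nonempty.

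\textbf{Finally}, for any $x \in S_1$ and $y \in S_2$ I have $\phi(x) \le (k-1)r/N$; moreover $\phi(y) > (k-1)r/N$ by definition of $S_2$, and since $I_k$ contains no element of $\phi(S)$ this strengthens to $\phi(y) > kr/N$. The triangle inequality then gives
\[
d(x,y) \;\ge\; \phi(y) - \phi(x) \;>\; \tfrac{kr}{N} - \tfrac{(k-1)r}{N} \;=\; \tfrac{r}{N} \;\ge\; \tfrac{r}{2N},
\]
which is the claimed bound (in fact with the stronger constant $r/N$). The edge cases are trivial: for $N=1$ the statement is vacuous, and for $N=2$ one may simply take $S_1=\{s_1\}$, $S_2=\{s_2\}$ with separation $r \ge r/4$. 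I do not anticipate any substantive obstacle; the only place requiring care is the pigeonhole step, where one must use $\phi(s_2)=r \in I_N$ to push the empty bin into the range $k \le N-1$ so that $s_1$ and $s_2$ land on opposite sides of the threshold.
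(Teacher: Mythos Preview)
Your argument is correct and is precisely the pigeonhole argument the paper alludes to (the paper gives no details, merely citing the pigeonhole principle and the reference \cite{Christ-Shao:extremal-for-sphere-restriction-I-existence}). Your version even yields the sharper separation $r/N$ rather than $r/(2N)$, and the bookkeeping with $\phi(s_2)=r\in I_N$ to force the empty bin into $I_1,\dots,I_{N-1}$ is exactly the care needed to place $s_1$ and $s_2$ on opposite sides.
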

As a consequence, we have
\begin{lemma}\label{le-6}
For any $\eps>0$, there exists $\delta>0$ and $0< \lambda<\infty$ such that for any nonnegative $\delta$-nearly extremal $f$, the summands $f_\nu$ obtained from the decomposition algorithm and the associated caps $\mathcal{C}_\nu$ satisfy
\begin{equation}\label{eq-57}
\varrho(\mathcal{C}_j,\mathcal{C}_k) \le \lambda, \text{ whenever } \|f_j\|_2\ge \eps\|f\|_2 \text{ and }\|f_k\|_2\ge \eps\|f\|_2.
\end{equation}
\end{lemma}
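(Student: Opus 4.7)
I would argue by contradiction, following the template used in the analogous result for $S^2$ in \cite{Christ-Shao:extremal-for-sphere-restriction-I-existence}. Suppose the conclusion fails: for every $\lambda<\infty$ and every $\delta>0$ there is a normalized nonnegative $\delta$-nearly extremal $f$ and indices $j,k$ with $\|f_j\|_2,\|f_k\|_2\ge\eps$ but $\varrho(\mathcal{C}_j,\mathcal{C}_k)>\lambda$. The pairwise disjointness of supports of the $f_\nu$ forces at most $\eps^{-2}$ of them to have $L^2$-norm $\ge\eps$, and the monotonicity of $\|G_\mu\|_2$ together with $\|G_{N(\delta)}\|_2\le\gamma_{N(\delta)}\to 0$ (Lemma~\ref{le-3}) allows us, for $\delta$ small, to assume $j,k\le N:=N(\delta)$. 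Apply Lemma~\ref{le-5} to $\{[\mathcal{C}_1],\ldots,[\mathcal{C}_N]\}\subset(\mathcal{M},\varrho)$ with $[\mathcal{C}_j],[\mathcal{C}_k]$ as the distinguished pair at distance $>\lambda$: one obtains a partition $\{1,\ldots,N\}=S_1\sqcup S_2$ with $j\in S_1$, $k\in S_2$ and $\varrho(\mathcal{C}_\mu,\mathcal{C}_\nu)\ge\lambda/(2N)$ for all $\mu\in S_1,\nu\in S_2$. Set $g:=\sum_{\nu\in S_1}f_\nu$ and $h:=\sum_{\nu\in S_2}f_\nu$; then $f=g+h+G_{N+1}$ with $\|g\|_2^2+\|h\|_2^2\le 1$, $\min(\|g\|_2,\|h\|_2)\ge\eps$, and $\|G_{N+1}\|_2\le\gamma_N$.

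\textbf{Key estimate.} The triangle inequality, applied to the multinomial expansion of $f\sigma\ast f\sigma\ast f\sigma$ in terms of $g\sigma,h\sigma,G_{N+1}\sigma$, yields
\begin{equation*}
\|f\sigma\ast f\sigma\ast f\sigma\|_2\le\|g\sigma\ast g\sigma\ast g\sigma\|_2+\|h\sigma\ast h\sigma\ast h\sigma\|_2+\mathcal{E},
\end{equation*}
where $\mathcal{E}$ collects the mixed and tail terms. Tail terms (those involving $G_{N+1}$) are $O(\gamma_N)$ by the trilinear Tomas-Stein bound $\|f_1\sigma\ast f_2\sigma\ast f_3\sigma\|_2\le\mathbf{S}^3\prod_i\|f_i\|_2$. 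For each mixed term $\|g\sigma\ast g\sigma\ast h\sigma\|_2$ (or symmetric variant) I would use Plancherel, H\"older, Hausdorff-Young, and Tomas-Stein,
\begin{equation*}
\|g\sigma\ast g\sigma\ast h\sigma\|_2\le C\,\|\widehat{g\sigma}\,\widehat{h\sigma}\|_3\,\|\widehat{g\sigma}\|_6\le C\mathcal{R}\,\|g\|_2\,\|g\sigma\ast h\sigma\|_{3/2},
\end{equation*}
then expand $g\sigma\ast h\sigma=\sum_{\mu\in S_1,\nu\in S_2}f_\mu\sigma\ast f_\nu\sigma$ and apply the pointwise bounds $|f_\nu|\le C_\nu|\mathcal{C}_\nu|^{-1/2}\chi_{\mathcal{C}_\nu}$ from Proposition~\ref{prop-refinement-appliedto-near-extremal} together with the two-cap $L^{3/2}$ estimate $\|\chi_{\mathcal{C}_\mu}\sigma\ast\chi_{\mathcal{C}_\nu}\sigma\|_{3/2}\le\eps'|\mathcal{C}_\mu|^{1/2}|\mathcal{C}_\nu|^{1/2}$ established in the proof of Lemma~\ref{le-distant-interaction}, where $\eps'=\eps'(\lambda/N)\to 0$ as $\lambda\to\infty$. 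The result is $\mathcal{E}=o_\lambda(1)$ with $N$ and $\eps$ held fixed.

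\textbf{Contradiction via convexity.} An elementary calculation using $\|g\|_2^2+\|h\|_2^2\le 1$ and $\min(\|g\|_2,\|h\|_2)\ge\eps$ gives $\|g\|_2^3+\|h\|_2^3\le(1-\eps^2)^{3/2}+\eps^3=:1-c(\eps)<1$. Combined with the key estimate and the near-extremality bound,
\begin{equation*}
(1-\delta)^3\mathbf{S}^3\le\mathbf{S}^3\bigl(1-c(\eps)\bigr)+o_\lambda(1)+O(\gamma_N),
\end{equation*}
which is a contradiction on choosing $\lambda$ large and then $\delta$ small.

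\textbf{Main obstacle.} The principal technical hurdle is the estimate $\mathcal{E}=o_\lambda(1)$: the constants $C_\nu$ from Proposition~\ref{prop-refinement-appliedto-near-extremal} satisfy only $C_\nu\lesssim(\eps_\nu^*)^{-O(1)}$, and $\eps_\nu^*$ can become very small for intermediate indices, so the naive bound $\sum_{\mu,\nu}C_\mu C_\nu$ need not be finite. The resolution, following \cite{Christ-Shao:extremal-for-sphere-restriction-I-existence}, is to pre-absorb into the remainder $G_{N+1}$ all summands $f_\nu$ of very small $L^2$-norm before the partitioning step: the surviving summands satisfy $\eps_\nu^*\gtrsim\eps^{O(1)}$ by Lemma~\ref{le-1st-rule} applied to the orthogonal decomposition $G_\nu=f_\nu+G_{\nu+1}$, giving $C_\nu$ bounded in terms of $\eps$ only. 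This quantitative interplay between Proposition~\ref{prop-refinement-appliedto-near-extremal}, Lemma~\ref{le-3}, and Lemma~\ref{le-distant-interaction} is the main technical content.
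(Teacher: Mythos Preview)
Your approach is essentially the same as the paper's: argue by contradiction, use Lemma~\ref{le-5} to partition the significant summands into two well-separated groups, bound the cross terms via Lemma~\ref{le-distant-interaction}, and obtain a contradiction from the strict concavity of $t\mapsto t^3$ (the paper uses sixth powers and the bound $\|F_1\|_2^6+\|F_2\|_2^6\le(1-\eps^4)$, but this is the same mechanism). Your route to the cross-term bound via Plancherel, H\"older, Hausdorff--Young and the two-cap $L^{3/2}$ estimate is exactly how the paper derives Lemma~\ref{le-distant-interaction} itself, so there is no real difference there.

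One correction to your handling of the ``main obstacle.'' You take $N=N(\delta)$, which grows with $\delta^{-1}$; this makes the separation $\lambda/(2N)$ and the count of cross terms depend on $\delta$, not just $\eps$, and the argument does not close. Your proposed fix---pre-absorbing all summands $f_\nu$ of small $L^2$ norm---does not quite work either, since such summands need not form a tail and their total $L^2$ mass need not be small. The paper's resolution is cleaner: choose $N$ to be the \emph{least} index with $\|G_{N+1}\|_2<\eps^3$. Then (i) $j_0,k_0\le N$ since $\|G_{j_0}\|_2\ge\|f_{j_0}\|_2\ge\eps>\eps^3$; (ii) Lemma~\ref{le-3} gives $N\le M_\eps$ for $\delta$ small, so $N$ depends only on $\eps$; (iii) for every $\nu\le N$ one has $\|G_\nu\|_2\ge\eps^3$, whence Lemma~\ref{le-1st-rule} (applied to $f=(\sum_{k<\nu}f_k)+G_\nu$, not to $G_\nu=f_\nu+G_{\nu+1}$) and the algorithm give $\eps_\nu^*\gtrsim\eps^{O(1)}$ and thus $C_\nu\lesssim\eps^{-O(1)}$ uniformly. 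With this choice of $N$ everything in your sketch goes through.
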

\begin{proof} We follow the proof from \cite{Christ-Shao:extremal-for-sphere-restriction-I-existence}. We normalize $\|f\|_2=1$ and let $\eps$ be given; also suppose that $\|f_{j_0}\|_2, \|f_{k_0}\|_2\ge \eps$. Let $N$ be the smallest integer such that $\|G_{N+1}\|_2 <\eps^3$. This choice of $N$ may depend on $f$ but it will not affect our final choice of $\lambda$. Since $\|f_\nu\|_2 \le \|G_\nu\|_2$ and $\|G_\nu\|_2$ is a non-increasing function of $\nu$, we see that $j_0,k_0\le N$. Moreover Lemma \ref{le-3} yields that there exists a $M_\eps$ which depends only on $\eps$ such that $N\le M_\eps$. If we choose $\delta$ to be sufficiently small but depending on $\eps$, we see that Proposition \ref{prop-refinement-appliedto-near-extremal} yields that $f_\nu \le \theta(\eps)|\mathcal{C}_\nu|^{-1/2} \chi_{\mathcal{C}_\nu\cup -\mathcal{C}_\nu}$, where $\theta$ is a continuous, strictly positive function on $(0,1]$. For those $\nu\le N$, we have $\|G_\nu\|_2 \ge \eps^3$.

Now let $0<\lambda<\infty$ to be a large quantity to be specified. It suffices to show that if $\delta(\eps)$ is sufficiently small, an assumption that $\varrho(\mathcal{C}_j,\mathcal{C}_k)>\lambda$ would lead to a contradiction that $f$ is a $\delta$-nearly extremal.

We apply the previous Lemma \ref{le-5} to see that
$$F=F_1+F_2:=\sum_{\nu\in S_1} f_\nu+ \sum_{\nu\in S_2} f_\nu,$$
where $[0,N] =S_1\cup S_2$, $j_0\in S_1$ and $k_0\in S_2$; also we have $\varrho(\mathcal{C}_j,\mathcal{C}_k)\ge \frac {\lambda}{2N} \ge \frac {\lambda}{2M_\eps}$ for all $j\in S_1, k\in S_2$.  For $i,j\in \{1,2\} $ and $i\neq j$,
$$ \|F_i\sigma \ast F_i\sigma \ast F_j\sigma \|_2 \le \sum_{j\in S_1, k\in S_2} \|f_j\sigma \ast f_j\sigma \ast f_k\sigma \|_2 \le M_\eps^3 \gamma(\frac {\lambda}{2M_\eps})\theta^3(\eps),$$
where $\gamma(t)\to0$ as $t\to \infty$ as in Lemma \ref{le-distant-interaction}.
Therefore,
\begin{equation}\label{eq-58}
\begin{split}
&\|F\sigma \ast F\sigma \ast F\sigma \|_2^2 \\
&\le \|F_1\sigma \ast F_1\sigma \ast F_1\sigma \|_2^2 +\|F_2\sigma \ast F_2\sigma \ast F_2\sigma \|_2^2+\sum_{i\neq j, \atop i,j\in \{1,2\}} \|F_i\sigma \ast F_i\sigma \ast F_j\sigma \|_2^2 \|f\|^3_2\\
&\le \mathbf{S}^6 \left(\|F_1\|_2^6+\|F_2\|_2^6\right)+CM_\eps^3\gamma(\frac {\lambda}{2M_\eps})\theta^3(\eps)\\
&\le \mathbf{S}^6 \left(\|F_1\|_2^2+\|F_2\|_2^2\right)\max\{\|F_1\|_2^4,\|F_2\|_2^4\}+CM_\eps^3\gamma(\frac {\lambda}{2M_\eps})\theta^3(\eps)\\
&\le \mathbf{S}^6 (1-\eps^4)+CM_\eps^3\gamma(\frac {\lambda}{2M_\eps})\theta^3(\eps),
\end{split}
\end{equation}where we have used $\|F_1\|_2\ge \eps$ and $\|F_2\|_2\ge \eps$ in passing to the last inequality.
On the other hand,
\begin{equation}\label{eq-59}
\begin{split}
(1-\delta)^6\mathbf{S}^6 &\le \|f\sigma \ast f\sigma \ast f\sigma \|_2^2\le \|F\sigma \ast F\sigma \ast F\sigma \|_2^2+C\|f\|_2^4\|f-F\|_2^2\\
&\le \|F\sigma \ast F\sigma \ast F\sigma \|_2^2+C\eps^6.
\end{split}
\end{equation}
So from \eqref{eq-58} and \eqref{eq-59}, we see that
\begin{equation}\label{eq-60}
(1-\delta)^6\mathbf{S}^6 \le C\eps^6 +\mathbf{S}^6(1-\eps^4)+CM_\eps^3\gamma(\frac {\lambda}{2M_\eps})\theta^3(\eps).
\end{equation}
Recall that $\gamma(t) \to 0$ as $t\to\infty$. So given $\eps>0$ which is small, if we chose a sufficiently small $\delta=\delta(\eps)$, then \eqref{eq-60} will result in a contradiction if $\lambda$ is allowed to be sufficiently large. Hence the conclusion of the lemma follows.
\end{proof}

\subsection{Upper bounds for extremizing sequences}
In this subsection, we prove Proposition \ref{prop-prelimi-decomp-near-extremals}.
\begin{lemma}\label{le-7}
There exists a function $\Theta: [1,\infty)\to (0,\infty)$ satisfying $\lim_{R\to \infty} \Theta(R)=0$ such that the following holds: given any $\eps>0$ and $\bar{R}>0$, there exists $\delta>0$ such that for any nonnegative $\delta$-nearly extremal $f$ with $\|f\|_2=1$, we have a decomposition
$$f=F+G$$
where $F$ and $G$ are even and nonnegative with disjoint supports. Moreover this decomposition satisfies
$ \|G\|_2\le \eps$ and there exists a cap $\mathcal{C}=\mathcal{C}(z,r)$ such that for any $R\in [1,\bar{R}]$, we have \begin{align}
&\label{eq-61} \int_{\min\{|x+z|,|x-z|\}\ge Rr} F^2(x)d\sigma(x) \le \Theta(R), \\
&\label{eq-62} \int_{\{F(x)\ge Rr^{-1/2} \}} F^2(x) d\sigma(x) \le \Theta(R).
\end{align}
\end{lemma}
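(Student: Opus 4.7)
The plan is to apply the decomposition algorithm from the previous subsection to $f$, keep only the summands with appreciable $L^2$ norm to form $F$, and exploit Lemma~\ref{le-6} to confine all their associated caps to a bounded neighborhood of a single cap $\mathcal{C}$. The two desired controls on $F$ will then follow from the pointwise bounds provided by Proposition~\ref{prop-refinement-appliedto-near-extremal}, summed over a finite index set.

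I first apply the algorithm to the even nonnegative $\delta$-nearly extremal $f$, yielding $f = \sum_\nu f_\nu + G_{N+1}$ with each $f_\nu$ even, nonnegative, pairwise orthogonal, supported in $\mathcal{C}_\nu \cup (-\mathcal{C}_\nu)$, and pointwise dominated by $C_{\eps_\nu^*}\|G_\nu\|_2|\mathcal{C}_\nu|^{-1/2}\chi_{\mathcal{C}_\nu \cup -\mathcal{C}_\nu}$ via Proposition~\ref{prop-refinement-appliedto-near-extremal}. Given $\eps > 0$ and $\bar R \ge 1$, I pick a threshold $\eta > 0$ (to be specified) and use Lemma~\ref{le-3}: for $\delta$ sufficiently small, there is $\nu_0 \le N(\delta)$ with $\|G_{\nu_0}\|_2 \le \gamma_{\nu_0} \le \eta$. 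Setting $T = \{\nu < \nu_0 : \|f_\nu\|_2 \ge \eta\}$, $F = \sum_{\nu \in T} f_\nu$, and $G = f - F$, orthogonality gives $\|G\|_2^2 \le (\nu_0 + 1)\eta^2$, so taking $\eta$ small in terms of $\eps$ and $\nu_0$ ensures $\|G\|_2 \le \eps$.

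Next, I apply Lemma~\ref{le-6} at threshold $\eta$ (shrinking $\delta$ further if necessary) to obtain $\lambda(\eta) < \infty$ such that $\varrho(\mathcal{C}_\nu, \mathcal{C}_{\nu'}) \le \lambda(\eta)$ for every pair $\nu, \nu' \in T$. Fix $\nu_* \in T$ and set $\mathcal{C} := \mathcal{C}_{\nu_*} = \mathcal{C}(z, r)$. Using the evenness of $f_\nu$ to replace $\mathcal{C}_\nu$ by $-\mathcal{C}_\nu$ whenever the minimum in the definition of $\varrho$ is realized by the antipodal representative, the distance bound yields $r/\lambda \le r_\nu \le \lambda r$ and $|z_\nu - z| \le \lambda r$ for every $\nu \in T$. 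Consequently, the support of $F$ is contained in $\mathcal{C}(z, C\lambda r) \cup \mathcal{C}(-z, C\lambda r)$, and summing the pointwise bounds over the finite set $T$ gives $F \le C'(\eta) r^{-1/2}$ almost everywhere.

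These two properties yield the claimed bounds directly: the integrand in \eqref{eq-61} vanishes once $R > C\lambda(\eta)$, and the superlevel set in \eqref{eq-62} is empty once $R > C'(\eta)$; in the remaining range the trivial bound $\|F\|_2^2 \le 1$ suffices. To produce a single universal $\Theta$, I fix $\Theta:[1,\infty)\to(0,\infty)$ to be any concrete function tending to zero but staying above $1$ on a long initial interval $[1, R_0]$ (for instance $\Theta(R) = K$ for $R \le R_0$ and $\Theta(R) = KR_0/R$ afterward), and then choose $\delta = \delta(\eps, \bar R)$ small enough that $\eta$ can be taken with both $\sqrt{\nu_0+1}\,\eta \le \eps$ and $C\lambda(\eta),\, C'(\eta) \le R_0$. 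The principal obstacle is precisely this calibration: $\eta$ must be small enough to force $\|G\|_2 \le \eps$, yet not so small that $\lambda(\eta)$ (which grows as $\eta \downarrow 0$) overshoots $R_0$. This is reconciled by fixing $\Theta$ with sufficiently mild decay and exploiting the freedom in $\delta$, which permits both constraints to be met simultaneously.
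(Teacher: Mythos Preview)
Your argument has a genuine gap in the final calibration step. The function $\Theta$---and in particular your threshold $R_0$---must be fixed \emph{universally}, before $\eps$ and $\bar R$ are given. But your single threshold $\eta$ is pulled in two incompatible directions: to make $\|G\|_2\le\eps$ you need $\eta\to 0$ as $\eps\to 0$, while to keep $C\lambda(\eta)\le R_0$ you need $\eta$ bounded below (since $\lambda(\eta)\to\infty$ as $\eta\downarrow 0$, by the very nature of Lemma~\ref{le-6}). No amount of ``freedom in $\delta$'' helps here: in Lemma~\ref{le-6} the quantity $\lambda$ is determined by the threshold, not by $\delta$. So for all sufficiently small $\eps$ your construction simply produces no admissible $\eta$. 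There is also a secondary issue: your bound $\|G\|_2\le\sqrt{\nu_0+1}\,\eta$ may fail to tend to zero, because the sequence $\gamma_\nu$ in Lemma~\ref{le-3} is deliberately chosen to decay extremely slowly, so $\nu_0(\eta)$ can grow much faster than $\eta^{-2}$.

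The paper avoids this by decoupling the two tasks. It first fixes $M=M(\eps)$ so that $F:=\sum_{\nu\le M}f_\nu$ already satisfies $\|f-F\|_2\le\eps$; this uses only Lemma~\ref{le-3} and costs nothing in terms of cap geometry. Then, for each $R$, it introduces a further sub-sum $\mathcal{F}=\mathcal{F}_{N}$ with an $R$-dependent threshold $\eta(R)$: here $N$ is the first index with $\|f_N\|_2<\eta(R)$, so Lemma~\ref{le-6} applies to all retained summands and gives the sharp support/height bounds (the relevant integrals for $\mathcal F$ actually vanish). The residual $\|F-\mathcal{F}\|_2$ is controlled by $\|G_{N+1}\|_2$, which---via Lemma~\ref{le-2}---is bounded by a quantity $\gamma(\eta(R))$ that tends to zero with $\eta(R)$. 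One then \emph{defines} $\Theta(R):=\gamma(\eta(R))$, choosing $\eta(R)\to 0$ slowly enough that $R\ge C\lambda(\eta(R))$ and $R\ge C(\eta(R))\lambda(\eta(R))$. This is why $\delta$ ends up depending on $\bar R$: one needs Lemma~\ref{le-2} and Lemma~\ref{le-6} to hold at all the thresholds $\eta(R)$ for $R\in[1,\bar R]$, a compact range on which $\eta(R)$ is bounded away from zero. The key idea you are missing is precisely this intermediate, $R$-dependent truncation $\mathcal{F}$; a single $\eta$ cannot serve both purposes.
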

Let us postpone the proof of this lemma; now we prove Proposition \ref{prop-prelimi-decomp-near-extremals} by using it. \begin{proof}[Proof of Proposition \ref{prop-prelimi-decomp-near-extremals} from Lemma \ref{le-7}] Let $\eps$ and $f$ be given. We assume that the $\Theta$ given by Lemma \ref{le-7} is a continuous, strictly decreasing function. Define $\bar{R}=\bar{R}(\eps)$ by the equation $\Theta(\bar{R})=(\eps/2)^2$. Let $\mathcal{C}=\mathcal{C}(z,r)$ and $\delta=\delta(\eps, \bar{R}(\eps))$ along with $F$ and $G$ satisfy the conclusions in Lemma \ref{le-7}. We re-define
\begin{equation}\label{eq-63}
f=((1-\chi) F) +(\chi F +G)=:\tilde{F}+\tilde{G} ,
\end{equation} where $\chi(x)=1$ if $\min\{|x-z|,|x+z|\} \ge \bar{R}r$ or $F(x)\ge \bar{R} r^{-1/2}$. Then it easily follows that
\begin{equation}\label{eq-64}
\|\tilde{G}\|_2 \le \|G\|_2+ \|\chi F\|_2 \le \eps+2\times \eps/2 =2\eps.
\end{equation}
On the other hand, we also have
\begin{equation}\label{eq-65}
\begin{split}
\int_{\min\{|x+z|,|x-z|\}\ge Rr} \tilde{F}^2(x)d\sigma(x) &\le \Theta(R), \\
\int_{\{\tilde{F}(x)\ge Rr^{-1/2} \}} \tilde{F}^2(x) d\sigma(x) &\le \Theta(R).
\end{split}
\end{equation}
Indeed, when $R\le \bar{R}$, $\tilde{F}\le F$,
\begin{equation}\label{eq-66}
\begin{split}
\int_{\min\{|x+z|,|x-z|\}\ge Rr} \tilde{F}^2(x)d\sigma(x) &\le\int_{\min\{|x+z|,|x-z|\}\ge Rr} F^2(x)d\sigma(x) \le \Theta(R), \\
\int_{\{\tilde{F}(x)\ge Rr^{-1/2} \}} \tilde{F}^2(x) d\sigma(x) &\le \int_{\{F(x)\ge Rr^{-1/2} \}} F^2(x) d\sigma(x)\le  \Theta(R);
\end{split}
\end{equation}
when $R\ge \bar{R}$, from the support information of $\chi$,
\begin{equation}\label{eq-67}
\begin{split}
\int_{\min\{|x+z|,|x-z|\}\ge Rr} \tilde{F}^2(x)d\sigma(x) &\le \int_{\min\{|x+z|,|x-z|\}\ge \bar{R}r} (1-\chi)F^2(x)d\sigma(x)=0, \\
\int_{\{\tilde{F}(x)\ge Rr^{-1/2} \}} \tilde{F}^2(x) d\sigma(x) &\le \int_{\{F(x)\ge \bar{R}r^{-1/2} \}} (1-\chi)F^2(x) d\sigma(x) =0.
\end{split}
\end{equation}Hence the proof of Proposition \ref{prop-prelimi-decomp-near-extremals} is complete if we assume Lemma \ref{le-7}.
\end{proof}
We are left with proving Lemma \ref{le-7}.
\begin{proof}[Proof of Lemma \ref{le-7}] Let $\eps>0$ and $f\ge 0$ be given with $\|f\|_2=1$ and also $R\in [1,\bar{R}]$. Let $\{f_\nu, G_\nu\}$ be the pairs obtained from the decomposition algorithm. Choose $\delta=\delta(\eps)$ sufficiently small and $M=M(\eps)$ sufficiently large such that
\begin{equation}
\begin{split}
& \|G_{M+1}\|_2 \le \eps/2,\\
& f_\nu, G_\nu \text{ satisfy the conclusions in Lemma \ref{le-3} for all }\nu\le M.
\end{split}
\end{equation}
Set $F=\sum_{0\le \nu\le M} f_\nu$. Then $\|f-F\|_2 =\|G_{M+1}\|_2\le \eps/2$.  Let $\eta: [1,\infty)\to (0,\infty)$ be a function to be chosen in the end of the proof satisfying that $\eta(t)\to 0$ as $t\to\infty$. This function will not depend on $\bar{R}$.

Let $A(\eta):=\inf\{\nu: \|f_\nu\|_2 <\eta\}$. Then set $N:=\min\{M, A(\eta)\}$. Clearly from the upper bound on $A(\eta)$, $N$ is majorized by a quantity depending only on $\eta$ by Lemma \ref{le-3}. Set $\mathcal{F}=\mathcal{F}_N :=\sum_{0\le \nu\le N} f_\nu$. Then it follows from Lemma \ref{le-3} that
\begin{equation}\label{eq-68}
\|F-\mathcal{F}\|_2 \le \gamma(\eta),
\end{equation}where $\gamma(\eta)\to 0$ as $\eta\to 0$. The function $\eta$ is independent of $\epsilon$ and $\bar{R}$.

Let $\mathcal{C}_0=\mathcal{C}_0(z_0,r_0)$ be the cap associated to $f_0$ in the decomposition of $f$, and $\mathcal{C}_0$ will be the desired cap in Lemma \ref{le-7}. Then we need to find a function $\Theta$ to guarantee that both \eqref{eq-61} and \eqref{eq-62} hold; in this process, we need to choose a suitable function $\eta$. Suppose that the functions $R\mapsto \eta(R)$ and $R\mapsto \Theta(R)$ are chosen such that
\begin{equation}\label{eq-69}
\begin{split}
& \eta(R) \to 0, \text{ as } R\to \infty,\\
& \gamma(\eta(R))\le \Theta(R), \text{ for all } R.
\end{split}
\end{equation}
Then by \eqref{eq-68}, $F-\mathcal{F}$ satisfies the desired estimates \eqref{eq-61} and \eqref{eq-62}. Then it suffices to show that \begin{align}
&\label{eq-70} \mathcal{F}(x)=0, \text{ whenever }  \min\{|x+z_0|, |x-z_0|\} \ge Rr_0, \\
&\label{eq-71} \|\mathcal{F}\|_\infty \le Rr_0^{-1/2}.
\end{align}
Before proving \eqref{eq-70}, we recall several facts. Firstly each summand $f_k\le C(\eta)|\mathcal{C}_k|^{-1/2} \chi_{\mathcal{C}_k\cup -\mathcal{C}_k}$ where $C(\eta)<\infty$ depends only on $\eta$, and $f_k$ is supported by $\mathcal{C}_k\cup -\mathcal{C}_k$. Moreover, for all $k\le N$, $\|f_k\|_2 \ge \eta$ by the definition of $N$. Then an application of Lemma \ref{le-6} implies that there exists a function $\eta\mapsto \lambda(\eta) $ such that, if $\delta$ is sufficiently small as a function of $\eta$, we have $\varrho(\mathcal{C}_k, \mathcal{C}_0) \le \lambda(\eta)$ for all $k\le N$. This is needed for $\eta=\eta(R)$ for all $R$ in the compact set $[1,\bar{R}]$ so that $\delta$ can be chosen as a function of $\bar{R}$ alone. Hence $\delta$ may be chosen as a function of $\bar{R}$ in addition to the previous dependence on $\eps$.

We are ready to prove \eqref{eq-70}. Given $x\in S^1$ with $\min\{|x-z_0|, |x+z_0|\} \ge Rr_0$, either $f_k(x) =0$ or $\mathcal{C}_k$ has radius  $\ge \frac 14 Rr_0$, or the center $z_k$ of $\mathcal{C}_k$ satisfies that $\min\{|z_k+z_0|,|z_k-z_0|\} \ge \frac 14 R r_0$. In the latter two cases, there always holds that $\varrho(\mathcal{C}_k,\mathcal{C}_0)\ge CR$. So
\begin{equation}\label{eq-72}
R\le C \lambda(\eta(R)).
\end{equation}
This is a contradiction to the choice of $\eta$ if $\eta(R)\to 0$ slowly enough as $R\to \infty$. Then we have $\mathcal{F}(x)\equiv 0$ when $\min\{|x+z|, |x-z|\}\ge Rr_0$. With the choice of $\eta$, $\Theta$ can be defined by
\begin{equation}\label{eq-73}
\Theta(R):=\gamma(\eta(R)).
\end{equation}Then \eqref{eq-61} holds for all $R\in [1,\bar{R}]$.

Next we prove \eqref{eq-71}. We claim that $\|\mathcal{F}\|_\infty \le Rr_0^{-1/2}$ if $R$ is taken sufficiently large as a function of $\eta$. Indeed, because the summands $f_k$ have pairwise disjoint supports, it suffices to control $\max_{k\le N} \|f_k\|_\infty$. For this, Lemma \ref{prop-refinement-appliedto-near-extremal} implies that
$$\|f_k\|_\infty \le C(\eta) r^{-1/2}_k, C(\eta)=O(\eta^{-O(1)}).$$
If $\eta(R)$ is chosen to go to zero sufficiently slowly to ensure that $C(\eta(R))\lambda(\eta(R))<R$ for all $k\le N$, then \eqref{eq-62} holds provided that $\Theta$ is defined as in \eqref{eq-73}. Indeed, given any $k\le N$, $\|f_k\|_\infty\le Rr_0^{-1/2}$ would follow if $C(\eta(R))r_k^{-1/2} \le Rr_0^{-1/2}$; then it reduces to show that
\begin{equation}\label{eq-74}
 C(\eta(R))\lambda(\eta(R)) \le R
 \end{equation}
However \eqref{eq-74} is guaranteed if we choose $\eta(R)\to 0$ sufficiently slow as $R\to \infty$.

Finally $\eta$ must be chosen to tend to zero slowly enough to satisfy the requirements in \eqref{eq-72} and \eqref{eq-74}. With this choice of $\eta$, the proof of Lemma \ref{le-7} is complete.
\end{proof}

\section{Step 5. Ruling out small caps and existence of extremals}\label{sec:step5}
This step aims to establish the Proposition \ref{prop-regularity-after-rescaling}. We split the proof into 3 subsections, \ref{sec-key-prop}, \ref{sec-rule-out-small} and \ref{sec-big-caps}. In Subsection \ref{sec-key-prop}, we prove two propositions, one on the decomposition of $\widehat{f_\nu\sigma}$ into ``profiles", and the other on orthogonality of such profiles. Then in Subsection \ref{sec-rule-out-small}, we rule out the ``small caps" case where $\lim_{\nu\to\infty}r_\nu=0$ with the additional information that $\mathcal{R}>(5/2)^{1/6}\mathcal{R}_{\textbf{P}}$. Then we are left with ``large caps" case, i.e., where $\inf_\nu r_\nu >0$. In Subsection \ref{sec-big-caps}, we show that an extremal is obtained for \eqref{eq-1}.

Let $\{f_\nu\}$ be an even nonnegative extremizing sequence, uniformly upper even normalized with respect to the caps $\{\mathcal{C}_\nu\cup (-\mathcal{C}_\nu)\}$. Without loss of generality, we may assume that $\mathcal{C}_\nu$ is supported on the upper hemisphere of $S^1$, $S^1_+:= \{y\in S^1:~y\cdot (0,1)>0\}$. The sequence $\{f_\nu\}$ satisfies that $\|f_\nu\|_{L^2(S^1)} =1$. Suppose that $\inf_\nu r_\nu=0$. Then up to a subsequence, we may assume that $\lim_{\nu\to\infty}r_\nu=0.$  

Decompose $$2^{1/2}f_\nu(x) = f_\nu^{+}(x)+ f_\nu^+(-x)+f_\nu^\frak{b}(x),$$
where $f_\nu^+$ is real, $f_\nu^+$ is supported on $\mathcal{C}(z_\nu, r_\nu^{1/2})$, and $\|f_\nu^\frak{b}\|_{L^2}\to 0$ as $\nu\to \infty$.
 
Set $$g_\nu:=\phi_\nu^*(f^+_\nu)=r_\nu^{1/2} f^+_\nu(\phi_{\mathcal{C}_\nu})/(1-r_\nu^2y^2)^{\frac 14},$$ where $\phi^*_\nu$ is the rescaling map associated to $\mathcal{C}_\nu\cup (-\mathcal{C}_\nu)$. It is not hard to see that $g_\nu$ is upper normalized with respect to the unit ball $\mathbb{B}\subset \R$, i.e.,
\begin{equation}\label{eq-46}
\begin{split}
& g_\nu\ge 0,\\
& \|g_\nu\|_2\to 1, \text{ as }\nu\to\infty,\\
& \int_{|x|\ge R} |g_\nu|^2 dx \le \Theta(R),\,\forall R\ge 1,\\
& \int_{g_\nu \ge R} |g_\nu|^2dx \le \Theta(R),\,\forall R\ge 1,\\
& \Theta(R) \to 0, \text{ as }R\to \infty.
\end{split}
\end{equation}

Since $f_\nu$ is even and $\|f_\nu\|_{L^2}=1$, we have $\|f_\nu^+\|_{L^2}\to 1$ as $\nu\to \infty$. Moreover the function $F_\nu :=f^+_{\nu} (x)+f_\nu^{+}(-x)$ satisfies 
$$ \frac {\|F_\nu\sigma *F_\nu\sigma *F_\nu\sigma\|_{L^2}^2}{\|F_\nu\|_{L^2}^6}= \frac {20}{8} \frac {\|f_\nu^+\sigma *f_\nu^+\sigma *f_\nu^+\sigma\|_{L^2}^2}{\|f_\nu^+\|_{L^2}^6}.$$
So we have 
\begin{equation}\label{eq-c1}
 \limsup_{\nu\to \infty} \|f_\nu\sigma *f_\nu\sigma *f_\nu\sigma \|_{L^2}^2 = \frac {5}{2} \limsup_{\nu\to \infty} \|f_\nu^+\sigma *f_\nu^+\sigma *f_\nu^+\sigma \|_{L^2}^2. 
 \end{equation}

We will show that the right hand side of \eqref{eq-c1} is less than $\frac {5}{2}\textbf{P}^6$ by developing a profile decomposition for $f_\nu^+$. For simplicity of notations, in the subsection \ref{sec-key-prop} below, we will write $f_\nu$ as $f_\nu^+$.  

\subsection{Key propositions}\label{sec-key-prop}
Recall that we write $f_\nu^+$ as $f_\nu$ and $f_\nu^+$ is supported on $\mathcal{C}(z_\nu,r_\nu^{1/2})$. By rotation invariance, we may assume that $z_\nu=(0,1)$ for all $\nu$.
The decomposition for $\widehat{f_\nu \sigma}$ is motivated by the rescaling relation,
\begin{equation}\label{eq-b5}
\begin{split}
&\left|\widehat{f_\nu \sigma}(x,t)\right|=\left|\int_{\mathcal{C}_\nu} e^{i(x,t)\cdot\xi}f_\nu(\xi)d\sigma(\xi)\right|\\
&=\left|\int_{|y|\le 1/2} e^{ixy+it\sqrt{1-y^2}}\frac {f_\nu(y,\sqrt{1-y^2})}{\sqrt{1-y^2}}dy\right|\\
&=\left|r_\nu^{1/2} \int e^{ir_\nu xy+i t\sqrt{1-r_\nu^2 y^2}}\frac {r_\nu^{1/2} f_\nu(r_\nu y,\sqrt{1-r_\nu^2 y^2})}{\sqrt{1-r_\nu^2 y^2}}dy\right|\\
&=\left|r_\nu^{1/2} \int e^{ir_\nu xy-\frac {itr_\nu^2 y^2}{2}} e^{ir_\nu^2 t\bigl(\frac {\sqrt{1-r_\nu^2 y^2}-1}{r_\nu^2}+
\frac {y^2}{2}\bigr)}\frac {1}{(1-r_\nu^2 y^2)^{\frac 14}}\frac { r_\nu^{1/2}f_\nu(r_\nu y,\sqrt{1-r_\nu^2 y^2}) }{(1-r_\nu^2y^2)^{\frac 14}}dy\right|\\
&=\left|r_\nu^{1/2} e^{\frac {ir_\nu^2 t \Delta}{2}}\bigl(h_\nu(r_\nu^2 t,y) g_\nu(y)\bigr)(r_\nu x)\right|,
\end{split}
\end{equation}where
\begin{equation}\label{eq-32}
h_\nu(t,y):=e^{it\left( \frac {\sqrt{1-r_\nu^2|y|^2}-1}{r_\nu^2}+\frac {|y|^2}{2}\right)}.
\end{equation}
So a decomposition for $\widehat{f_\nu \sigma}$ immediately follows once we have a decomposition for $\{g_\nu\}$. Set $h_\nu^{-1}=1/h_\nu$.

\begin{proposition}\label{prop-decomp}
Let $\{g_\nu\}$ and $\{h_\nu\}$ be defined as above. Then there exists a sequence $(x_\nu^k,t_\nu^k)\in \R^2$ and $e_\nu^l\in L^2(\R)$ such that
\begin{equation}\label{eq-b3}
g_\nu(y)=\sum_{j=1}^l e^{\frac {it^j_\nu y^2}{2}}e^{-ix^j_\nu y} h^{-1}_\nu (t^j_\nu, y)\phi^j+e^l_\nu(y)
\end{equation} with the following properties: The parameters $\{(x_\nu^k,t_\nu^k)\}$ satisfy, for $k\neq j$,
\begin{equation}\label{eq-b4}
|x_\nu^k-x_\nu^j|+|t_\nu^k-t_\nu^j|\to \infty, \text{ as }\nu\to\infty.
\end{equation}
For each $l\ge 1$,
\begin{equation}\label{eq-L2-ortho}
\|f_\nu\|_{L^2(S^1)}^2=\sum_{j=1}^l\|\phi^j\|_2^2+\|e_\nu^l\|_2^2 \text{ as } \nu\to \infty.
\end{equation}
The function $e_\nu^l$ satisfies
\begin{equation}\label{eq-b8}
\limsup_{l\to\infty}\limsup_{\nu\to\infty}\left\| r_\nu^{1/2} e^{\frac {itr_\nu^2\Delta}{2}}\bigl[ h_\nu(r_\nu^2 t,y)\frac {e_\nu^l}{(1-r_\nu^2 y^2)^{\frac 14}}\bigr](r_\nu x)\right\|_{L^6_{t,x}(\R^2)}=0.
\end{equation}where
$$e^{\frac {it\Delta}{2}}f(x)=\int e^{ixy-\frac {ity^2}{2}}f(y)dy.$$
\end{proposition}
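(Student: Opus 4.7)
The plan is to run the iterative profile-extraction scheme of Bahouri--G\'erard, adapted to the 1D Schr\"odinger setting as in Carles--Keraani, applied to the operator
\begin{equation*}
T_\nu g(x,t) := r_\nu^{1/2}\, e^{itr_\nu^2\Delta/2}\bigl[h_\nu(r_\nu^2 t, y)\, g(y)/(1-r_\nu^2 y^2)^{1/4}\bigr](r_\nu x).
\end{equation*}
By the rescaling identity \eqref{eq-b5}, $T_\nu g_\nu$ coincides with $\widehat{f_\nu^+\sigma}$. Since $r_\nu\to 0$ and $g_\nu$ is upper normalized on the unit ball of $\R$, the unimodular phase $h_\nu$ tends to $1$ on compact subsets of $\R^2$, so $T_\nu$ asymptotically behaves like the 1D Schr\"odinger extension operator $g \mapsto e^{it\Delta/2}g$.

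The key analytic input is a refined $L^2 \to L^6$ estimate for $T_\nu$, obtained by pulling back Lemma \ref{le-refinement-of-Tomas-Stein} through $\phi_{\mathcal{C}_\nu}$. A cap $\mathcal{C}\subset S^1$ is carried to an interval $I\subset \R$ of length $|I|\sim |\mathcal{C}|/r_\nu$; since the pullback is an $L^2$-isometry and $|\mathcal{C}|^{-1/2}\int_\mathcal{C} |f_\nu^+|\,d\sigma$ transfers to $|I|^{-1/2}\int_I |g_\nu|\,dy$ up to $1+o(1)$ factors from $(1-r_\nu^2 y^2)^{-1/4}\to 1$, I obtain
\begin{equation*}
\|T_\nu g\|_{L^6_{t,x}(\R^2)} \le C\bigl(\sup_{I\subset\R} |I|^{-1/2}\textstyle\int_I |g|\,dy\bigr)^{\alpha}\|g\|_{L^2(\R)}^{1-\alpha}.
\end{equation*}

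With this in hand, I extract profiles iteratively. Suppose $\limsup_\nu \|T_\nu g_\nu\|_{L^6}\ge\eta > 0$. The refined inequality locates an interval $I_\nu\subset\R$ with $\|g_\nu\|_{L^2(I_\nu)}\gtrsim \eta^{1/\alpha}$ by Cauchy--Schwarz. The upper normalization of $g_\nu$ in physical space rules out concentration at infinity, and the Galilean-modulation and chirp pseudo-symmetries of $T_\nu$, namely $g\mapsto e^{-ixy}g$ (physical translations by $x$) and $g\mapsto e^{ity^2/2}h_\nu(t,y)g$ (time translations by $t$), then permit a choice of $(x_\nu^1, t_\nu^1)\in \R^2$ such that $e^{-it_\nu^1 y^2/2}\,e^{ix_\nu^1 y}\,h_\nu(t_\nu^1,y)\,g_\nu(y)$ admits, after passing to a subsequence, a nontrivial weak $L^2$-limit $\phi^1$. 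Setting
\begin{equation*}
e_\nu^1 := g_\nu - e^{it_\nu^1 y^2/2}\,e^{-ix_\nu^1 y}\,h_\nu^{-1}(t_\nu^1, y)\,\phi^1,
\end{equation*}
weak convergence yields $\|g_\nu\|_2^2 = \|\phi^1\|_2^2 + \|e_\nu^1\|_2^2 + o(1)$. Iterating on the residuals produces the full decomposition \eqref{eq-b3}, with \eqref{eq-L2-ortho} following inductively from the weak convergence together with $\|g_\nu\|_2 \to 1$. The pairwise divergence \eqref{eq-b4} is enforced by the orthogonality of the $\phi^j$: if two parameter pairs remained at bounded distance along a subsequence, the weak limit defining one profile would absorb a nonzero component of the other, contradicting the construction.

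The main obstacle is the vanishing \eqref{eq-b8} of the residual in the $L^6$ Strichartz norm as $l,\nu\to\infty$. Because $\sum_j \|\phi^j\|_2^2 \le \liminf_\nu \|g_\nu\|_2^2 \le 1$, the sequence $\|\phi^j\|_2$ tends to zero; the refined inequality applied to $e_\nu^l$ then shows that if $\limsup_\nu\|T_\nu e_\nu^l\|_{L^6}$ remained bounded below by $\eta>0$ uniformly in $l$, a further profile of norm $\gtrsim\eta^{1/\alpha}$ would be extractable, contradicting the summability of the profile norms. A technical subtlety arises when $|t_\nu^j|\to\infty$: the chirp $e^{it_\nu^j y^2/2}$ is then genuinely oscillatory and the factor $h_\nu^{-1}(t_\nu^j,y)$ must be retained. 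Its precise form, the inverse of $h_\nu$ at matching time, is exactly what aligns each profile with $T_\nu$ rather than merely with the limiting Schr\"odinger propagator, producing the clean algebraic form of \eqref{eq-b3} required to propagate the decomposition through the bound $\mathcal{R}\le (5/2)^{1/6}\mathcal{R}_P$ in the subsequent small-cap argument.
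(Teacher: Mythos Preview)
Your overall framework---iterative weak-limit extraction, asymptotic $L^2$-orthogonality from weak convergence, and parameter divergence \eqref{eq-b4} forced by nontriviality of successive profiles---matches the paper's. The genuine gap is in your argument for the residual smallness \eqref{eq-b8}, where you rely on the pulled-back cap-average inequality. That inequality bounds $\|T_\nu e_\nu^l\|_6$ by a power of $\sup_I |I|^{-1/2}\int_I|e_\nu^l|$, a quantity depending only on $|e_\nu^l|$ and carrying no information about the time parameter. A large cap average tells you $e_\nu^l$ has $L^2$ mass on some interval, but it does not locate a pair $(x_\nu,t_\nu)$ for which $e^{-it_\nu y^2/2}e^{ix_\nu y}h_\nu(t_\nu,y)\,e_\nu^l$ has a nontrivial weak limit: the residual could oscillate like $e^{i\lambda_\nu y^2}\psi(y)$ with $\lambda_\nu\to\infty$, keeping its cap averages large while the required chirp shift must be found by some other mechanism. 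So the step ``a further profile of norm $\gtrsim\eta^{1/\alpha}$ would be extractable'' is not justified by Lemma~\ref{le-refinement-of-Tomas-Stein} alone, and your contradiction with summability does not close.

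The paper handles this differently. Profiles are extracted not via the refined cap inequality but via the abstract blow-up quantity $\mu(P^l)=\sup\{\|\phi\|_2:\phi\ \text{is a weak limit of some}\ T_\nu(e_\nu^l)\}$, choosing $\phi^{l+1}$ with $\|\phi^{l+1}\|_2\ge\tfrac12\mu(P^l)$, so that summability forces $\mu(P^l)\to 0$. For \eqref{eq-b8} the paper does not invoke Lemma~\ref{le-refinement-of-Tomas-Stein} at all. Using the upper normalization of $g_\nu$ it restricts to the set $E=\{|y|\le R,\ |g_\nu|\le R\}$, on which (via \eqref{eq-c10}) $e_\nu^l$ inherits a uniform $L^\infty$ bound; it then applies the separate Lemma~\ref{le-local-restr}, an $L^\infty\to L^q$ restriction estimate for some $4<q<6$ coming from the $L^p\to L^q$ Stein--Tomas range, to obtain a uniform $L^q_{t,x}$ bound on $T_\nu(e_\nu^l 1_E)$ independent of $l$. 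Interpolating with $L^\infty_{t,x}$ finishes the job, because the value of the extension at any space-time point $(x_0,t_0)$ is exactly the pairing $\langle e^{-it_0 y^2/2}e^{ix_0 y}h_\nu(t_0,y)\,e_\nu^l,\ \phi_R(1-r_\nu^2y^2)^{-1/4}1_E\rangle$ against a fixed bump, hence is controlled by $\mu(P^l)\to 0$. This $L^q$--$L^\infty$ interpolation, with Lemma~\ref{le-local-restr} as the key analytic input rather than Lemma~\ref{le-refinement-of-Tomas-Stein}, is the step your outline is missing.
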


\begin{proposition}[Orthogonality]\label{prop-ortho}
Let $\{(x_\nu^j,t_\nu^j)\}$ be as above and set
\begin{align}
G_\nu^k& :=e^{\frac {it^k_\nu y^2}{2}}e^{-ix^k_\nu y} h^{-1}_\nu (t^k_\nu, y)\phi^k,\\
G_\nu^j& :=e^{\frac {it^j_\nu y^2}{2}}e^{-ix^j_\nu y} h^{-1}_\nu (t^j_\nu, y)\phi^j.
\end{align} Then for $k\neq j$,
\begin{equation}\label{eq-b6}
\begin{split}
&\lim_{\nu\to\infty} \left\| \Bigl(r_\nu^{1/2} e^{\frac {itr_\nu^2\Delta}{2}}\bigl[ h_\nu(r_\nu^2 t,y)\frac {G_\nu^k}{(1-r_\nu^2 y^2)^{\frac 14}} \bigr](r_\nu x)\Bigr)\Bigl(r_\nu^{1/2} e^{\frac {itr_\nu^2\Delta}{2}}\bigl[h_\nu(r_\nu^2 t,y)\frac {G_\nu^j}{(1-r_\nu^2 y^2)^{\frac 14}}  \bigr](r_\nu x)\Bigr) \right\|_{L^3_{t,x}(\R^2)}=0.
\end{split}
\end{equation}
\end{proposition}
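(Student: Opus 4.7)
The plan is to combine three ingredients: a rescaling that converts the sphere-side expression into a Schr\"odinger-like one, an approximation by compactly supported profiles to stabilize the $\nu$-dependence, and the well-known orthogonality principle that translates of a fixed $L^2$ function weakly converge to zero as the translation parameters diverge.

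First I would perform the change of variables $(T,X)=(r_\nu^2 t,\,r_\nu x)$ in the $L^3$ integral; the prefactor $r_\nu^{1/2}$ and the Jacobian cancel, and by \eqref{eq-b5} together with the identity $h_\nu(T,y)h_\nu^{-1}(t_\nu^k,y)=h_\nu(T-t_\nu^k,y)$ (which holds because $h_\nu$ is exponential in $T$), the left side of \eqref{eq-b6} equals
\[
\bigl\|U_\nu[\phi^k](T-t_\nu^k,\,X-x_\nu^k)\,U_\nu[\phi^j](T-t_\nu^j,\,X-x_\nu^j)\bigr\|_{L^3_{T,X}(\R^2)},
\]
where $U_\nu[\phi](T,X):=\int e^{iXy-iTy^2/2}h_\nu(T,y)\,\phi(y)(1-r_\nu^2 y^2)^{-1/4}\,dy$. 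Translating by $(t_\nu^k,x_\nu^k)$ and writing $\tau_\nu:=t_\nu^j-t_\nu^k$, $\xi_\nu:=x_\nu^j-x_\nu^k$, the hypothesis \eqref{eq-b4} becomes $|\tau_\nu|+|\xi_\nu|\to\infty$ and it suffices to show that $\|U_\nu[\phi^k](T,X)\,U_\nu[\phi^j](T-\tau_\nu,X-\xi_\nu)\|_{L^3_{T,X}}\to 0$.

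Next I would approximate each $\phi^\sharp\in L^2(\R)$ (with $\sharp\in\{k,j\}$) by $\phi_R^\sharp$ supported in $\{|y|\le R\}$. Since $|h_\nu|\equiv 1$ and, once $\nu$ is large enough that $r_\nu R\le 1/2$, the weight $(1-r_\nu^2 y^2)^{-1/4}$ is bounded by a universal constant on $\{|y|\le R\}$, the Tomas-Stein bound (equivalently, the $L^2\to L^6_{T,X}$ Strichartz estimate for $U_\nu$) gives $\|U_\nu[\phi^\sharp]-U_\nu[\phi_R^\sharp]\|_{L^6_{T,X}}\le C\|\phi^\sharp-\phi_R^\sharp\|_{L^2}$ uniformly in $\nu$. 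By H\"older on the product, the $L^3$ norm in question changes by $O(\|\phi^k-\phi_R^k\|_2+\|\phi^j-\phi_R^j\|_2)$, which vanishes as $R\to\infty$. For each fixed $R$, the convergences $h_\nu(T,y)\to 1$ and $(1-r_\nu^2 y^2)^{-1/4}\to 1$ hold uniformly on $|y|\le R$, so by dominated convergence in the Fourier integrand plus Strichartz, $U_\nu[\phi_R^\sharp]\to e^{iT\Delta/2}\phi_R^\sharp$ in $L^6_{T,X}(\R^2)$ as $\nu\to\infty$. The problem thus reduces to showing, for the ($\nu$-independent) Schr\"odinger waves,
\[
\bigl\|e^{iT\Delta/2}\phi_R^k(X)\cdot e^{i(T-\tau_\nu)\Delta/2}\phi_R^j(X-\xi_\nu)\bigr\|_{L^3_{T,X}(\R^2)}\to 0.
\]
Raising to the third power, this equals the $L^2$ pairing of $|e^{iT\Delta/2}\phi_R^k|^3$ against the $(\tau_\nu,\xi_\nu)$-translate of $|e^{iT\Delta/2}\phi_R^j|^3$. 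Both functions lie in $L^2_{T,X}(\R^2)$ by Strichartz, and translates by $(\tau_\nu,\xi_\nu)\to\infty$ of any fixed element of $L^2(\R^2)$ converge weakly to zero (a standard consequence of the density of $C_c(\R^2)$ in $L^2$), so the pairing tends to zero.

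The main obstacle will be the uniform-in-$\nu$ Strichartz bound for $U_\nu$ and its $L^6$-convergence to $e^{iT\Delta/2}$: the oscillatory amplitude $h_\nu$ and the weight singularity at $|y|=1/r_\nu$ must both be handled, but on compactly supported profiles this is routine once $\nu$ is taken large relative to the support parameter $R$. Once this Schr\"odinger-limit step is in place, the Bahouri–G\'erard-style orthogonality argument of the previous paragraph closes the proof.
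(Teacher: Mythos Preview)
Your overall architecture---rescale to the Schr\"odinger picture, approximate by compactly supported profiles, pass to the $\nu$-limit in $L^6$, then invoke weak convergence of translates---is a natural Bahouri--G\'erard style route and would work if each step were sound. The gap is in the passage to the limit. You write that $h_\nu(T,y)\to 1$ ``uniformly on $|y|\le R$'' and that this plus Strichartz gives $U_\nu[\phi_R^\sharp]\to e^{iT\Delta/2}\phi_R^\sharp$ in $L^6_{T,X}$. Neither half of this is right. First, $h_\nu(T,y)=e^{iT\epsilon_\nu(y)}$ with $\epsilon_\nu(y)=O(r_\nu^2 R^4)$ on $|y|\le R$, so $|h_\nu(T,y)-1|$ is controlled only by $|T|\,r_\nu^2$: the convergence is \emph{not} uniform in $T$. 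Second, even if it were, Strichartz applies to $e^{iT\Delta/2}f$ with $f$ independent of $T$, not to oscillatory integrals with $T$-dependent amplitude; there is no off-the-shelf estimate turning a small $L^2_y$ norm of $[h_\nu(T,y)(1-r_\nu^2y^2)^{-1/4}-1]\phi_R(y)$ into a small $L^6_{T,X}$ norm. What one actually needs to upgrade pointwise convergence to $L^6$-convergence is an $L^6$ dominating function, and this forces you into precisely the uniform dispersive bounds the paper derives: the stationary phase decay $|U_\nu[\phi_R](T,X)|\lesssim |T|^{-1/2}$, the non-stationary bound $\lesssim |X|^{-1}$ when $|X|\gg |T|$, and the trivial bound, all uniform in $\nu$ (the paper's \eqref{eq-b17}--\eqref{eq-b19}). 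Once you have these you can indeed take the limit, but then you have not avoided the paper's analytic input.

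The paper's own proof organises the same input differently and more directly: it never passes to $e^{iT\Delta/2}$. After the same rescaling and translation, it localises each factor to a box $\Omega_N=\{|T|+|X|\le N\}$ using the uniform-in-$\nu$ dispersive bounds above (so the $L^6$ mass of each $U_\nu[\phi^\sharp]$ outside $\Omega_N$ is small uniformly in $\nu$), and then on $\Omega_N\cap(\Omega_N+(\tau_\nu,\xi_\nu))$ simply observes that this intersection has vanishing measure as $|\tau_\nu|+|\xi_\nu|\to\infty$, while the integrand is bounded. Your weak-convergence-of-translates endgame is an elegant replacement for this last step, but it only kicks in after the Schr\"odinger limit, and getting there legitimately costs exactly the stationary phase work you were hoping to sidestep. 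If you want to keep your route, fill the gap by proving the $L^6$-convergence via those uniform pointwise bounds plus dominated convergence in $(T,X)$; otherwise the paper's localisation argument is shorter.
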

We state a useful lemma on a localized linear restriction estimate., which will be used in the proof of Proposition \ref{prop-decomp}.
\begin{lemma}\label{le-local-restr}
Let $4<q<6$ and $h_\nu$ be defined as in \eqref{eq-32}. Assume that $\lim_{\nu\to\infty} r_\nu=0$. Then if $|(1-r_\nu^2y^2)^{1/4}f|\le M$ for some $M>0$ and for all $|y|\le R$ and all sufficiently large $\nu$,
$$ \|e^{\frac {it\Delta}{2}} [h_\nu(t,y) \frac {f(y)}{(1-r_\nu^2 y^2)^{1/4}}]\|_{L^q_{t,x}} \le CM, \text{ uniformly in sufficiently large } \nu,$$
where the constant may depend on $R$, but not on $\nu$.
\end{lemma}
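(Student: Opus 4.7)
The plan is to rewrite the operator as an oscillatory integral and then split the $t$-integration into $|t|\le 1$ and $|t|>1$, handling each piece by the natural mechanism. Setting $F(y):=f(y)/(1-r_\nu^2y^2)^{1/4}$ and folding the definition of $h_\nu$ into the Schr\"odinger phase yields
\begin{equation*}
u(t,x):=e^{\frac{it\Delta}{2}}\bigl[h_\nu(t,y)F(y)\bigr](x)=\int e^{i(xy+t\psi_\nu(y))}F(y)\,dy,\qquad \psi_\nu(y):=\frac{\sqrt{1-r_\nu^2y^2}-1}{r_\nu^2}.
\end{equation*}
Since $\psi_\nu''(y)=-(1-r_\nu^2y^2)^{-3/2}$ stays in $[-2^{3/2},-1]$ on $|y|\le R$ once $r_\nu R\le 1/2$, the phase is a uniformly nondegenerate perturbation of the parabolic phase $-y^2/2$, and the hypothesis gives $\|F\|_{L^\infty}\le 2M$ on its support $|y|\le R$ for all sufficiently large $\nu$.

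For $|t|\le 1$, Plancherel in $x$ gives $\|u(t,\cdot)\|_{L^2_x}=\sqrt{2\pi}\,\|F\|_{L^2}\lesssim MR^{1/2}$, while the trivial bound $\|u(t,\cdot)\|_{L^\infty_x}\le\|F\|_{L^1}\lesssim MR$ holds as well. Interpolating yields $\|u(t,\cdot)\|_{L^q_x}\lesssim_q MR^{1-1/q}$ for every $q\in(2,\infty)$, and integrating over the bounded set $|t|\le 1$ produces $\|u\|_{L^q_{t,x}(|t|\le 1)}\lesssim_{q,R}M$ with constant independent of $\nu$.

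For $|t|>1$, the plan is to establish the dispersive pointwise bound $|u(t,x)|\lesssim|t|^{-1/2}M$ inside the ``light cone'' $|x|\le 2R|t|$ and rapid decay $|u(t,x)|=O_N(|x|^{-N})$ outside it. The critical point $y^*$ of $\Phi(y)=xy+t\psi_\nu(y)$, determined by $y^*/\sqrt{1-r_\nu^2y^{*2}}=x/t$, lies in $(-R,R)$ precisely on the cone; there the uniform lower bound $|\Phi''(y^*)|=|t||\psi_\nu''(y^*)|\ge|t|$ delivers the $|t|^{-1/2}$ decay via stationary phase. Off the cone, $|\Phi'(y)|\gtrsim|x|$ on the support of $F$, so repeated integration by parts produces the rapid decay. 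Consequently
\begin{equation*}
\int_{|t|>1}\|u(t,\cdot)\|_{L^q_x}^q\,dt\lesssim M^q R\int_{|t|>1}|t|^{1-q/2}\,dt,
\end{equation*}
which converges exactly when $q>4$, yielding $\|u\|_{L^q_{t,x}(|t|>1)}\lesssim_{q,R}M$. Combining the two regions gives the claim.

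The main obstacle will be rigorously applying stationary phase to the merely $L^\infty$ amplitude $F$. I expect to bypass this either by approximating $F$ in $L^2$ by smooth compactly supported functions, invoking the classical stationary phase, and using Plancherel to control the difference in $L^q$ via the endpoint Tomas--Stein bound for the curve; or by appealing to the rescaling identity \eqref{eq-b5} to identify $u(t,x)$ with $r_\nu^{-1}\widehat{g\sigma}(x/r_\nu,t/r_\nu^2)$ times a unimodular factor, where $g$ is supported on a cap of radius $r_\nu R$ on $S^1$ with $\|g\|_{L^\infty}\le M$. The uniform Tomas--Stein inequality on $S^1$ then handles $q=6$, while Knapp-type pointwise bounds for $\widehat{g\sigma}$ cover the range $4<q<6$ with a constant that is dilation-invariant and depends only on $R$ and $q$.
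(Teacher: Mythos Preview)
Your main line of attack --- splitting in $t$ and invoking stationary phase for $|t|>1$ --- runs into a genuine obstruction that you correctly flag but do not resolve. The dispersive bound $|u(t,x)|\lesssim |t|^{-1/2}\|F\|_{L^\infty}$ simply fails for general bounded amplitudes: the linear functional $F\mapsto\int_{-R}^{R} e^{i\Phi(y)}F(y)\,dy$ on $L^\infty$ has norm $2R$ regardless of how curved $\Phi$ is, so no decay in $t$ can be extracted from the hypothesis $\|F\|_{L^\infty}\le 2M$ alone. (Van der Corput with amplitude needs $F$ of bounded variation, not merely bounded.) The same defect undermines the integration-by-parts argument off the cone. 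Your first workaround --- smooth approximation plus endpoint Tomas--Stein for the remainder --- does not close either: Tomas--Stein only controls the remainder in $L^6$, not in $L^q$ for $q<6$, while the stationary-phase constant for the smooth part depends on derivative norms that blow up along any approximating sequence.

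The paper takes essentially your second workaround as its entire argument, but the key input is different from what you describe. One rescales as in \eqref{eq-b5} to identify $\|u\|_{L^q_{t,x}}$ with $r_\nu^{-1+3/q}\|\widehat{g\sigma}\|_{L^q(\R^2)}$, where $g$ is supported on a small cap of $S^1$ with $\|g\|_{L^\infty}\le M$. The crucial ingredient is not a Knapp-type pointwise bound but the full adjoint restriction inequality for the circle (Zygmund; equivalently, interpolation between Tomas--Stein at $q=6$ and the two-dimensional restriction theorem at $q>4$): $\|\widehat{g\sigma}\|_{L^q(\R^2)}\lesssim\|g\|_{L^p(\sigma)}$ on the critical line $3/q=1-1/p$, valid for every $4<q<6$ with the corresponding $p>2$. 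After undoing the change of variables in the $L^p$ norm, the exponent of $r_\nu$ is exactly $-1+3/q+1/p=0$, so the bound is uniform in $\nu$, and $\|g\|_{L^p(\sigma)}$ is controlled trivially by $CMR^{1/p}$ using only the $L^\infty$ hypothesis on $(1-r_\nu^2y^2)^{1/4}f$. No oscillatory-integral estimate beyond this black-box restriction theorem is required.
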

\begin{proof} Choose $r_\nu$ sufficiently small such that $B(0,R) \subset \{|y| \le \frac 12 r_\nu^{-1}\}$.  We write
\begin{equation*}
\begin{split}
e^{\frac {it\Delta}{2}} [h_\nu(t,y) \frac{f(y)}{(1-r_\nu^2y^2)^{1/4}}](x)&=\int e^{ixy+it \frac {\sqrt{1-|r_\nu y|^2}-1}{r_\nu^2}}\frac {1}{(1-|r_\nu y|^2)^{\frac 14}}f(y)dy \\
& =\int e^{i\frac {x}{r_\nu} y+i\frac {t}{r_\nu^2} \sqrt{1-|y|^2}} r_\nu^{-1}f(r_\nu^{-1}y)(1-|y|^2)^{\frac 14}\frac {dy}{\sqrt{1-|y|^2}}.
\end{split}
\end{equation*}
Then
\begin{equation*}
\begin{split}
&\left\|\int e^{i\frac {x}{r_\nu} y+i\frac {t}{r_\nu^2} \sqrt{1-|y|^2}} r_\nu^{-1}f(r_\nu^{-1}y)(1-|y|^2)^{\frac 14} \frac {dy}{\sqrt{1-|y|^2}}\right\|_{L^q_{t,x}} \\
&= r_\nu^{-1+\frac 3q}\left\|\int_{S^1_+} e^{ixy+it\sqrt{1-|y|^2}} f(r_\nu^{-1}y)(1-|y|^2)^{\frac 14} d\sigma\right\|_{L^q_{t,x}}\\
& \le r_\nu^{-1+\frac 3q} \|f(r_\nu^{-1}y)(1-y^2)^{1/4}\|_{L^p(\sigma, +)},
\end{split}
\end{equation*} where $p$ satisfies $\frac 3q=1-\frac {1}{p}, p<2$, and $L^p(\sigma, +)$ is understood as integrating over $S^1_+$; we have also regarded $f(y)$ as a function on the upper hemisphere $S^1_+:=\{z\in S^1:\, z\cdot (0,1) >0\}$.

Then continuing the above inequality, we have
\begin{equation*}
\begin{split}
 & r_\nu^{-1+\frac 3q}\left( \int |f(r_\nu^{-1}y) (1-y^2)^{\frac 14}|^p\frac {dy}{\sqrt{1-y^2}} \right)^{1/p}  \\
 & \le r_\nu^{-1+\frac 3q+\frac 1p} \left( \int_{|y|\le R}  |f(y) (1-r_\nu^2y^2)^{\frac 14}|^p \frac {dy}{\sqrt{1-r_\nu^2y^2}}\right)^{1/p} \\
& \le CM R^{1/p},
\end{split}
\end{equation*} for all sufficiently large $\nu$. This finishes the proof of Lemma \ref{le-local-restr}.
\end{proof}

Now we will first prove Proposition of \ref{prop-decomp}, and then Proposition \ref{prop-ortho}.
\begin{proof}[The proof of Proposition \ref{prop-decomp}.] We split the proof into two steps.

\textbf{Step 1.} For $(x_\nu,t_\nu)\in \R^2$, we define
$$T_\nu(g)(y)=e^{-\frac {it_\nu y^2}{2}} e^{ix_\nu y}h_\nu(t_\nu,y) g(y);$$
analogously $T_\nu^i$ for $(x_\nu^i,t_\nu^i)$ for $i\ge 1$, and $T_\nu^{-1}(g)(y)=e^{\frac {it_\nu y^2}{2}} e^{-ix_\nu y}h^{-1}_\nu(t_\nu,y)g(y)$. Let $P^0$ denote the sequence $\{g_\nu\}_{\nu\ge 1}$. Then we define the set
$$\mathcal{W}(P^0)=\{w-\lim_{\nu\to\infty} T_\nu (P_\nu^0)(y) \text{ in } L^2(\R):\, (x_\nu,t_\nu)\in \R^2\},$$
where $w-\lim f_\nu$ denotes a weak limit of $\{f_\nu\}$ in $L^2$. Define the blow-up criterion associated to $\mathcal{W}(P^0)$:
$$ \mu(P^0):=\sup\{\|\phi\|_{L^2(\R)}: \phi\in \mathcal{W}(P^0)\}.$$

Then for any $\phi\in \mathcal{W}(P^0)$,
$$\|\phi\|_{L^2} \le \limsup_{\nu\to\infty}\|T_\nu(g_\nu)\|_{L^2} =\limsup_{\nu\to\infty} \|\frac {r_\nu^{1/2}f_\nu(r_\nu y, \sqrt{1-r_\nu^2 y^2})}{(1-|r_\nu y|^2)^{1/4}}\|_2=\limsup \|f_\nu\|_{L^2(\sigma,+)},$$
where the integral in $L^2(\sigma,+)$ should be understood as integrating over the upper hemisphere.

If $\mu(P^0)=0$, then we set $l=0$, and $e_\nu^0=g_\nu $ for all $\nu\ge 1$. Otherwise, $\mu(P^0)>0$, then up to a subsequence, there exists nontrivial $\phi^1\in L^2$ and $(x_\nu^1,t_\nu^1)_{\nu\ge 1}$ such that
\begin{align}
& \phi^1 =w-\lim_{\nu\to\infty}T_\nu^1(P_\nu^0)(y), \\
& \|\phi^1\|_{2}\ge \frac 12 \mu(P^0).
\end{align}Let $P^1$ denote the sequence $\{g_\nu(y)-(T_\nu^1)^{-1}(\phi^1)(y)\}_{\nu\ge 1}$ and set $$e_\nu^1 :=g_\nu(y)-(T_\nu^1)^{-1}(\phi^1)(y). $$ It is not hard to see that
\begin{align}
&\label{eq-b23} w-\lim_{\nu\to\infty} T_\nu^1(P_\nu^1)=0,\\
&\label{eq-b24} \|f_\nu\|_{L^2(S^1)}^2 -\|\phi^1\|_2^2= \|e_\nu^l\|_2^2, \text{ as }\nu\to\infty.
\end{align}

For $P^1=\{g_\nu(y)-(T_\nu^1)^{-1}(\phi^1)(y)\}_{\nu\ge 1}$, we iteratively consider the set
$$\mathcal{W}(P^1)=\{w-\lim_{\nu\to\infty} T_\nu (P_\nu^1)\text{ in } L^2(\R):\, (t_\nu,x_\nu)\in \R^2\}.$$
Then we test whether $\mu(P^1)>0$: if $\mu(P^1)=0$, then the algorithm stops.  If not, then up to a subsequence, there exists nontrivial $\phi^2\in L^2$ and and $(x_\nu^2,t_\nu^2)_{\nu\ge 1}$ such that
\begin{align}
&\phi^2 =w-\lim_{\nu\to\infty}T_\nu^2(P_\nu^1)(y), \\
& \|\phi^2\|_{2}\ge \frac 12 \mu(P^1).
\end{align}
By a similar consideration as in \eqref{eq-b23} and \eqref{eq-b24}, if setting $P^2= \{P_\nu^1-(T_\nu^2)^{-1}(\phi^2)\}$ and assuming \eqref{eq-b4}, then
\begin{align*}
 & w-\lim_{\nu\to\infty} T_\nu^2(P_\nu^2)=0,\\
 & \|f_\nu\|_2^2 -\sum_{j=1}^2 \|\phi^j\|_2^2=\|e_\nu^2\|_2^2, \text{ as }\nu\to\infty,
\end{align*}
where 
$$e_\nu^2 :=g_\nu-(T_\nu^1)^{-1}\phi^1-(T_\nu^2)^{-1}\phi^2.$$

The orthogonality in the $L^2$ norm above needs an input, namely, \eqref{eq-b4}. Otherwise, up to a subsequence we may assume that
$$|t_\nu^2-t_\nu^1|+|x_\nu^2-x_\nu^1|\to c, \text{ as }\nu\to\infty,$$for some $0\le c<\infty$. In this case, the dominated convergence theorem gives, up to a subsequence,
$$T_\nu^2(T_\nu^1)^{-1} \text{ converges strongly in } L^2.$$
This will imply that \begin{equation}\label{eq-b28}
T_\nu^2(P_\nu^1)\to 0, \text{ weakly in } L^2,
 \end{equation} as $T_\nu^1(P_\nu^1) \to 0$ weakly in $L^2$  and the following relation holds, $$T_\nu^2(P_\nu^1)=T_\nu^2(T_\nu^1)^{-1} \bigl(T_\nu^1(P_\nu^1)\bigr). $$
But the claim in \eqref{eq-b28} is a contradiction to the existence of nontrivial $\phi^2$.  So \eqref{eq-b4} holds.

Iterating this argument, a diagonalization process produces a family of pairwise orthogonal
sequences $(x_\nu^j, t_\nu^j)$ and $\phi^j$ satisfying \eqref{eq-b3}, \eqref{eq-b4} and \eqref{eq-L2-ortho}. Since $\sum_j \|\phi^j\|_2^2\le \sup_\nu \|f_\nu\|_2^2 <\infty$ and $\mu(P^{l+1}) \le 2\|\phi^l\|_2$, we have
\begin{equation}\label{eq-b7}
\mu(P^{l})\to 0, \text{ as } l\to\infty.
\end{equation}

To conclude this step, we deduce some information on  $e_\nu^l$. Firstly, the orthogonality condition \eqref{eq-b4} implies that, for any $\psi\in L^\infty$, the orthogonality \eqref{eq-b4} implies that, for each $l\ge 1$,
\begin{equation}\label{eq-c10} 
\|g_\nu \psi \|_{2}^2= \sum_{j=1}^l \|\phi^j \psi\|_2^2+ \|e_\nu^l \psi\|_2^2
\end{equation}
as $\nu\to \infty$. In particular, this holds for $\psi \in \mathcal{S}$, the Schwartz class on $\mathbb{R}$. 

Let $R\gg1$. Define a set 
$$E= \{y\in \mathbb{R}:\, |y|\le R \text{ and } |g_\nu(y)|\le R \text{ for all sufficiently large }\nu\}. $$
Then \eqref{eq-c10} implies that, for any $l\ge 1$,
$$\limsup_{\nu\to \infty} \|e^l_\nu 1_E\|_{L^\infty} \le C R$$
for some $C>0$. This further implies that,
$$\limsup_{\nu\to \infty} \|(1-r_\nu^2 y^2)^{1/4}e^l_\nu 1_E\|_{L^\infty} \le C R. $$

\textbf{Step 2.} At this step, we show that the localized restriction estimate $L^\infty\to L^q_{t,x}$ for some $q<6$ in Lemma \ref{le-local-restr}, together with the information that $\lim \mu(P^l)=0$, will imply \eqref{eq-b8}. To do it, by scaling, the norm on the left hand side of \eqref{eq-b8} is equivalent to 
$$\|\int e^{ixy+it\frac {\sqrt{1-r_\nu^2 y^2}-1}{r_\nu^2}}\frac {e_\nu^l(y)}{(1-r_\nu^2y^2)^{1/4}}dy\|_{L^6_{t,x}}.$$ 

For each $R\gg 1$, recall the definition of the set $E$. We split
$$e_\nu^l = e_\nu^l 1_E + e_\nu^l(1-1_E). $$
Since the following operator is uniformly bounded from $L^2(\mathbb{R})$ to $L^6_{t,x}(\mathbb{R}\times \mathbb{R})$, 
$$ \phi \mapsto \int e^{ixy+it\frac {\sqrt{1-r_\nu^2 y^2}-1}{r_\nu^2}}\frac {\phi(y)}{(1-r_\nu^2y^2)^{1/4}}dy,$$
and $f_\nu$ is upper normalized with respect to $\mathcal{C}(z_\nu, r_\nu)$, up to a subsequence, we may conclude that by \eqref{eq-c10}
$$ \|\int e^{ixy+it\frac {\sqrt{1-r_\nu^2 y^2}-1}{r_\nu^2}}\frac {e_\nu^l(y)(1-1_E)}{(1-r_\nu^2y^2)^{1/4}}dy \|_{L^6_{t,x}}\le C \|e_\nu^l(y)(1-1_E) \|_2 \le C \|g_\nu(1-1_E) \|_2\le \Theta(R)$$
as $\nu \to \infty$. So we may restrict our attention to $e_\nu^l$ on $E$. By the discussion at the end of \textbf{Step 1}, we may assume that, for all $l\ge 1$, 
\begin{equation}\label{eq-33}
\limsup_{\nu\to \infty}\|(1-r_\nu^2y^2)^{\frac 14}e_\nu^l 1_E \|_\infty \le  CR. 
\end{equation}  Then by Lemma \ref{le-local-restr}, 
$$\limsup_{\nu\to \infty} \|e^{\frac {it\Delta}{2}}\bigl(h_\nu(t)\frac {e_\nu^l1_E}{(1-r_\nu^2y^2)^{1/4}}\bigr)\|_{L^q_{t,x}} \le C$$
for some $C>0$ independent of $\nu$ and $l$. Then by the interpolation, establishing \eqref{eq-b8} is reduced to
\begin{equation}\label{eq-b11}
\limsup_{l\to\infty}\limsup_{\nu\to\infty} \|e^{\frac {it\Delta}{2}}\bigl(h_\nu(t)\frac {e_\nu^l(y)}{(1-r_\nu^2y^2)^{1/4}}\bigr)\|_{L^\infty_{t,x}} =0.
\end{equation}
This will follow from the fact that $\mu(P^l) \to 0$ as $l\to\infty.$ Indeed, there exists $(x^l_\nu,t^l_\nu)$ such that, up to a subsequence,
\begin{equation}\label{eq-b12}
\left|e^{\frac {it_\nu^l\Delta}{2}}\bigl(h_\nu(t_\nu^l))\frac {e_\nu^l1_E}{(1-r_\nu^2y^2)^{1/4}} \bigr)(x_\nu^l)\right|\sim\|e^{\frac {it\Delta}{2}}\bigl(h_\nu(t))\frac {e_\nu^l1_E}{(1-r_\nu^2y^2)^{1/4}}\bigr)\|_{L^\infty_{t,x}}.
\end{equation}
On the other hand, since $e_\nu^l$ is compactly supported, 
$$ e^{-\frac {it^l_\nu y^2}{2}}e^{ix^l_\nu y}h_\nu(t_\nu^l,y) )\frac {e_\nu^l1_E}{(1-r_\nu^2y^2)^{1/4}} =e^{-\frac {it^l_\nu y^2}{2}}e^{ix^l_\nu y}h_\nu(t_\nu^l,y) )\frac {e_\nu^l1_E}{(1-r_\nu^2y^2)^{1/4}} \phi_R(y)$$ 
for some suitable bump function $\phi_R$ adapted to the ball $B(0,R)$; taking integration in $y$ on both sides, we have
\begin{equation}\label{eq-b14}
e^{\frac {it_\nu^l\Delta}{2}}\bigl(h_\nu(t_\nu^l,y))\frac {e_\nu^l1_E}{(1-r_\nu^2y^2)^{1/4}} \bigr) (x_\nu^l)
=\langle e^{-\frac {it^l_\nu y^2}{2}}e^{ix^l_\nu y}h_\nu(t_\nu^l,y)e_\nu^l,\, \frac {\phi_R1_E}{(1-r_\nu^2y^2)^{1/4}}\rangle_{L_y^2}.
\end{equation}
Since $P^l=\{e_\nu^l\}_{\nu\ge 1}$, by the definition of $\mu(P^l)$,
\begin{equation}
\text{ LHS }\eqref{eq-b12} \le \mu(P^l) \|\frac {\phi_R1_E}{(1-r_\nu^2y^2)^{1/4}}\|_{L^2}\le \mu(P^l) \|\phi_R1_E\|_{L^2} \to 0, \text{ as } l\to\infty,
\end{equation} since $\mu(P^l)\to 0$ as $l\to\infty.$ This finishes the proof of \eqref{eq-b8}.

Therefore the proof of Proposition \ref{prop-decomp} is complete.
\end{proof}

Next we show that \eqref{eq-b4} implies the orthogonality result \eqref{eq-b6} in Proposition \ref{prop-ortho}.
\begin{proof}[The proof of Proposition \ref{prop-ortho}.] To begin, we may assume that $\phi^j$ and $\phi^k$ are smooth functions with compact supports. Also we recall that
$$ e^{\frac {it\Delta}{2}}\bigl( h_\nu(t,y)\frac {G_\nu^k}{(1-r_\nu^2y^2)^{1/4}}\bigr)=\int e^{i(x-x_\nu^k)-\frac {i(t-t_\nu^k)y^2}{2}}e^{i(t-t_\nu^k)\bigl(\frac {\sqrt{1-r^2_\nu y^2}-1}{r_\nu^2}+\frac {y^2}{2}\bigr)} \frac {\phi^k(y)}{(1-r_\nu^2y^2)^{1/4}} dy. $$
Likewise for $e^{\frac {it\Delta}{2}}\bigl( h_\nu(t,y)\frac {G_\nu^j}{(1-r_\nu^2y^2)^{1/4}}\bigr)$. Then by a change of variables, we need to show
\begin{equation}\label{eq-b15}
\begin{split}
&\left\| e^{i\frac {t-(t_\nu^j-t_\nu^k)}{2}\Delta}\Bigl( e^{i\bigl(t-(t_\nu^j-t_\nu^k)\bigr)\bigl(
\frac {\sqrt{1-r_\nu^2 y^2}-1}{r_\nu^2}+\frac {y^2}{2}\bigr)}\frac {\phi^j}{(1-r_\nu^2 y^2)^{1/4}}\Bigr)\bigl(x-(x_\nu^j-x_\nu^k)\bigr)\times \right. \\
&\qquad\qquad \times \left.e^{\frac {it\Delta}{2}}\Bigl( e^{it\bigl(
\frac {\sqrt{1-r_\nu^2 y^2}-1}{r_\nu^2}+\frac {y^2}{2}\bigr)}\frac {\phi^k}{(1-r_\nu^2y^2)^{1/4}}\Bigr)\right\|_{L^3_{t,x}} \to 0
\end{split}
\end{equation} as $\nu$ goes to infinity.

For a large $N\gg 1$, set
$$\Omega_N:=\{(t,x): |t|+|x|\le N\},\quad \Omega_{N,\nu}=\Omega_N-(t_\nu^j-t_\nu^k, x_\nu^j-x_\nu^k).$$
We first claim that, for $\Omega=\Omega_N$ or $\Omega_{N,\nu}$,
\begin{equation}\label{eq-b16}
\begin{split}
&\int_{\Omega^c} \left|  e^{i\frac {t-(t_\nu^j-t_\nu^k)}{2}\Delta}\Bigl( e^{i\bigl(t-(t_\nu^j-t_\nu^k)\bigr)\bigl(
\frac {\sqrt{1-r_\nu^2 y^2}-1}{r_\nu^2}+\frac {y^2}{2}\bigr)}\frac {\phi^j}{(1-r_\nu^2y^2)^{1/4}}\Bigr) \bigl(x-(x_\nu^j-x_\nu^k)\bigr) \times \right. \\
&\qquad \qquad \times \left. e^{\frac {it\Delta}{2}}\Bigl( e^{it\bigl(
\frac {\sqrt{1-r_\nu^2 y^2}-1}{r_\nu^2}+\frac {y^2}{2}\bigr)}\frac {\phi^k}{(1-r_\nu^2y^2)^{1/4}}\Bigr)\right|^3dxdt\to 0
\end{split}
\end{equation} as $N$ goes to infinity uniformly in $\nu$. Here $\Omega^c:=\R^2\setminus \Omega.$

We write
$$e^{\frac {it\Delta}{2}}\Bigl( e^{it\bigl(\frac {\sqrt{1-r_\nu^2 y^2}-1}{r_\nu^2}+\frac {y^2}{2}\bigr)}\frac {\phi^k}{(1-r_\nu^2 y^2)^{1/4}}\Bigr)(x)=\int e^{ixy+it\frac {\sqrt{1-|r_\nu y|^2}-1}{r_\nu^2}} \frac {\phi^k(y)}{(1-r_\nu^2y^2)^{1/4}}dy.$$
For $y$ in a compact set in $\mathbb{R}$ and all sufficiently small $r_\nu>0$, we have
\begin{equation}\label{eq-34}
\left|\partial_y^2 \bigl(\frac {\sqrt{1-|r_\nu y|^2}-1}{r_\nu^2} \bigr) \right|\sim 1/4, \text{ uniformly in all sufficiently large }\nu.
\end{equation}
We state three important estimates uniformly in all sufficiently large $\nu$. The first is by the stationary phase estimate \cite[p.334]{Stein:1993}:
\begin{equation}\label{eq-b17}
\left|e^{\frac {it\Delta}{2}}\Bigl( e^{it\bigl(\frac {\sqrt{1-r_\nu^2 y^2}-1}{r_\nu^2}+\frac {y^2}{2}\bigr)}\frac {\phi^k}{(1-r_\nu^2 y^2)^{1/4}} \Bigr)(x)\right|\le C_{\phi^k} |t|^{-1/2}.
\end{equation} Secondly by integration by parts, if $|x|\ge C|t|$ for a large constant $C>0$ depending on the size of the compact support of $\phi^k$, for all sufficiently large $\nu$,
\begin{equation}\label{eq-b18}
\left|e^{\frac {it\Delta}{2}}\Bigl( e^{it\bigl(\frac {\sqrt{1-r_\nu^2 y^2}-1}{r_\nu^2}+\frac {y^2}{2}\bigr)}\frac {\phi^k}{(1-r_\nu^2 y^2)^{1/4}} \Bigr)(x)\right|\le C_{\phi^k} |x|^{-1}.
\end{equation} Thirdly there always holds a trivial bound, for all $x,t$,
\begin{equation}\label{eq-b19}
\left|e^{\frac {it\Delta}{2}}\Bigl( e^{it\bigl(\frac {\sqrt{1-r_\nu^2 y^2}-1}{r_\nu^2}+\frac {y^2}{2}\bigr)}\frac {\phi^k}{(1-r_\nu^2 y^2)^{1/4}} \Bigr)(x)\right|\le C_{\phi^k}.
\end{equation}
Here all constants $C_{\phi^k}$ depends on the function $\phi^k$ but independent of $\nu$. We are now ready to prove \eqref{eq-b16} when $\Omega=\Omega_N$; the case where $\Omega=\Omega_{N,\nu}$ is similar and so will be omitted. By the Cauchy-Schwarz inequality,
\begin{equation}
\begin{split}
\text{ LHS of }\eqref{eq-b16} &\le C \left\| e^{i\frac {t-(t_\nu^j-t_\nu^k)}{2}\Delta}\Bigl( e^{i\bigl(t-(t_\nu^j-t_\nu^k)\bigr)\bigl(
\frac {\sqrt{1-r_\nu^2 y^2}-1}{r_\nu^2}+\frac {y^2}{2}\bigr)}\frac {\phi^j}{(1-r_\nu^2 y^2)^{1/4}}\Bigr) \bigl(x-(x_\nu^j-x_\nu^k)\bigr)\right\|^3_{L^6(\R^2)} \\
&\qquad \times \left\|e^{\frac {it\Delta}{2}}\Bigl( e^{it\bigl(
\frac {\sqrt{1-r_\nu^2 y^2}-1}{r_\nu^2}+\frac {y^2}{2}\bigr)}\frac {\phi^k}{(1-r_\nu^2 y^2)^{1/4}}\Bigr)\right\|^3_{L^6_{t,x}(\Omega^c_N)}\\
&\le C \left\| e^{\frac {it\Delta}{2}}\Bigl( e^{it\bigl(\frac {\sqrt{1-r_\nu^2 y^2}-1}{r_\nu^2}+\frac {y^2}{2}\bigr)}\frac {\phi^j}{(1-r_\nu^2 y^2)^{1/4}}\Bigr)\right\|^3_{L^6(\R^2)} \times\\
&\qquad \qquad \times \left\|e^{\frac {it\Delta}{2}}\Bigl( e^{it\bigl(
\frac {\sqrt{1-r_\nu^2 y^2}-1}{r_\nu^2}+\frac {y^2}{2}\bigr)}\frac {\phi^k}{(1-r_\nu^2 y^2)^{1/4}}\Bigr)\right\|^3_{L^6_{t,x}(\Omega^c_N)}.
\end{split}
\end{equation}
The first term is bounded by the Tomas-Stein inequality and a change of variables. For the second term, by using estimates \eqref{eq-b17}, \eqref{eq-b18} and \eqref{eq-b19}, we see that
\begin{equation}\label{eq-b21}
\left\|e^{\frac {it\Delta}{2}}\Bigl( e^{it\bigl(
\frac {\sqrt{1-r_\nu^2 y^2}-1}{r_\nu^2}+\frac {y^2}{2}\bigr)}\frac {\phi^k}{(1-r_\nu^2 y^2)^{1/4}}\Bigr)\right\|^3_{L^6_{t,x}(\Omega^c_N)}\to 0, \text{ as }  N\to \infty, \text{ uniform in } \nu.
\end{equation}
Therefore we have established \eqref{eq-b16}. To finish the proof \eqref{eq-b15}, we need to show that, for a fixed $N\gg 1$,
\begin{equation}\label{eq-b22}
\begin{split}
& \int_{\Omega_N\cap\Omega_{N,\nu}} \left|e^{i\frac {t-(t_\nu^j-t_\nu^k)}{2}\Delta}\Bigl( e^{i\bigl(t-(t_\nu^j-t_\nu^k)\bigr)\bigl(
\frac {\sqrt{1-r_\nu^2 y^2}-1}{r_\nu^2}+\frac {y^2}{2}\bigr)}\frac {\phi^j}{(1-r_\nu^2 y^2)^{1/4}}\Bigr)\bigl(x-(x_\nu^j-x_\nu^k)\bigr) \times \right. \\
&\qquad \qquad \times \left. e^{\frac {it\Delta}{2}}\Bigl( e^{it\bigl(
\frac {\sqrt{1-r_\nu^2 y^2}-1}{r_\nu^2}+\frac {y^2}{2}\bigr)}\frac {\phi^k}{(1-r_\nu^2 y^2)^{1/4}}\Bigr)\right|^3 \to 0
\end{split}
\end{equation} as $\nu$ goes to infinity. It actually holds as
$$\operatorname{measure}(\Omega_N\cap \Omega_{N,\nu}) \to 0, \text{ when } \lim_{\nu\to\infty} |t_\nu^j-t_\nu^k|+|x_\nu^j-x_\nu^k|=\infty,$$
and we can apply $L^\infty_{t,x}$-bounds to both integrals, which are controlled as $\phi^j$ and $\phi^k$ are assumed to be bounded and compactly supported. Therefore the proof of \eqref{eq-b6} is complete.
\end{proof}

\subsection{Ruling out small caps}\label{sec-rule-out-small}
By the discussion at the beginning of Section \ref{sec:step5}, we aim to show that
\begin{equation}\label{eq-b32}
\lim_{\nu\to\infty} \|\widehat{f^+_\nu \sigma }\|_6^6 \le \mathcal{R}^6_{\textbf{P}},
 \end{equation}
which leads to $\mathcal{R}\le (5/2)^{1/6}\mathcal{R}_{\textbf{P}}$. However, it is a contradiction to the strict inequality in Proposition \ref{prop-strict-inequ}.

By Propositions \ref{prop-decomp} and \ref{prop-ortho}, 
\begin{equation}\label{eq-b31}
\begin{split}
\lim_{\nu\to\infty} \|\widehat{f^+_\nu \sigma }\|_6^6 
&\le \sum_{j=1}^\infty \lim_{\nu\to\infty} \|e^{\frac {it\Delta}{2}}\bigl(h_\nu(t,y)\frac {G_\nu^j}{(1-r_\nu^2y^2)^{1/4}}\bigr)\|_6^6\\
&=\sum_{j}\lim_{\nu\to\infty}\left\|\int e^{i(x-x_\nu^j)y-\frac {(t-t_\nu^j)y^2}{2}}e^{i(t-t_\nu^j)\bigl(\frac {\sqrt{1-r_\nu^2 y^2}-1}{r_\nu^2}+\frac {y^2}{2}\bigr)}\frac {\phi^j(y)}{(1-r_\nu^2y^2)^{1/4}}dy \right\|_6^6\\
&=\sum_{j}\lim_{\nu\to\infty}\left\|\int e^{ixy-\frac {ty^2}{2}}e^{it\bigl(\frac {\sqrt{1-r_\nu^2 y^2}-1}{r_\nu^2}+\frac {y^2}{2}\bigr)}\frac {\phi^j(y)}{(1-r_\nu^2y^2)^{1/4}} dy \right\|_6^6\\
&=\sum_j \|e^{\frac {it\Delta}{2}}\phi^j\|_6^6\le \mathcal{R}^6_{\textbf{P}}\sum_j \|\phi^j\|_2^6\\
&\le \mathcal{R}^6_{\textbf{P}} \bigl(\sum_j \|\phi^j\|^2_2\bigr)^3\\
&\le \mathcal{R}^6_{\textbf{P}} \lim_{\nu\to\infty} \|f^+_\nu\|_2^6=\mathcal{R}^6_{\textbf{P}}.
\end{split}
\end{equation} This proves \eqref{eq-b32}. Here we have used
$$\lim_{\nu\to\infty}\left\|\int e^{ixy-\frac {ty^2}{2}}e^{it\bigl(\frac {\sqrt{1-r_\nu^2 y^2}-1}{r_\nu^2}+\frac {y^2}{2}\bigr)}\frac {\phi^j(y)}{(1-r_\nu^2y^2)^{1/4}} dy -e^{\frac {it\Delta}{2}}\phi^j(x)\right\|_6 =0.$$
This follows from the stationary phase analysis and the dominated convergence theorem. So far we have proved that the first half of Proposition \ref{prop-regularity-after-rescaling} that $\inf_\nu r_\nu>0$.

\subsection{Big caps; existence of extremals.}\label{sec-big-caps}
In this section we aim to prove the second half of Proposition \ref{prop-regularity-after-rescaling}: There exists an extremal function for the Tomas-Stein inequality \eqref{eq-1}. The proof is similar to the process of ruling out small caps above. Let $\{f_\nu\}$ be an extremizing sequence of nonnegative functions supported on the whole sphere and even upper normalized with respect to caps $\mathcal{C}_\nu \cup (-\mathcal{C}_\nu)$. We have proved that $\inf_\nu r_\nu>0.$ Then up to a subsequence, the uniform upper normalization means simply that $\|f_\nu\|_{L^2(S^1)} \le 1$, and there exists a function $\Theta$ independent of $\nu$ and satisfying that $\Theta(R)\to 0$ as $R\to \infty$, such that 
$$ \int_{|f_\nu (x)|>R} |f_\nu(x)|^2d\sigma (x) \le \Theta(R)$$
for all $\nu$. The radii no longer enter into the discussion.

We denote $f_\nu^\pm$ the restrictions of $f_\nu$ to the upper hemisphere $S^1_+$ and the lower. Then we see that $f_\nu^+(x)=f_\nu^-(-x)$, for $x\in S^1_+$, and $\|f_\nu\|_2^2=2\|f_\nu^+\|_2^2$. Moreover, by a simple change of variables,
\begin{equation}\label{eq-37}
\widehat{f_\nu^-\sigma} (t,x)=\widehat{f_\nu^+\sigma}(-t,-x)=\overline{\widehat{f_\nu^+\sigma}}(t,x).
\end{equation} Then
\begin{equation}\label{eq-38}
\widehat{f_\nu\sigma}(t,x) =\widehat{f_\nu^+\sigma} (t,x)+\widehat{f_\nu^-\sigma} (t,x)=2\Re \widehat{f_\nu^+\sigma} (t,x),
\end{equation} where $\Re f$ denotes the real part of $f$. 

Write 
\begin{align*}
\widehat{f_\nu^+}(x,t) &= \int_{S^1_+} e^{i(x,t) \cdot z} f_\nu^+(z) d\sigma(z) \\
 &= \int e^{ixy+it\sqrt{1-y^2}}f_\nu^+(y,\sqrt{1-y^2}) \frac {dy}{\sqrt{1-y^2}}.
\end{align*}

Similar to Propositions \ref{prop-decomp} and \ref{prop-ortho} in the subsection \ref{sec-key-prop}, we will develop a profile decomposition for ${f_\nu^+(y,\sqrt{1-y^2})}/{(1-y^2)^{1/4}}$. 

\begin{proposition}\label{prop-decomp-bigcaps}
Let $\{f^+_\nu\}$ be defined above. Then there exists a sequence $(x_\nu^k,t_\nu^k)\in \R^2$ and $e_\nu^l\in L^2(\R)$ such that
\begin{equation}\label{eq-c3}
\frac {f^+_\nu(y)}{(1-y^2)^{1/4}}=\sum_{j=1}^l e^{-ix^j_\nu y-it_\nu^j\sqrt{1-y^2}} \phi^j(y)+e^l_\nu(y)
\end{equation} with the following properties: The parameters $\{(x_\nu^k,t_\nu^k)\}$ satisfy, for $k\neq j$,
\begin{equation}\label{eq-c4}
|x_\nu^k-x_\nu^j|+|t_\nu^k-t_\nu^j|\to \infty, \text{ as }\nu\to\infty.
\end{equation}
For each $l\ge 1$,
\begin{equation}
\|f^+_\nu\|_{L^2(S^1)}^2= \sum_{j=1}^l\|\phi^j\|_2^2+\|e_\nu^l\|_2^2, \text{ as } \nu\to \infty.
\end{equation}
The function $e_\nu^l$ satisfies, if $E_\nu^l=(1-y^2)^{1/4}e_\nu^l$
\begin{equation}\label{eq-c5}
\limsup_{l\to\infty}\limsup_{\nu\to\infty}\left\| \widehat{E_\nu^l \sigma}\right\|_{L^6_{t,x}(\R^2)}=0.
\end{equation}
\end{proposition}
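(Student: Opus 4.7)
The plan is to adapt the iterative scheme of Proposition \ref{prop-decomp} to the regime $\inf_\nu r_\nu>0$, where rescaling to a paraboloid is unnecessary and the spherical phase $e^{it\sqrt{1-y^2}}$ can be used directly. Set $P_\nu^0(y):=f^+_\nu(y)/(1-y^2)^{1/4}\in L^2_y([-1,1])$, so that $\|P_\nu^0\|_{L^2_y}=\|f_\nu^+\|_{L^2(\sigma)}$, and introduce the unitary modulation operator
$$T_\nu[x_\nu,t_\nu]g(y):=e^{ix_\nu y+it_\nu\sqrt{1-y^2}}g(y);$$
a change of variables gives $\widehat{\bigl((T[x_\nu,t_\nu])^{-1}\phi\bigr)\sigma}(x,t)=\widehat{\phi\sigma}(x-x_\nu,t-t_\nu)$, so this family encodes physical-space translations of the spherical extension. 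Define $\mathcal{W}(P):=\{w\text{-}\lim_\nu T_\nu[x_\nu,t_\nu]P_\nu:(x_\nu,t_\nu)\in\R^2\}$ and $\mu(P):=\sup\{\|\phi\|_2:\phi\in\mathcal{W}(P)\}$.

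The iterative extraction and both orthogonality statements proceed exactly as in the proof of Proposition \ref{prop-decomp}: while $\mu(P^l)>0$, extract $\phi^{l+1}$ with $\|\phi^{l+1}\|_2\ge\tfrac12\mu(P^l)$ realized along parameters $(x_\nu^{l+1},t_\nu^{l+1})$, and set $e_\nu^{l+1}:=e_\nu^l-(T_\nu^{l+1})^{-1}\phi^{l+1}$. The asymptotic orthogonality \eqref{eq-c4} is forced by the same contradiction as in Proposition \ref{prop-decomp}: were $(x_\nu^j-x_\nu^k,t_\nu^j-t_\nu^k)$ bounded for some $j<k$, dominated convergence would make $T_\nu^k(T_\nu^j)^{-1}$ converge strongly in $L^2$, and since $T_\nu^je_\nu^{k-1}\rightharpoonup0$ by construction one would obtain $\phi^k=0$, a contradiction. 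The $L^2$ Pythagorean identity follows by induction from the unitarity of $T_\nu^{l+1}$ and $T_\nu^{l+1}e_\nu^{l+1}\rightharpoonup0$; summability then forces $\mu(P^l)\to0$.

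The main obstacle is the remainder bound \eqref{eq-c5}, since the Schr\"odinger substitute $e^{it\Delta/2}$ used in the small-caps case is unavailable. I would mirror Step 2 of the proof of Proposition \ref{prop-decomp}, but now invoke stationary phase on the circle directly: split $E_\nu^l=E_\nu^{l,\flat}+E_\nu^{l,\sharp}$ with $E_\nu^{l,\flat}:=E_\nu^l\,\chi_{\{|E_\nu^l|\le M\}}$. After approximating each extracted $\phi^j$ by smooth compactly supported functions of $y$, the amplitude-tightness provided by the upper normalization of $f^+_\nu$ combined with \eqref{eq-L2-ortho} makes $\|E_\nu^{l,\sharp}\|_{L^2(\sigma)}$ arbitrarily small as $M\to\infty$ along the sequence $\nu\to\infty$, so its $L^6$ Tomas--Stein contribution is negligible. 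For $E_\nu^{l,\flat}$, because $S^1$ has nowhere-vanishing curvature, stationary phase yields the dispersive decay $|\widehat{E_\nu^{l,\flat}\sigma}(x,t)|\le C M(1+|(x,t)|)^{-1/2}$ uniformly in $\nu$ and $l$, and hence a localized restriction bound $\|\widehat{E_\nu^{l,\flat}\sigma}\|_{L^q_{t,x}(\R^2)}\le C_M$ for every $q\in(4,\infty)$. Interpolation against $L^\infty$ then reduces \eqref{eq-c5} to
$$\limsup_{l\to\infty}\limsup_{\nu\to\infty}\|\widehat{E_\nu^l\sigma}\|_{L^\infty_{t,x}}=0,$$
which I would establish from the pairing identity $\widehat{E_\nu^l\sigma}(x,t)=\langle T_\nu[x,t]e_\nu^l,\,(1-y^2)^{-1/4}\rangle_{L^2_y}$: taking a near-maximizer $(x_\nu^\ast,t_\nu^\ast)$ of $\|\widehat{E_\nu^l\sigma}\|_\infty$ and passing to a subsequence with $T_\nu[x_\nu^\ast,t_\nu^\ast]e_\nu^l\rightharpoonup\phi\in\mathcal{W}(P^l)$, one obtains $\limsup_\nu\|\widehat{E_\nu^l\sigma}\|_\infty\le\|(1-y^2)^{-1/4}\|_{L^2_y}\,\mu(P^l)$, which vanishes as $l\to\infty$.
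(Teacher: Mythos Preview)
Your overall architecture matches what the paper has in mind: the paper simply says the proof is ``similar'' to that of Proposition~\ref{prop-decomp} and omits the details, and your iterative extraction with the modulations $T_\nu[x_\nu,t_\nu]g(y)=e^{ix_\nu y+it_\nu\sqrt{1-y^2}}g(y)$, the orthogonality argument for \eqref{eq-c4}, and the $L^2$ Pythagorean identity are exactly the right adaptations. Your $L^\infty$ argument for the remainder via the pairing with $(1-y^2)^{-1/4}\in L^2([-1,1])$ is also correct and is the direct analogue of \eqref{eq-b14}.

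There is, however, a genuine gap in your justification of the $L^q$ bound for the truncated remainder. Stationary phase does \emph{not} give the pointwise decay $|\widehat{E_\nu^{l,\flat}\sigma}(x,t)|\le CM(1+|(x,t)|)^{-1/2}$ when the amplitude $E_\nu^{l,\flat}$ is merely in $L^\infty$: van der Corput's lemma requires control of $\|\psi'\|_{L^1}$ (or some smoothness) in addition to $\|\psi\|_{L^\infty}$, and for rough bounded $f$ one can arrange $|\widehat{f\sigma}(\xi_0)|=2\pi\|f\|_\infty$ at any prescribed $\xi_0$ by taking $f(x)=e^{i\xi_0\cdot x}$. The conclusion you need, namely $\|\widehat{E_\nu^{l,\flat}\sigma}\|_{L^q_{t,x}}\le C_M$ for $4<q<6$, is nonetheless true, but it follows from the \emph{non-endpoint} Stein--Tomas estimate $\|\widehat{g\sigma}\|_{L^q(\R^2)}\lesssim\|g\|_{L^p(S^1,\sigma)}$ with $3/q=1-1/p$, $p<2$: for $|g|\le M$ on the compact $S^1$ one has $\|g\|_{L^p(\sigma)}\le C M$. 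This is precisely the mechanism behind Lemma~\ref{le-local-restr} in the small-caps proof, and it is what the paper intends you to transplant here. Once you replace the stationary-phase step by this non-endpoint restriction bound, your interpolation between $L^q$ and $L^\infty$ goes through and the rest of your argument is sound.
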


\begin{proposition}[Orthogonality]\label{prop-ortho-bigcaps}
Let $\{(x_\nu^j,t_\nu^j)\}$ be as above and set
\begin{align}
G_\nu^k& :=e^{-ix_\nu^k y - it_\nu^k \sqrt{1-y^2}} (1-y^2)^{\frac 14}\phi^k,\\
G_\nu^j& :=e^{-ix_\nu^j y - it_\nu^j \sqrt{1-y^2}} (1-y^2)^{\frac 14} \phi^j.
\end{align} Then for $k\neq j$,
\begin{equation}\label{eq-c6}
\begin{split}
&\lim_{\nu\to\infty} \left\|\widehat{G_\nu^k\sigma} \widehat{G_\nu^j\sigma} \right\|_{L^3_{t,x}(\R^2)}=0.
\end{split}
\end{equation}
\end{proposition}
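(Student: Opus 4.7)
The plan is to mirror the argument of Proposition \ref{prop-ortho}, but now the situation is simpler since both profiles live on the same sphere $S^1_+$ and no approximation by a Schr\"odinger propagator is needed. Parametrizing $S^1_+$ by $z=(y,\sqrt{1-y^2})$ with $d\sigma=dy/\sqrt{1-y^2}$, a direct computation yields
$$\widehat{G_\nu^k\sigma}(x,t)=(T\phi^k)(x-x_\nu^k,\,t-t_\nu^k),\qquad (T\phi)(x,t):=\int e^{ixy+it\sqrt{1-y^2}}\,\frac{\phi(y)}{(1-y^2)^{1/4}}\,dy,$$
and likewise for $G_\nu^j$. The operator $T$ coincides with $\phi\mapsto \widehat{\Phi\sigma}$ where $\Phi(y,\sqrt{1-y^2}):=(1-y^2)^{1/4}\phi(y)$ on $S^1_+$, and $\|\Phi\|_{L^2(S^1,\sigma)}=\|\phi\|_{L^2(\R)}$, so the Tomas-Stein inequality \eqref{eq-1} yields $T:L^2(\R)\to L^6(\R^2)$ boundedly. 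Combined with H\"older in $L^3$, this shows that the left-hand side of \eqref{eq-c6} depends continuously on $(\phi^k,\phi^j)\in L^2\times L^2$, uniformly in $\nu$; hence by a density argument I may assume $\phi^k,\phi^j$ are smooth and supported in a fixed compact subset of $(-1,1)$.

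Translating the integration variable by $(x_\nu^k,t_\nu^k)$ and setting $\alpha_\nu:=x_\nu^j-x_\nu^k$, $\beta_\nu:=t_\nu^j-t_\nu^k$ (so that $|\alpha_\nu|+|\beta_\nu|\to\infty$ by \eqref{eq-c4}), the claim \eqref{eq-c6} reduces to
$$\bigl\|(T\phi^k)(x,t)\cdot (T\phi^j)(x-\alpha_\nu,\,t-\beta_\nu)\bigr\|_{L^3_{t,x}(\R^2)}\longrightarrow 0.$$
Set $\Omega_N:=\{(x,t)\in\R^2:|x|+|t|\le N\}$. Since $T\phi^k\in L^6(\R^2)$, for any $\eta>0$ one can choose $N$ with $\|T\phi^k\|_{L^6(\Omega_N^c)}<\eta$, and H\"older gives
$$\|T\phi^k\cdot (T\phi^j)(\cdot-\alpha_\nu,\cdot-\beta_\nu)\|_{L^3(\Omega_N^c)}\le \|T\phi^k\|_{L^6(\Omega_N^c)}\|T\phi^j\|_{L^6(\R^2)}<C\eta,$$
uniformly in $\nu$. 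On the complementary region $\Omega_N$, a change of variables in the translated factor yields
$$\|T\phi^k\cdot (T\phi^j)(\cdot-\alpha_\nu,\cdot-\beta_\nu)\|_{L^3(\Omega_N)}\le \|T\phi^k\|_{L^6(\R^2)}\|T\phi^j\|_{L^6(\Omega_N-(\alpha_\nu,\beta_\nu))},$$
and because the bounded set $\Omega_N-(\alpha_\nu,\beta_\nu)$ drifts off to infinity while $|T\phi^j|^6\in L^1(\R^2)$, the dominated convergence theorem drives this last norm to $0$ as $\nu\to\infty$, for each fixed $N$. Combining the two estimates proves \eqref{eq-c6}.

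The main obstacle, such as it is, lies in the density reduction: one needs the $L^2\to L^6$ boundedness of $T$ together with H\"older to guarantee that approximating $\phi^k,\phi^j$ in $L^2$ controls the $L^3$-norm of the bilinear expression \emph{uniformly} in the translation parameters $(x_\nu^k,t_\nu^k)$ and $(x_\nu^j,t_\nu^j)$. Once this is in place, the decay argument above is routine and, unlike in Proposition \ref{prop-ortho}, does not require the stationary-phase bounds \eqref{eq-b17}-\eqref{eq-b19} because one no longer has to pass between two different extension operators.
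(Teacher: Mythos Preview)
Your proof is correct and follows the same overall template the paper indicates (density, translation, splitting via $\Omega_N$, H\"older); since the paper omits this proof entirely, you have supplied the details. One point worth noting: your argument is in fact cleaner than a literal transcription of the Proposition~\ref{prop-ortho} proof would be. There the extension operator depends on $\nu$ through $r_\nu$, so controlling $\|e^{\frac{it\Delta}{2}}(h_\nu(t)\cdots)\|_{L^6(\Omega_N^c)}$ uniformly in $\nu$ genuinely requires the pointwise stationary-phase bounds \eqref{eq-b17}--\eqref{eq-b19}. Here, after translation, $T\phi^k$ and $T\phi^j$ are \emph{fixed} $L^6$ functions independent of $\nu$, so tail smallness on $\Omega_N^c$ and decay on the drifting set $\Omega_N-(\alpha_\nu,\beta_\nu)$ follow directly from $|T\phi|^6\in L^1$ and dominated convergence, exactly as you wrote. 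For the same reason, your density reduction to smooth compactly supported profiles is harmless but actually unnecessary: every step already goes through for arbitrary $\phi^k,\phi^j\in L^2$.
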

The proofs are similar and so we omit the details. Now we are ready to prove the existence of extremals for \eqref{eq-1}. 
\begin{equation}\label{eq-b33}
\begin{split}
& \mathcal{R}^6=\lim_{\nu\to\infty} \|\widehat{f_\nu \sigma }\|_6^6  =2^6 \limsup_{l\to\infty}  \lim_{\nu\to\infty} \|\Re \left\{ \sum_{j=1}^l \widehat{G_\nu^j\sigma}+\widehat{E_\nu^l\sigma}\right\}\|_6^6\\
&\le 2^6 \limsup_{l\to\infty}  \lim_{\nu\to\infty} \|\Re \sum_{j=1}^l \widehat{G_\nu^j\sigma} \|_6^6\\
&=\sum_{j}\left\|\int e^{i(x,t)\cdot z} (1-y^2)^{1/4}[\phi^j(y)1_{S^1_+}(z)+ \bar{\phi^j}(y)1_{-S^1_+}(z)]d\sigma(y) \right\|_6^6\\
&\le \mathcal{R}^6 \sum_j  \|(1-y^2)^{1/4}[\phi^j(y)1_{S^1_+}(z)+ \bar{\phi^j}(y)1_{-S^1_+}(z)] \|_{L^2(S^1,\sigma)}^6 \\
&\le \mathcal{R}^6 \sum_j  \left(2 \|\phi^j \|_2^2\right)^3 \le \mathcal{R}^6 \left(\sum_j 2\|\phi^j\|^2_2\right)^3\\
&\le \mathcal{R}^6 \left(2 \|f_\nu^+\|^2_2\right)^3= \mathcal{R}^6 \|f_\nu\|_2^6 = \mathcal{R}^6.
\end{split}
\end{equation}
where $z=(y,\cdot)\in S^1$, and $1_{S^1_+}$ and $1_{-S^1_+}$ denotes the indicator functions of the upper and the lower hemispheres of $S^1$, respectively.

Then $\mathcal{R}^6=\mathcal{R}^6$ forces all the inequalities above to be equal. On the other hand, because $2 \sum_j \|\phi^j\|^2_{L^2}\le \|f_\nu\|_{L^2(S^1,\sigma)}=1$, there will be only one $j$ left from the sharpness of embedding of $\ell^3$ into $\ell^1$. Thus for this $j$, there exists an extremal $$\left(\phi(y)1_{S^1_+}+\bar{\phi^j}(y)1_{-S^1_+}\right)(1-y^2)^{1/4}.$$
This completes the proof of Proposition  \ref{prop-regularity-after-rescaling} and hence Theorem \ref{thm-main}.

\appendix
\section{A strict comparison, $\mathbf{S}>(5/2)^{1/6}\mathbf{P}$.}\label{sec:strict-comparison}
In this section, we aim to establish the strict comparison inequality in Proposition \ref{prop-strict-inequ} by using a similar perturbation argument in \cite[Section 17]{Christ-Shao:extremal-for-sphere-restriction-I-existence} on $$\|\widehat{f_\eps\sigma}\|_6^6/\|f_\eps\|_2^6,$$ where $f_\eps$ is defined in \eqref{eq-76}. 

We list several definitions. 
\begin{equation}
\bigl(y,(1-|y|^2)^{1/2}\bigr) =\bigl(y,1-\frac 12 |y|^2-\frac 18|y|^4+O(|y|^6)\bigr),
\end{equation}
\begin{equation}
d\sigma_\eps: =\bigl(1+\frac 12|y|^2+O(|y|^4)\bigr) dy,
\end{equation}
\begin{equation}
\label{eq-76} f_\eps (z):= \eps^{-1/4} e^{(z_2-1)/\eps}\chi_{|z_1|\le \frac 12}\chi_{z_2>0}.
\end{equation}
\begin{equation}
\begin{split}
&u_\eps (t,x):=\int_{S^1} f_\eps(z)e^{-i(x,t)\cdot z} d\sigma(z)\\
&\qquad \qquad =\eps^{-1/4} e^{-it}\int_{\R} e^{\bigl(-\frac 12|y|^2-\frac 18|y|^4+O(|y|^6)\bigr)\eps^{-1}}\times \\
&\qquad \qquad \qquad \times e^{-ix\cdot y} e^{-it\bigl(-\frac 12|y|^2-\frac 18|y|^4+O(|y|^6)\bigr)}
\bigl(1+\frac 12|y|^2+O(|y|^4)\bigr)\chi_{|y|\le 1/2}(y)dy\\
&\qquad \qquad =\eps^{1/4}e^{-it} \int_\R e^{-i\eps^{1/2}x\cdot y}e^{-(1-i\eps t)\bigl(\frac 12|y|^2+\frac \eps 8|y|^4+O(\eps^2|y|^6)\bigr)}\times \\
&\qquad \qquad \qquad \times \bigl(1+\frac \eps 2|y|^2+O(\eps^2|y|^4)\bigr)\chi_{|y|\le 1/2}(\eps^{1/2}y)dy,
\end{split}
\end{equation}where a change of variables is applied in passing to the last inequality. We continue to set
\begin{equation}
\begin{split}
v_\eps (t,x)&:=\eps^{-1/4}u_\eps (\eps^{-1}t,\eps^{-1/2}x)\\
&= e^{-i\eps^{-1}t} \int_\R e^{-ix\cdot y}e^{-(1-it)\bigl(\frac 12|y|^2+\frac\eps 8|y|^4+O(\eps^2|y|^6)\bigr)}\times \\
&\qquad \qquad \times \bigl(1+\frac \eps 2|y|^2+O(\eps^2|y|^4)\bigr)\chi(\eps^{1/2}y)dy,
\end{split}
\end{equation}
\begin{equation}
w_\eps(t,x) :=\int_\R e^{-ix\cdot y}e^{-(1+it)\bigl(\frac 12|y|^2+\frac\eps 8|y|^4\bigr)}\bigl(1+\frac \eps 2|y|^2\bigr)dy.
\end{equation}
\begin{equation}
g_\eps(y):=e^{-\frac 12|y|^2-\frac {\eps}{8}|y|^4},
\end{equation}
\begin{equation}
d\sigma_\eps(y)=\bigl(1+\frac\eps 2|y|^2\bigr)dy.
\end{equation}Note that $1-it\to 1+it$ when passing $v_\eps$ to $w_\eps$, which amounts to a complex conjugation. Then we see that
\begin{equation}
\|v_\eps\|^6_{L^6(\R^2)}=\|u_\eps\|^6_{L^6(\R^2)}, 
\end{equation}
\begin{equation}
\begin{split}
\|w_\eps\|^6_{L^6(\R^2)}&=\|v_\eps\|^6_{L^6(\R^2)}+ O(\eps^2)\\
&=\|u_\eps\|^6_{L^6(\R^2)}+O(\eps^2),\text{ as }\eps\to 0^+.
\end{split}
\end{equation}
\begin{equation}
\|f_\eps\|_{L^2(S^1,\sigma)}^2=\|g_\eps\|_{L^2(\mathbb{R},\sigma_\eps)}^2+ O(\eps^2),\text{ as }\eps\to 0^+.
\end{equation}
We consider the functional
\begin{equation}\label{eq-20}
\Psi(\eps)=\log \frac {\|u_\eps\|_6^6}{\|f_\eps\|_2^6},
\end{equation} which is initially defined for $\eps >0$ and extends continuously and differentially to $\eps=0$. The derivative is
\begin{equation}\label{eq-21}
\partial_\eps|_{\eps=0} \Psi(\eps) =\frac {\partial_\eps|_{\eps=0} \|w_\eps\|_6^6}{\|w_0\|_6^6}
-3\frac {\partial_\eps|_{\eps=0} \|g_\eps\|_2^2}{\|g_0\|_2^2}.
\end{equation} We observe that,
\begin{equation}
\Psi(0)=\log (\mathcal{R}_P^6).
\end{equation}
We will calculate that
\begin{lemma}\label{le-4}
\begin{equation}\label{eq-22}
\partial_\eps|_{\eps=0} \Psi(\eps)>0.
\end{equation}
\end{lemma}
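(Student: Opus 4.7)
The plan is to compute $\partial_\eps|_{\eps=0}\Psi$ explicitly from \eqref{eq-21} and to verify that the resulting rational number is strictly positive. Everything reduces to Gaussian moments in $x$ followed by elementary rational integrals in $t$; I expect to find $\partial_\eps|_{\eps=0}\Psi=1/4$. The main obstacle is purely algebraic: the integrand involves $(1+it)^{-3}$ and one must organize the real/imaginary parts so that the anticipated cancellations become visible.

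For the second term in \eqref{eq-21}, differentiating under the integral sign in $\|g_\eps\|_{L^2(\sigma_\eps)}^2=\int_{\R}e^{-y^2-\eps y^4/4}(1+\eps y^2/2)\,dy$ at $\eps=0$ gives $\int e^{-y^2}\bigl(\tfrac{y^2}{2}-\tfrac{y^4}{4}\bigr)dy=\tfrac{\sqrt\pi}{4}-\tfrac{3\sqrt\pi}{16}=\tfrac{\sqrt\pi}{16}$. Dividing by $\|g_0\|_2^2=\sqrt\pi$ and multiplying by $-3$ contributes $-\tfrac{3}{16}$ to $\partial_\eps|_{\eps=0}\Psi$.

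For the first term, I write $w_\eps=w_0+\eps\tilde w+O(\eps^2)$ where
\begin{equation*}
\tilde w=\int_{\R}e^{-ixy-(1+it)y^2/2}\Bigl(\tfrac{y^2}{2}-\tfrac{(1+it)y^4}{8}\Bigr)dy=-\tfrac{1}{2}\partial_x^2 w_0-\tfrac{1+it}{8}\partial_x^4 w_0.
\end{equation*}
Setting $A=1+it$ and using $w_0(t,x)=\sqrt{2\pi/A}\,e^{-x^2/(2A)}$, direct differentiation yields $\partial_x^2 w_0=\tfrac{x^2-A}{A^2}w_0$ and $\partial_x^4 w_0=\tfrac{x^4-6Ax^2+3A^2}{A^4}w_0$, and combining these gives
\begin{equation*}
\bar w_0\tilde w=-\frac{|w_0|^2\,(x^4-2Ax^2-A^2)}{8A^3}.
\end{equation*}
Now $\partial_\eps|_{\eps=0}\|w_\eps\|_6^6=6\,\Re\iint|w_0|^4\bar w_0\tilde w\,dx\,dt$. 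The Gaussian moments $\int|w_0|^6 dx=(2\pi)^3\sqrt{\pi/3}(1+t^2)^{-1}$, $\int x^2|w_0|^6 dx=(2\pi)^3\sqrt{\pi/3}/6$, $\int x^4|w_0|^6 dx=(2\pi)^3\sqrt{\pi/3}(1+t^2)/12$, combined with $\|w_0\|_6^6=\pi(2\pi)^3\sqrt{\pi/3}$, reduce the first term of $\partial_\eps|_{\eps=0}\Psi$ to
\begin{equation*}
\frac{\partial_\eps|_{\eps=0}\|w_\eps\|_6^6}{\|w_0\|_6^6}=-\frac{3}{4\pi}\int_{\R}\Re\Bigl[\frac{1+t^2}{12A^3}-\frac{1}{3A^2}-\frac{1}{A(1+t^2)}\Bigr]dt.
\end{equation*}

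To evaluate the $t$-integral, I use $A\bar A=1+t^2$ to rewrite $1/A^k=\bar A^k/(1+t^2)^k$, expand $\bar A^2=(1-t^2)-2it$ and $\bar A^3=(1-3t^2)+i(t^3-3t)$, and collect real parts. A short calculation collapses the integrand to the clean expression $\tfrac{t^2-15}{12(1+t^2)^2}$, and the elementary integrals $\int(1+t^2)^{-2}dt=\pi/2$ and $\int t^2(1+t^2)^{-2}dt=\pi/2$ give $\int_{\R}\Re[\,\cdots\,]dt=-7\pi/12$. Hence the first term of $\partial_\eps|_{\eps=0}\Psi$ equals $-\tfrac{3}{4\pi}\cdot(-\tfrac{7\pi}{12})=\tfrac{7}{16}$, and combining with the second term,
\begin{equation*}
\partial_\eps|_{\eps=0}\Psi=\tfrac{7}{16}-\tfrac{3}{16}=\tfrac14>0,
\end{equation*}
which is the claim. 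A naive approach that expands each term of $B(t)/A^3$ separately without using the identity $A\bar A=1+t^2$ generates many terms whose cancellation is then less transparent; this is the one place where care is required.
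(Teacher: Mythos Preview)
Your proof is correct and follows essentially the same route as the paper: compute $\partial_\eps w_\eps|_{\eps=0}$ explicitly, reduce to Gaussian moments in $x$, and evaluate the remaining rational $t$-integrals. The only organizational difference is that you express $\tilde w$ via $x$-derivatives of $w_0$ whereas the paper uses $t$-derivatives; both lead to the same expression $\tilde w/w_0=-\tfrac{1}{8}x^4A^{-3}+\tfrac14 x^2A^{-2}+\tfrac18 A^{-1}$. Your final value $\partial_\eps|_{\eps=0}\Psi=\tfrac14$ is in fact the correct one: the paper's figure $\tfrac{11}{16}$ arises from a factor-of-two slip in evaluating $\|w_0\|_6^6$ (the paper records $c_0^6\pi\sqrt\pi/(2\sqrt3)$, but $\int_\R(1+t^2)^{-1}dt=\pi$ gives $c_0^6\pi\sqrt\pi/\sqrt3$), which doubles the first term of \eqref{eq-21} from $\tfrac{7}{16}$ to $\tfrac78$.
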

\begin{proof}
\begin{equation}\label{eq-24}
\begin{split}
\partial_\eps|_{\eps=0}w_\eps&=\bigl[\frac 12 (1+it)\partial_t^2+i\partial_t\bigr]\int_\R e^{-ix\cdot y-\frac {1+it}{2}|y|^2}dy\\
&=\bigl[\frac 12 (1+it)\partial_t^2+i\partial_t\bigr] c_0(1+it)^{-1/2}e^{-\frac {|x|^2}{2(1+it)}}\\
&=\bigl[\frac 12 (1+it)\partial_t^2+i\partial_t\bigr]w_0(t,x),
\end{split}
\end{equation}where $c_0>0$ is some universal constant and $w_0:=c_0(1+it)^{-1/2}e^{-\frac {|x|^2}{2(1+it)}}$.
Define \begin{equation}
\phi(t,x):=-\frac 12 |x|^2(1+it)^{-1}-\frac 12 \log(1+it).
\end{equation} Then
\begin{equation}
w_0(t,x)=c_0(1+it)^{-1/2}e^{-\frac {|x|^2}{2(1+it)}}=c_0e^{\phi}.
\end{equation}Continuing the computation in \eqref{eq-24},
\begin{equation}
\bigl[\frac {1+it}{2}(\phi_t^2+\phi_{tt})+i\phi_t\bigr]w_0.
\end{equation}
We compute
\begin{align}
\phi_t&=\frac i2|x|^2(1+it)^{-2}-\frac i2(1+it)^{-1},\\
\phi_{tt}&=|x|^2(1+it)^{-3}-\frac 12(1+it)^{-2},\\
\phi_t^2&=-\frac 14|x|^4(1+it)^{-4}+\frac 12 |x|^2(1+it)^{-3}-\frac 14(1+it)^{-2}.
\end{align} Thus
\begin{equation}
\phi^2_{t}+\phi_{tt}=-\frac 14|x|^4(1+it)^{-4}+\frac 32|x|^2(1+it)^{-3}-\frac 34(1+it)^{-2}.
\end{equation}Then
\begin{equation}\label{eq-25}
\begin{split}
&\frac {1+it}{2}(\phi_t^2+\phi_{tt})+i\phi_t=\\
&=-\frac 18|x|^4(1+it)^{-3}+\frac 14|x|^2(1+it)^{-2}+\frac 18(1+it)^{-1}.
\end{split}
\end{equation}
Taking the real part in \eqref{eq-25}, we have
\begin{equation}
\begin{split}
&\Re \bigl[\frac {1+it}{2}(\phi_t^2+\phi_{tt})+i\phi_t\bigr]\\
&=-\frac 18|x|^4(1+t^2)^{-3}(1-3t^2)+\frac 14|x|^2(1+t^2)^{-2}(1-t^2)+\frac 18(1+t^2)^{-1}.
\end{split}
\end{equation}
Since
\begin{equation}
\partial_\eps \|w_\eps\|_6^6 =\partial_\eps \int |w_\eps|^6=\partial_\eps \int \bigl(w_\eps\overline{w}_\eps\bigr)^3\\
=6\int |w_\eps|^6 \Re \left(\frac {\partial_\eps w_\eps}{w_\eps}\right),
\end{equation} we have
\begin{equation}\label{eq-26}
\begin{split}
&\partial_\eps|_{\eps=0} \|w_\eps\|_6^6= 6\iint \Re \bigl[\frac {1+it}{2}(\phi_t^2+\phi_{tt})+i\phi_t\bigr]|w_0|^6dxdt\\
&=c_0^6 \iint \bigl[ -\frac 34|x|^4(1+t^2)^{-3}(1-3t^2)+\frac 32|x|^2(1+t^2)^{-2}(1-t^2)+\frac 34(1+t^2)^{-1}\bigr]\times \\
&\qquad \qquad \qquad\times (1+t^2)^{-3/2}e^{-3\frac {|x|^2}{1+t^2}}dxdt\\
&=c_0^6\iint \bigl[ -\frac 34|x|^4(1-3t^2)+\frac 32|x|^2(1-t^2)+\frac 34\bigr](1+t^2)^{-2}e^{-3|x|^2}dxdt\\
&=c_0^6\int \left(-\frac 34(1-3t^2)\frac {\sqrt\pi}{12\sqrt 3}+\frac 32(1-t^2)\frac {\sqrt\pi}{6\sqrt3}+\frac {3\sqrt\pi}{4\sqrt3}\right)(1+t^2)^{-2}dt,
\end{split}
\end{equation}where we have used that
\begin{equation*}
\int_\R e^{-3|x|^2}dx =\frac {\sqrt\pi}{\sqrt3},\,\int_\R |x|^2 e^{-3|x|^2}dx =\frac {\sqrt\pi}{6\sqrt 3},\text{ and } \int_\R |x|^4e^{-3|x|^2}dx =\frac {\sqrt\pi}{12\sqrt3}.
\end{equation*}
Hence we continue \eqref{eq-26},
\begin{equation}
\begin{split}
=&c_0^6\left(-\frac {\sqrt\pi}{16\sqrt3}\int_\R (t^2-3)(1+t^2)^{-2}dt+\frac {3\sqrt\pi}{4\sqrt3} \int_\R(1+t^2)^{-2}dt\right)\\
&=c_0^6\frac {\sqrt\pi}{16\sqrt3}\left(-\int_\R t^2(1+t^2)^{-2}dt+15\int_\R (1+t^2)^{-2}dt\right)\\
&=c_0^6\frac {7\pi\sqrt\pi}{16\sqrt3}
\end{split}
\end{equation} where we have used that
$$\int_\R t^2(1+t^2)^{-2}dt=\pi/2,\text{ and }\int_\R(1+t^2)^{-2}dt=\pi/2.$$
To conclude so far, we obtain,
$$\partial_\eps|_{\eps=0} \|w_\eps\|_6^6 =c_0^6\frac {7\pi\sqrt\pi}{16\sqrt3}.$$
On the other hand,
\begin{equation}
\|w_0\|_6^6=c_0^6 \iint (1+t^2)^{-3/2} e^{-\frac {3|x|^2}{1+t^2}}dxdt =c_0^6\frac {\pi\sqrt\pi}{2\sqrt3}.
\end{equation}
Therefore we conclude that
\begin{equation}\label{eq-27}
\frac {\partial_\eps|_{\eps=0} \|w_\eps\|_6^6}{\|w_0\|_6^6}=\frac 78.
\end{equation}
We are left with computing $3\frac {\partial_\eps|_{\eps=0} \|g_\eps\|_2^2}{\|g_0\|_2^2}$:
\begin{equation}
\begin{split}
\partial_\eps|_{\eps=0} \|g_\eps\|_2^2& =\int_\R \left(-\frac 14 y^4+\frac 12y^2\right)e^{-y^2}dy\\
 &=-\frac 14 \times \frac {3\sqrt\pi}{4}+\frac 12 \times \frac {\sqrt\pi}{2}=\frac {\sqrt\pi}{16}.\\
\|g_0\|_2^2&=\int_\R e^{-y^2}dy =\sqrt{\pi}.
\end{split}
\end{equation}
Note that in the first inequality we have used that
$$
\int_\R y^4e^{-y^2} dy =\frac {3\sqrt\pi}{4},\text{ and }
\int_\R y^2 e^{-y^2} dy =\frac {\sqrt\pi}{2}.$$
So we have
\begin{equation}\label{eq-28}
3\frac {\partial_\eps|_{\eps=0} \|g_\eps\|_2^2}{\|g_0\|_2^2}=\frac 3{16}.
\end{equation}

Combining \eqref{eq-21}, \eqref{eq-27} and \eqref{eq-28}, we see that
\begin{equation}
\partial_\eps|_{\eps=0} \Psi(\eps )= \frac 78-\frac {3}{16}=\frac {11}{16}>0,
\end{equation} which establishes the claim in Lemma \ref{le-4}.

Then the following symmetry consideration completes the proof of Proposition \ref{prop-strict-inequ}: Let $f_\eps$ be defined in \eqref{eq-76}, and $\tilde{f}_\eps(x):=f_\eps(-x), \, F_\eps :=(f_\eps+\tilde{f}_\eps)/{\sqrt2}.$
Then $\|F_\eps\|_2=\|f_\eps\|_2$ and
\begin{equation}\label{eq-77}
\frac {\|F_\eps\sigma\ast F_\eps \sigma \ast F_\eps\sigma \|_2}{\|F_\eps\|_2^3}\ge (5/2)^{1/2} \frac {\|f_\eps\sigma\ast f_\eps\sigma\ast f_\eps\sigma \|_2}{\|f_\eps\|_2^3}.
\end{equation}

We focus on proving \eqref{eq-77}.
\begin{equation}\label{eq-78}
F_\eps \sigma\ast F_\eps\sigma \ast F_\eps\sigma =2^{-3/2} (f_\eps\sigma+\tilde{f}_\eps\sigma)\ast (f_\eps\sigma+\tilde{f}_\eps\sigma)\ast (f\sigma+\tilde{f}_\eps\sigma).
\end{equation}
Because of the identity \eqref{eq-81},
\begin{equation}\label{eq-79}
\begin{split}
\langle f_\eps\sigma \ast f_\eps\sigma \ast f_\eps\sigma,\,f_\eps\sigma \ast f_\eps\sigma \ast f_\eps\sigma \rangle &=
\langle f_\eps\sigma \ast f_\eps\sigma \ast \tilde{f}_\eps \sigma,\tilde{f}_\eps \sigma \ast f_\eps\sigma \ast f_\eps\sigma \rangle\\
&= \langle f_\eps\sigma \ast \tilde{f}_\eps \sigma \ast \tilde{f}\sigma,\tilde{f}_\eps \sigma \ast \tilde{f}_\eps\sigma \ast f_\eps\sigma \rangle\\
 &=\langle \tilde{f}_\eps \sigma \ast \tilde{f}_\eps \sigma \ast \tilde{f}_\eps \sigma,\tilde{f}_\eps \sigma \ast \tilde{f}_\eps \sigma \ast \tilde{f}_\eps \sigma \rangle.
 \end{split}
\end{equation}So by nonnegativity, we see that
\begin{equation}\label{eq-80}
\|F_\eps\sigma \ast F_\eps \sigma \ast F_\eps\sigma \|_2^2 \ge 2^{-3} (1+9+9+1) \|f_\eps \sigma \ast f_\eps \sigma \ast f_\eps \sigma \|_2^2 =
(5/2)\|f_\eps \sigma \ast f_\eps \sigma \ast f_\eps \sigma \|_2^2,
\end{equation}which yields \eqref{eq-77}.
\end{proof}


\begin{thebibliography}{10}

\bibitem{Begout-Vargas:2007:profile-schrod-higher-d}
P.~B{\'e}gout and A.~Vargas.
\newblock Mass concentration phenomena for the {$L\sp 2$}-critical nonlinear
  {S}chr\"odinger equation.
\newblock {\em Trans. Amer. Math. Soc.}, 359(11):5257--5282, 2007.

\bibitem{Bennett-Bez-Carbery-Hundertmark:2008:heat-flow-of-strichartz-norm}
J.~Bennett, N.~Bez, A.~Carbery, and D.~Hundertmark.
\newblock Heat-flow monotonicity of {S}trichartz norms.
\newblock {\em Analysis and PDE, Vol. 2 (2009), No. 2, 147--158}.

\bibitem{Bennett-Carbery-Christ-Tao-2008-Brascamp-Lieb-inequality}
J.~Bennett, A.~Carbery, M.~Christ, and T.~Tao.
\newblock The {B}rascamp-{L}ieb inequalities: finiteness, structure and
  extremals.
\newblock {\em Geom. Funct. Anal.}, 17(5):1343--1415, 2008.

\bibitem{Bennett-Carbery-Tao:2006:multilinear-restri-kakeya}
J.~Bennett, A.~Carbery, and T.~Tao.
\newblock On the multilinear restriction and {K}akeya conjectures.
\newblock {\em Acta Math.}, 196(2):261--302, 2006.

\bibitem{Bahouri-Gerard:1999:profile-wave}
H.~Bahouri and P.~ G{\'e}rard.
\newblock High frequency approximation of solutions to critical
              nonlinear wave equations.
\newblock {\em Amer. J. Math.}, 121(1):131--175, 1999.

\bibitem{Bourgain:1991:restri-multiplier}
J.~Bourgain.
\newblock On the restriction and multiplier problems in {${\bf R}\sp 3$}.
\newblock In {\em Geometric aspects of functional analysis (1989--90)}, volume
  1469 of {\em Lecture Notes in Math.}, pages 179--191. Springer, Berlin, 1991.
  
\bibitem{Carles-Keraani:2007:profile-schrod-1d}
R.~Carles. and S.~Keraani.
\newblock On the role of quadratic oscillations in nonlinear
              {S}chr\"odinger equations. {II}. {T}he {$L\sp 2$}-critical
              case.
\newblock {\em Trans. Amer. Math. Soc.}, 359(1):33--62, 2007.
  

\bibitem{Christ:extremizer-radon-like-transform}
M.~Christ.
\newblock On extremizers for a {R}adon-like transform.
\newblock {\em Preprint.}

\bibitem{Christ:quasiextremizer-radon-like-transform}
M.~Christ.
\newblock Quasi-extremals for a {R}adon-like transform.
\newblock {\em Preprint.}

\bibitem{Christ-Colliander-Tao:2003:asymptotics-modulation-canonical-defocusin%
g-eqs}
M.~Christ, J.~Colliander, and T.~Tao.
\newblock Asymptotics, frequency modulation, and low regularity ill-posedness
  for canonical defocusing equations.
\newblock {\em Amer. J. Math.}, 125(6):1235--1293, 2003.

\bibitem{Christ-Quilodran:Gaussians-rarely-extremize-non-L2-restriction-parabo%
loids}
M.~Christ and R.~Quilodr\'an.
\newblock Gaussians rarely extremize adjoint {F}ourier restriction inequalities
  for paraboloids.
\newblock {\em  Proc. Amer. Math. Soc.} 142 (3): 887--896, 2014. 

\bibitem{Christ-Shao:extremal-for-sphere-restriction-I-existence}
M.~Christ and S.~Shao.
\newblock Existence of extremals for a {F}ourier restriction inequality.
\newblock {\em Anal.PDE.} 5(2): 261--312, 2012. 

\bibitem{Christ-Shao:extremal-for-sphere-restriction-II-characterizations}
M.~Christ and S.~Shao.
\newblock On the extremizers of an adjoint {F}ourier restriction inequality.
\newblock {\em Adv. Math.} 230 (3): 957--977, 2012.

\bibitem{Fanelli-Vega-Visciglia:2011:maximizers-restriction}
L. ~Fanelli, L.~Vega and N.~Visciglia.
\newblock On the existence of maximizers for a family of restriction
              theorems.
\newblock {\em Bull. Lond. Math. Soc.}, 43(4):811--817, 2011.


\bibitem{Foschi:2007:maxi-strichartz-2d}
D.~Foschi.
\newblock Maximizers for the {S}trichartz inequality.
\newblock {\em J. Eur. Math. Soc. (JEMS)}, 9(4):739--774, 2007.

\bibitem{Foschi:2015:maximizers-2D-sphere}
D.~Foschi.
\newblock Global maximizers for the sphere adjoint {F}ourier restriction
              inequality.
\newblock {\em J. Funct. Anal.}, 268(3):690--702, 2015.

\bibitem{Hundertmark-Zharnitsky:2006:maximizers-Strichartz-low-dimensions}
D.~Hundertmark and V.~Zharnitsky.
\newblock On sharp {S}trichartz inequalities in low dimensions.
\newblock {\em Int. Math. Res. Not.}, pages Art. ID 34080, 18, 2006.

\bibitem{Kunze:2003:maxi-strichartz-1d}
M.~Kunze.
\newblock On the existence of a maximizer for the {S}trichartz inequality.
\newblock {\em Comm. Math. Phys.}, 243(1):137--162, 2003.

\bibitem{Lions-1984-cc-locally-compact-I}
P.-L. Lions.
\newblock The concentration-compactness principle in the calculus of
  variations. {T}he locally compact case. {I}.
\newblock {\em Ann. Inst. H. Poincar\'e Anal. Non Lin\'eaire}, 1(2):109--145,
  1984.

\bibitem{Lions-1984-cc-locally-compact-II}
P.-L. Lions.
\newblock The concentration-compactness principle in the calculus of
  variations. {T}he locally compact case. {II}.
\newblock {\em Ann. Inst. H. Poincar\'e Anal. Non Lin\'eaire}, 1(4):223--283,
  1984.

\bibitem{Lions-1985-cc-limit-case-I}
P.-L. Lions.
\newblock The concentration-compactness principle in the calculus of
  variations. {T}he limit case. {I}.
\newblock {\em Rev. Mat. Iberoamericana}, 1(1):145--201, 1985.

\bibitem{Lions-1985-cc-limit-case-II}
P.-L. Lions.
\newblock The concentration-compactness principle in the calculus of
  variations. {T}he limit case. {II}.
\newblock {\em Rev. Mat. Iberoamericana}, 1(2):45--121, 1985.

\bibitem{Moyua-Vargas-Vega:1999}
A.~Moyua, A.~Vargas, and L.~Vega.
\newblock Restriction theorems and maximal operators related to oscillatory
  integrals in {$\mathbb{R}\sp 3$}.
\newblock {\em Duke Math. J.}, 96(3):547--574, 1999.

\bibitem{Shao:2008:linear-profile-Airy-Maximizer-Airy-Strichartz}
S.~Shao.
\newblock The linear profile decomposition for the {A}iry equation and the
  existence of maximizers for the {A}iry {S}trichartz inequality.
\newblock {\em Analysis and PDE, Vol. 2 (2009), No. 1, 83–-117}.

\bibitem{Shao:2008:maximizers-Strichartz-Sobolev-Strichartz}
S.~Shao.
\newblock Maximizers for the {S}trichartz inequalities and the
  {S}obolev-{S}trichartz inequalities for the {S}chr\"odinger equation.
\newblock {\em Electronic Journal of Differential Equations, Vol. 2009(2009),
  No. 03, pp. 1-13.}

\bibitem{Stein:1993}
E.~M. Stein.
\newblock {\em Harmonic analysis: real-variable methods, orthogonality, and
  oscillatory integrals}, volume~43 of {\em Princeton Mathematical Series}.
\newblock Princeton University Press, Princeton, NJ, 1993.
\newblock With the assistance of Timothy S. Murphy, Monographs in Harmonic
  Analysis, III.

\bibitem{Stovall:quasiextremals-to-convolution-sphere}
B.~Stovall.
\newblock Quasi-extremals for convolution with the surface measure on the
  sphere.
\newblock {\em Illinois J. Math.} 53(2): 391--412, 2009. 


\bibitem{Tao-Vargas-Vega:1998:bilinear-restri-kakeya}
T.~Tao, A.~Vargas, and L.~Vega.
\newblock A bilinear approach to the restriction and {K}akeya conjectures.
\newblock {\em J. Amer. Math. Soc.}, 11(4):967--1000, 1998.

\end{thebibliography}
\end{document}